\numberwithin{equation}{section}
\newcommand{\R}{\mathbb{R}}
\newcommand{\N}{\mathbb{N}}
\newcommand{\E}{\mathbb{E}}
\newcommand{\LL}{\mathcal{L}} % ÐÂÌíŒÓµÄÃüÁî
\newcommand{\D}{\mathcal{D}}
\newcommand{\F}{\mathcal{F}}
\renewcommand{\P}{\mathbb{P}}
\newcommand{\tr}{\operatorname{Tr}}
\newcommand{\dif}[1]{\mathrm{d}#1}%
\newcommand{\diff}[1]{\, \mathrm{d} #1}
\newtheorem{thm}{Theorem}[section]
\newtheorem{den}[thm]{Definition}
\newtheorem{lea}[thm]{Lemma}
\newtheorem{prn}[thm]{Proposition}
\newtheorem{coy}[thm]{Corollary}
\newtheorem{rek}[thm]{Remark}
\newtheorem{exe}[thm]{Example}
\newtheorem{asn}[thm]{Assumption}
\begin{document}

\title{
A full-discrete exponential Euler approximation of the invariant measure for parabolic stochastic partial differential equations\footnotemark[1]
%Space-time full approximation of invariant measure for parabolic stochastic partial differential equations
%Explicitly full approximation of invariant measure for parabolic stochastic partial differential equations
}

\footnotetext{
  \footnotemark[1]This work was supported by NSF of China (11971488, 11571373, 11671405, 91630312),
  NSF of Hunan Province (2020JJ2040, 2018JJ3628)
  and Shenghua Yuying Program of Central South University.
  %and Hunan Provincial Innovation Foundation For Postgraduate (CX2018B051).
             }

\author
    {
        Ziheng Chen$^{\text{a,b}}$, Siqing Gan$^{\text{a}}$, 
        Xiaojie Wang$^{\text{a}}$\footnotemark[2]
        \\
          {\small $^{\text{a}}$School of Mathematics and Statistics,
           Central South University, Changsha 410083, Hunan, China
          }
        \\
          {\small $^{\text{b}}$Institute of Computational Mathematics, Scientific/Engineering Computing,}
          \\
          {\small Academy of Mathematics and Systems Science, Chinese Academy of Sciences, Beijing 100190, China
          }
    }

\footnotetext
    {
    \footnotemark[2]Corresponding author:
               %x.j.wang7@csu.edu.cn,
               x.j.wang7@gmail.com (X. Wang).
    }

\maketitle

\begin{abstract}
  {
        \rm\small
    We discrete the ergodic semilinear stochastic partial differential equations in space dimension $d \leq 3$ with additive noise, spatially by a spectral Galerkin method and temporally by an exponential Euler scheme.
    It is shown that both the spatial semi-discretization and the spatio-temporal full discretization are ergodic. Further, convergence orders of the numerical invariant measures, depending on the regularity of noise, are recovered based on an easy time-independent weak error analysis without relying on Malliavin calculus. To be precise, the convergence order is $1-\epsilon$ in space and $\frac{1}{2}-\epsilon$ in time for the space-time white noise case and $2-\epsilon$ in space and $1-\epsilon$ in time for the trace class noise case in space dimension $d = 1$, with arbitrarily small $\epsilon>0$. Numerical results are finally reported to confirm these theoretical findings.
  }

\vspace{1em}
\textbf{Key words:}
    {\rm\small}
    stochastic partial differential equations,
    invariant measure, ergodicity, weak approximation,
    exponential Euler scheme

\vspace{1em}
\textbf{AMS subject classifications:}
    {
        \rm\small
        60H15, 60H35, 37M25
    }
%35A40:
%37L40:Invariant measure
%37M25:Computational methods for ergodic theory (approximation of
%      invariant measures, computation of Lyapunov exponents, entropy)
%60-08:Computational methods (not classified at a more specific level)
%60H35:Computational methods for stochastic equations [See also 65C30]
%65C30:Stochastic differential and integral equations
%60H15:Stochastic partial differential equations [See also 35R60]
%35R60:Partial differential equations with randomness, stochastic
%      partial differential equations [See also 60H15]
%65P10:Hamiltonian systems including symplectic integrators
\end{abstract}

%\tableofcontents

\section{Introduction}
%we are interested with the long-time behavior of fully discretized scheme of an ergodic semilinear stochastic partial differential equation (SPDE) of the parabolic type, written in an abstract form as a stochastic evolution (in the sense of~\cite{da2014stochastic})
%\cite{xiang2007efficient}

This work concerns the
%long-time behavior of fully discretized
semilinear stochastic partial differential equations (SPDEs)
\begin{equation}
      \label{eq:SPDE}
      \diff{X(t)}
      =
      A X(t) \diff{t}
      +
      F(X(t)) \diff{t}
      +
      \diff{W^{Q}(t)},
      \quad\forall\, t > 0,
      \quad
      X(0) = X_{0},
\end{equation}
%\begin{equation}
%      \label{eq:SPDE}
%\left\{
%\begin{array}{l}
%      \hspace{-0.6em}
%      \diff{X(t)}
%      =
%      A X(t) \diff{t}
%      +
%      F(X(t)) \diff{t}
%      +
%      \diff{W^{Q}(t)},
%      \quad
%      t \in (0,T],
%      \\
%      \hspace{-0.6em}
%      X(0) = X_{0},
%\end{array}
%\right.
%\end{equation}
where the dominant linear operator $A \colon \mathcal{D}(A) \subset H \to H$ generates
a strongly continuous semigroup $E(t) = e^{tA}, t \geq 0$ on a real separable Hilbert
space $(H, \langle \cdot,\cdot \rangle, \| \cdot \|)$ and $F \colon H \to H$ is a nonlinear deterministic mapping.
Moreover, $\{W^{Q}(t)\}_{t \geq0}$ is an $H$-valued (possibly cylindrical) $Q$-Wiener
process on a filtered probability space $(\Omega, \F, \P; \{ \F_{t} \}_{t \geq 0})$,
with the covariance operator $Q$ obeying
\begin{equation}
            \label{eq:intro:AQ-condition}
            \|(-A)^{\frac{\beta-1}{2}}Q^{\frac{1}{2}}\|_{\mathcal{L}_{2}(H)}
            < \infty ,
            \quad\text{for some}~
            \beta \in(0,1].
      \end{equation}
 Such a setting covers both space-time white noise in space dimension $ d = 1$ and trace class noise in multiple space dimension $ d \leq 3$.
%and its assumption specified in Assumptions \ref{ass:main.ass}, \ref{ass:main.ass2} allows us to cover
%equations with both space-time white noise and trace class noise.
%Additionally, the initial data $X_{0} \in H$ is assumed to be deterministic for simplicity (see Section \ref{sec:setting}
%for a precise description of \eqref{eq:SPDE}).
Under Assumptions \ref{ass:main.ass} and \ref{ass:main.ass2} specified later,
% certain assumptions (to be specified later),
a unique mild solution $\{X(t)\}_{t \geq 0}$ of \eqref{eq:SPDE} exists, given by
\begin{equation}
      \label{eq:SIPDE}
            X(t)
      =
            E(t)X_{0}
      +
            \int_{0}^{t}
            E(t-s) F(X(s))
            \diff{s}
      +
            \int_{0}^{t}
            E(t-s)
            \diff{W^{Q}(s)},
       \quad\forall\, t \geq 0,
       \quad\P\text{-a.s.}
\end{equation}
This mild solution $\{X(t)\}_{t \geq 0}$ is shown to be ergodic
(see Section \ref{sec:setting} below for the precise definition of ergodicity), i.e.,
it admits a unique invariant probability measure $\nu$ on $(H,\mathcal{B}(H))$ such that
%for all $\Phi \in C_{b}^{2}(H,\R)$, %$X_{0} \in H$, $T \in (0,\infty)$,
\begin{equation}\label{eq:ergodic.one}
      \lim\limits_{T\to\infty}
      \frac{1}{T}
      \int_{0}^{T} \E\big[\Phi(X(t))\big] \diff{t}
      =
      \int_{H} \Phi(y) \,\nu(\dif{y})
      \quad\text{in}~L^{2}(H,\nu),
      \quad \forall\, \Phi \in L^{2}(H,\nu).
\end{equation}
The ergodicity characterizes the longtime behaviour of the considered equation and
% has been widely used
% plays an important role
has significant impacts on quantum mechanics, fluid dynamics, financial mathematics
and many other scientific fields \cite{da1996ergodicity}.
%The ergodicity characterizes the longtime behaviour of the solutions and it is natural to construct proper numerical schemes inheriting this ergodicity. %%% chen2017approximation
%For many applications, it is interesting to compute the mean of a given function with respect to the invariant law of the diffusion, i.e. the ergodic limit. To evaluate these mean values, one often has to integrate a system over comparatively long time intervals (even if the convergence to ergodic limits is exponential), especially in the case of Langevin equations with small dissipation. This is one of the most serious difficulties from the computational point of view. % G.N. Milstein  M.V. Tretyakov  2007  Computing ergodic limits for Langevin equations
%\textcolor{red}{
In many applications, it is desirable to compute the mean of a given function with respect to the invariant law of the diffusion, i.e., the ergodic limit $\int_{H} \Phi(y) \,\nu(\dif{y})$. To this end, one often has to integrate a system over comparatively long time intervals, which is one of the most serious difficulties from the computational point of view. Moreover, it is usually impossible to exactly simulate the ergodic limit for a nonlinear system since the explicit expression of $\nu$ is rarely available and the support of $\nu$ is an infinite-dimensional space.
This motivates recent interests in
%both theoretical support of existing numerical methods and
developing and analyzing numerical schemes that can inherit the ergodicity of the original system
and can approximate the ergodic limit efficiently.%}
%This has been the motivation for the recent interest in both theoretical support of existing numerical methods and developing new methods or approaches for solving SDEs under nonglobal Lipschitz assumptions on the coefficients.
%As no expression of $\nu$ is available and its support is an infinite-dimensional space, it is difficult to compute $\int_{H} \Phi(y) \nu(\diff{y})$ and thus natural to construct proper numerical schemes which could inherit the ergodicity.
%Since the explicit expressions for the mild solutions of SPDEs are rarely available, numerical schemes inheriting the ergodic property of the continuous system turn out to be very important.
%it is natural to construct uniquely ergodic numerical schemes.

For finite dimensional stochastic differential equations (SDEs),
%various numerical schemes have been proposed to study the problem of approximation of invariant measures
much progress has been made in the design and analysis of approximations of invariant measures
(see,~e.g.,~\cite{talay1990second,mattingly2002ergodicity,
talay2002stochastic,milstein2007computing,
mattingly2010convergence,abdulle2014high,chen2019ergodic,lu2019approximation} and other references therein).
%
%In particular, Mattingly, Stuart and Higham \cite{mattingly2002ergodicity} applied the theory of geometric ergodicity of Markov chains
%to establish some general conditions for the ergodicity of numerical schemes, which will be adopted in Subsection \ref{sec:ergodicity.full}.
%In particular, the authors in \cite{mattingly2002ergodicity} have established some general conditions for the ergodicity of the numerical schemes based on the theory of geometric ergodicity of Markov chains, which will be used in our analysis.
%In particular, some specially constructed implicit schemes have been studied to inherit the ergodicity of SDEs based on the theory of geometric ergodicity of Markov chains has been used to establish the ergodicity for a variety of SDEs in both analytic and numerical aspects. %%CZH
% In [mattingly2002ergodicity], some general conditions are given for the ergodicity of the numerical scheme, thanks to the theory of geometric ergodicity of Markov chains. %% brehier2014approximation @@ P3L2-3
By contrast, approximations of invariant measures for SPDEs are at an early stage and just a very limited number of literature
\cite{brehier2014approximation,brehier2016high,
brehier2017approximation,chen2017approximation,hong2017numerical,hong2019invariantbook}
%% [hong2016convergence]
are devoted to this topic.
%, which is sometimes more interesting in many applications.
%More precisely, Br{\'e}hier studied the temporal semi-discretization by the linear implicit Euler scheme for SPDEs
%with space-time white noise
%\cite{brehier2014approximation} and by the implicit-explicit postprocessed method
%%(including the linear implicit Euler scheme as the implicit part)
%for SPDEs with space-time white noise and trace class noise \cite{brehier2016high}. Moreover, full-discretization has been considered by implicit fully discrete schemes in \cite{brehier2017approximation} for SPDEs with space-time white noise and in \cite{chen2017approximation} for damped Stochastic nonlinear Schr\"{o}dinger (NLS) equations with trace class noise. Additionally, an explicit fully discrete schemes has also been proposed in \cite{hong2017numerical} for stochastic NLS equation. To our knowledge, there has been less work to construct an explicit fully discrete scheme to inherit the unique ergodicity of more general SPDEs like \eqref{eq:SPDE} with space-time white noise and trace class noise.
In 2014, Br{\'e}hier \cite{brehier2014approximation} first studied the temporal semi-discretization by the linear implicit Euler scheme for
semilinear SPDEs of parabolic type driven by additive space-time white noise. To achieve higher order accuracy,
Br{\'e}hier and Vilmart \cite{brehier2016high} further introduced a kind of implicit-explicit postprocessed method for the temporal semi-discretization. In the more recent publication \cite{brehier2017approximation}, Br{\'e}hier and Kopec analyzed spatio-temporal full discretizations by the finite element method and the semi-implicit Euler scheme to approximate invariant measures for the semi-linear SPDEs with additive
space-time white noise.
%\textcolor{red}{
%However, the temporal discretizations in \cite{brehier2014approximation,brehier2016high,brehier2017approximation} are discreted by implicit schemes, requiring additionally computational efforts for their implementation compared with the explicit schemes in general.
Instead of the linear implicit Euler scheme used in \cite{brehier2014approximation,brehier2017approximation},
we turn our attention to an exponential Euler type fully discrete scheme for approximating invariant measures of more general SPDEs \eqref{eq:SPDE}.
%covering both space-time white noise in space dimension $ d = 1$ and trace class noise in multiple space dimension $ d \leq 3$.
It is known that exponential integrators, as explicit time-stepping schemes, are successfully used to solve deterministic stiff problems such as
parabolic partial differential equations and their spatial discretizations (see the
survey article \cite{hochbruck2010exponential} and references therein).
The extension to SPDEs has been extensively studied
in \cite{jentzen2009overcoming,jentzen2011efficient,kloeden2011exponential,lord2013stochastic,wang2015exponential,
tambue2016weak,wang2016weak,anton2017fully,anton2016full,cohen2013trigonometric,cohen2016fully},
where both strong and weak convergence of exponential integrators were well established for SPDEs over finite time intervals.
However, the weak error analysis in infinite horizon for exponential integrators is missing,
which partly motivates this work.
%@@@
Actually,  in the present article we analyze convergence orders of the numerical invariant measures done by the exponential Euler scheme,
based on an easy time-independent weak error analysis without relying on Malliavin calculus,
which is required in the analysis of \cite{brehier2014approximation,brehier2017approximation} for the linear implicit Euler scheme.
From the point of view of computational implementation, we find that both the linear implicit Euler scheme and the exponential Euler scheme
can be explicitly implemented due to a spectral Galerkin spatial discretization here and thus spend essentially the same computational costs. But the exponential Euler scheme is always considerably more accurate than the linear implicit Euler scheme for various noises and time stepsizes, as clearly indicated by the numerical results in Table \ref{tab:temporalweakerrors} of Section \ref{sect:numer-exp}.
We first discrete \eqref{eq:SPDE} in space by a spectral Galerkin method
\begin{equation}\label{eq:SDE.simplify}
%      \left\{
%      \begin{array}{l}
%      \hspace{-0.6em}
            \diff{X^{n}(t)}
            =
            A_{n} X^{n}(t) \diff{t}
            +
            P_{n}F(X^{n}(t)) \diff{t}
            +
            P_{n} \diff{W^{Q}(t)},
            \quad\forall\, t > 0,
            \quad
%            t \in (0,T],
%      \\
%      \hspace{-0.6em}
           X^{n}(0) =P_{n}X_{0},
%      \end{array}
%      \right.
\end{equation}
where $P_{n}$ is a projection operator from $H$ to the finite-dimensional space $H_{n} \subset H, n \in \N$ and $A_{n} :=  A P_{n}$ is a bounded linear operator on $H_{n}$ (see Subsection \ref{sec:discrete.semi} below for precise description).
%\textcolor{red}{
As pointed out in \cite[Chapter 2]{lord2014introduction}, the spectral Galerkin method
%is trivial to compute %at the level of the linear system. The spectral Galerkin approximation
is particularly suitable for simple domains and smooth data and trivial to compute in any situation where the eigenfunctions are explicitly known. Therefore we restrict ourselves to the spectral Galerkin method \eqref{eq:SDE.simplify} but we could also consider the finite element spatial discretization as explained in \cite[Remark 1]{wang2016weak}.
%and performed in \cite{brehier2017approximation}
%%}
Observing that \eqref{eq:SDE.simplify} is a finite-dimensional SDE in $H_{n}$ (or equivalently in $\R^n$), we apply a general ergodicity theory established in \cite{da2006introduction}
to verify the ergodicity of $\{X^{n}(t)\}_{t \geq 0}$, which possesses a unique invariant measure $\nu^{n}$.
%by showing it satisfies the strong Feller property, irreducibility and Lyapunov condition and thus
%possesses a unique invariant measure $\nu^{n}$.
%This treatment is a common way to discrete SPDEs in space and can be seen in \cite{brehier2017approximation} for SPDEs with space-time white noise
%and in \cite{chen2017approximation} for one-dimensional damped stochastic NLS equations with trace class noise.
%The proof of the ergodicity of the numerical approximation
%(i.e.,~the existence and uniqueness of the invariant measure) in \cite{brehier2017approximation} is briefly stated by an application of Krylov-Bogoliubov's criterion and of Doob's theorem \cite{da1996ergodicity}. Here we will use an alternative way to give a detailed proof based on  as well as \cite{chen2017approximation}
Further, we carry out the time-independent weak error analysis, thanks to the uniform boundedness of the mean square moment of $\{X^{n}(t)\}_{t \geq 0}$ and the improved regularity for the associated Kolmogorov equation (see Theorem \ref{th:exist.unique} and Proposition \ref{pro:regularityDvntx} below). Then the ergodicity and the time-independent weak error
of $\{X(t)\}_{t \geq 0}$ and $\{X^{n}(t)\}_{t \geq 0}$ help us to derive the error between $\nu$ and $\nu_{n}$, given by
\begin{equation}
            \label{eq:intro-order.measure.spatial}
            \Big|
            \int_{H} \Phi(y) \,\nu(\dif{y})
            -
            \int_{H_{n}} \Phi(y) \,\nu^{n}(\dif{y})
            \Big|
            \leq
            C\lambda_{n}^{-\beta+\epsilon}.
      \end{equation}
Here  $\epsilon>0$ is arbitrarily small,  $\beta \in (0, 1]$ comes from \eqref{eq:intro:AQ-condition} and $\lambda_{n}$ serves as the $n$-th eigenvalue of the linear operator $- A$.
Given $\tau > 0$ being the uniform time stepsize, the exponential Euler scheme takes the form of
\begin{equation}
      \label{eq:EES.simplify}
      Y_{m+1}^{n}
      =
      E_{n}(\tau)Y_{m}^{n}
      +
      {\tau}E_{n}(\tau)P_{n}F(Y_{m}^{n})
      +
      E_{n}(\tau)P_{n}\Delta{W_{m}^{Q}},
      \quad
      Y_{0}^{n} = X_{0}^{n},
      %m = 0,1 ,2, \cdots,M-1,
\end{equation}
%where
%$
%      E_{n}(\tau)P_{n}\Delta{W_{m}^{Q}}
%  :=
%      \int_{t_{m}}^{t_{m+1}}
%         E_{n}(\tau)P_{n}
%      \diff{W^{Q}(s)}
%$
%is well defined due to $ E_{n}(\tau)P_{n}Q^{\frac{1}{2}} \colon H_{n} \to H_{n} $ is a Hilbert-Schmidt operator.
%It is known that the exponential integrator including linear functional of the noise for SPDEs can achieve higher convergence order in time than the usual implicit Euler scheme and Crank-Nicolson scheme (see, e.g., \cite{jentzen2009numerical,wang2016weak}).
where $Y_{m}^{n}$ is the numerical approximation of $X^{n}(t_m)$.
As one of the key ingredients to guarantee the ergodicity and nice regularity of $\{Y_{m}^{n}\}_{m \in \N}$,
the semigroup operator $E_{n}(\tau) = e^{\tau A_n}$ exhibits an exponentially decreasing property in the sense of $\| E_{n}(\tau) \|_{\mathcal{L}(H_{n})} \leq e^{-\lambda_{1}\tau}$, $\lambda_{1} > 0,\tau > 0$. More formally, we rely on the general ergodicity theory of Markov chain established in \cite{mattingly2002ergodicity} to show the ergodicity of $\{Y_{m}^{n}\}_{m \in \N}$, with a unique invariant measure $\nu_{\tau}^{n}$.
%This in combination with the ergodicity of $\{X^{n}(t)\}_{t \geq 0}$ naturally enables us to convert the error between $\nu_{n}$
%and $\nu_{\tau}^{n}$ into the time-independent weak error between $X^{n}(t)$ and $Y_{m}^{n}$.
Now it remains to do the time-independent weak error analysis of the temporal discretization, which starts from
a weak error representation formula presented in \cite{wang2016weak}.
There the weak error analysis was done on a finite time interval $[0,T]$.
However, weak error estimates here must be time-independent and hold over long time.
%derive the weak error in temporal direction by some elementary arguments instead of the sophisticated theory such as Malliavin calculus
%
%For this purpose, some regularity and approximation results are developed by the improved smoothing property of semigroup.
%and the initial assumption $X_{0} \in \dot{H}^{1}$.
Again, owing to the ergodicity and time-independent weak error of $\{X^{n}(t)\}_{t \geq 0}$ and $\{Y_{m}^{n}\}_{m \in \N}$, the error between $\nu^{n}$ and $\nu_{\tau}^{n}$ can be measured as
\begin{equation}\label{eq:intro-order.measure.space}
\Big|
            \int_{H_{n}} \Phi(y) \,\nu^{n}(\dif{y})
            -
            \int_{H_{n}} \Phi(y) \,\nu_{\tau}^{n}(\dif{y})
            \Big|
            \leq
            C{\tau}^{\beta-\epsilon}.
%            \Big|
%            \int_{H} \Phi(y) \,\nu(\dif{y})
%            -
%            \int_{H_{n}} \Phi(y) \,\nu_{\tau}^{n}(\dif{y})
%            \Big|
%            \leq
%            C
%            (
%            \lambda_{n}^{-\beta+\epsilon}
%            +
%            {\tau}^{\beta-\varepsilon}
%            ).
      \end{equation}
%its convergence order in the temporal direction is $\frac{1}{2}-\epsilon$ for the space-time white noise case and $1-\epsilon$
%for the trace class noise case with arbitrarily small $\epsilon>0$.
Combining this with \eqref{eq:intro-order.measure.spatial} results in the space-time full approximations of invariant measures
(Corollary \ref{Corollary:full-discrete}).
Specializing  \eqref{eq:intro-order.measure.spatial} and \eqref{eq:intro-order.measure.space} into the case of
space dimension $d = 1$, implies that
    the convergence order is $1-\epsilon$ in space and $\frac{1}{2}-\epsilon$ in time for the space-time white noise case
    and $2-\epsilon$ in space and $1-\epsilon$ in time for the trace class noise case, with arbitrarily small $\epsilon>0$.

To conclude, %{\color{red}{
convergence orders of the  numerical invariant measures, depending on the regularity of noise,
are recovered based on an easy time-independent weak error analysis without relying on Malliavin calculus,
which is required in the analysis of \cite{brehier2014approximation,brehier2017approximation} for the linear implicit Euler scheme.
%}}
Furthermore, numerical results reveal that the exponential Euler scheme performs better than the linear implicit Euler scheme.
Finally we mention that one can consult \cite{chen2017approximation,hong2017numerical,hong2019invariantbook} for recent progress on approximations of invariant measures for stochastic nonlinear Schr\"{o}dinger equations and \cite{andersson2016duality,andersson2016weak,
brehier2018kolmogorov,brehier2018weak,conus2019weak,cox2019weak,
debussche2011weak,jentzen2015weak,wang2013weak}
and references therein, for other relevant works on weak approximations over finite time intervals.

%
%Different from the weak error analysis on a finite time interval $[0,T]$, the error constants in our weak estimates are required to
%be independent of the final time $T$.

%However, different from \cite{wang2016weak}, carrying out weak error analysis
%on a finite time interval $[0,T]$, here we are interested in the regime,
%where $T=M\tau$ is arbitrarily large and goes to infinity, and thus require the weak error to be independent of $T$.

%@@

%These findings for the space-time white noise case coincide with those in \cite{brehier2014approximation} for the semi-implicit Euler scheme

The rest of this paper is organized as follows. Some setting and assumptions are collected in the next section.
%Section 3 aims at proving the ergodicity of spatial semi-discretization approximations and analysing the error between invariant measures in spatial direction via the time-independent weak error.
%Section 4 devotes to showing the ergodicity of spatio-temporal full discretization approximations and establishing the error between invariant measures in temporal direction by the time-independent weak error.
Sections 3 and 4 focus on the ergodicity of the numerical approximations for both spatial and temporal discretizations
as well as the error estimates between invariant measures.
Numerical experiments are finally performed to illustrate the theoretical results in Section 5.

\section{Setting and assumptions}
        \label{sec:setting}

Throughout this paper, we need the following notation.
%we use several operator spaces.
%We denote by $\N = \{0,1,2,\cdots\}$ the set of nonnegative integers.
Let $C$ be a generic constant that may vary from one place to another.
Let $\N = \{1,2,\ldots\}$ be the set of positive integers and
$\epsilon > 0$ be an arbitrarily small parameter.
Let $(H, \langle \cdot,\cdot \rangle_{H}, \|\cdot\|_{H})$ and
$(U, \langle \cdot,\cdot \rangle_{U}, \|\cdot\|_{U})$ be two real separable Hilbert spaces.
By $C_{b}^{k}(U,H)$ we denote the space of not necessarily bounded mappings from $U$ to $H$ that have continuous and bounded Fr\'{e}chet derivatives up to order $k$ for $k = 1,2$.
Furthermore, by $\mathcal{L}(U,H)$ we denote the space of all bounded linear operators from $U$ to $H$ with the usual operator norm $\|\cdot\|_{\mathcal{L}(U,H)}$ and write $\mathcal{L}(U) := \mathcal{L}(U,U)$ for simplicity.
Moreover, we need the space of all nuclear operators from $U$ to $H$ denoted by $\mathcal{L}_{1}(U,H)$ and the space of all Hilbert--Schmidt operators from $U$ to $H$ by $\mathcal{L}_{2}(U,H)$.
Analogously, we write $\mathcal{L}_{1}(U) := \mathcal{L}_{1}(U,U)$ and $\mathcal{L}_{2}(U) := \mathcal{L}_{2}(U,U)$.
As usual, $\mathcal{L}_{1}(U)$ and $\mathcal{L}_{2}(U,H)$ are endowed with the nuclear norm $\|\cdot\|_{\mathcal{L}_{1}(U)}$ and the Hilbert--Schmidt norm $\|\cdot\|_{\mathcal{L}_{2}(U,H)}$, respectively,
\begin{equation}\label{eq:twonorms}
%      \|\Gamma\|_{\mathcal{L}_{1}(U)}
%      =
%      \tr\hat{\Gamma}
%      =
%      \sum\limits_{i=1}^{\infty}
%      \langle \hat{\Gamma}\psi_{i},\psi_{i} \rangle,
%      \quad
%      \|\Gamma\|_{\mathcal{L}_{2}(U,H)}
      \|\Gamma_{1}\|_{\mathcal{L}_{1}(U)}
       =
      \sum\limits_{i=1}^{\infty}
      \langle \Gamma_{1}\psi_{i},\psi_{i} \rangle,
      \quad
      \|\Gamma_{2}\|_{\mathcal{L}_{2}(U,H)}
      =
      \Big(
      \sum\limits_{i=1}^{\infty}
      \| \Gamma_{2}\psi_{i} \|_{H}^{2}
      \Big)^{\frac{1}{2}}
\end{equation}
for any $\Gamma_{1} \in \mathcal{L}_{1}(U)$ and $\Gamma_{2} \in \mathcal{L}_{2}(U,H)$.
%where $\hat{\Gamma} = (\Gamma^{*}\Gamma)^{\frac{1}{2}}$
%and $\Gamma^{*}$ denotes the adjoint operator of $\Gamma$.
Additionally, the norms defined in \eqref{eq:twonorms} do not depend on the particular choice of the orthonormal basis $\{\psi_{i}\}_{i \in \N}$ of $U$, see \cite[Appendix~C]{da1992stochastic}.
For the convenience of the following analysis, we list some norm inequalities, see \cite[Appendix~B]{prevot2007concise}.
If $\Gamma_{1} \in \mathcal{L}_{1}(U)$ and
$\Gamma_{2} \in \mathcal{L}(U)$, then
$\Gamma_{1}^{*} \in \mathcal{L}_{1}(U)$,
$\Gamma_{1}\Gamma_{2} \in \mathcal{L}_{1}(U)$,
$\Gamma_{2}\Gamma_{1} \in \mathcal{L}_{1}(U)$ and
\begin{equation}
      \label{eq:.operator.Gamma.one}
      |\tr\Gamma_{1}| \leq \|\Gamma_{1}\|_{\mathcal{L}_{1}(U)},
      \quad
      \tr(\Gamma_{1}^{*}) = \tr(\Gamma_{1}),
      \quad
      \tr(\Gamma_{1}\Gamma_{2}) = \tr(\Gamma_{2}\Gamma_{1}).
\end{equation}
When $\Gamma_{1} \in \mathcal{L}_{2}(U,H)$ and
$\Gamma_{2} \in \mathcal{L}_{2}(H,U)$, it holds that
$\Gamma_{1}^{*} \in \mathcal{L}_{2}(H,U)$,
$\Gamma_{1}\Gamma_{2} \in \mathcal{L}_{1}(H)$ and
\begin{equation}
      \label{eq:.operator.Gamma.two}
      \|\Gamma_{1}^{*}\|_{\mathcal{L}_{2}(H,U)}
      =
      \|\Gamma_{1}\|_{\mathcal{L}_{2}(U,H)},
      \quad
      \|\Gamma_{1}\Gamma_{2}\|_{\mathcal{L}_{1}(H)}
      \leq
      \|\Gamma_{1}\|_{\mathcal{L}_{2}(U,H)}
      \|\Gamma_{2}\|_{\mathcal{L}_{2}(H,U)}.
\end{equation}
For $\Gamma \in \mathcal{L}(U,H)$ and
$\Gamma_{j} \in \mathcal{L}_{j}(U), j = 1,2$,
we have $\Gamma\Gamma_{j} \in \mathcal{L}_{j}(U,H)$ and
\begin{equation}
      \label{eq:.operator.Gamma.three}
      \|\Gamma\Gamma_{j}\|_{\mathcal{L}_{j}(U,H)}
      \leq
      \|\Gamma\|_{\mathcal{L}(U,H)}
      \|\Gamma_{j}\|_{\mathcal{L}_{j}(U)},~~j = 1,2.
\end{equation}

%To guarantee a unique mild solution of \eqref{eq:SPDE} and for the purpose of the following convergence and ergodicity analysis, we make assumptions as follows.
To proceed, we need some assumptions.

\begin{asn}
      \label{ass:main.ass}
      Let $A \colon \D(A) \subset H \to H$ be a densely defined, self-adjoint, negative definite linear operator, which is not necessarily bounded but with compact inverse.
\end{asn}
In the above setting, the dominant linear operator $A$ generates a strongly continuous semigroup of contractions $E(t) = e^{tA}, t \geq 0$ on $H$ and
there exists an increasing sequence of real numbers
$\{\lambda_{i}\}_{i \in \N}$ and an orthonormal basis
$\{e_{i}\}_{i \in \N}$ of $H$ such that
%$
\begin{equation}\label{eigenpairs}
-Ae_{i} = \lambda_{i}e_{i},
\quad
\forall \, i \in \N
%$
\quad
\mbox{ with }
\quad
%$
    0 < \lambda_{1} \leq \lambda_{2}
    \leq \cdots \leq \lambda_{n}
    \to \infty
    ~\text{as}~ n \to \infty.
%$
\end{equation}
%for all $i \in \N$.
This allows us to define fractional powers of $-A$, i.e., $(-A)^{\gamma}, \gamma \in \R$, in a much simple way, see \cite[Appendix~B.2]{kruse2014strong}.
So we introduce the Hilbert space
     $
         \dot{H}^{\gamma}
     =
         \mathcal{D}((-A)^{\frac{\gamma}{2}})
     $
for every $\gamma \in \R$, equipped with the inner product
     $
         \langle \varphi,\psi \rangle_{\dot{H}^{\gamma}}
     =
         \big\langle
             (-A)^{\frac{\gamma}{2}}\varphi
             ,
             (-A)^{\frac{\gamma}{2}}\psi
         \big\rangle
     =
         \sum_{i=1}^{\infty}
         \lambda_{i}^{\gamma}
         \langle \varphi,e_{i} \rangle
         \langle \psi,e_{i} \rangle
     $
and the corresponding norm
    $
         \|\varphi\|_{\gamma}
    =
         \sqrt{\langle \varphi,\varphi \rangle_{\dot{H}^{\gamma}}}
    $
for all $\varphi,\psi \in \dot{H}^{\gamma}$.
The next lemma gives some smoothing properties of semigroup $\{E(t)\}_{t \geq 0}$, see \cite[Proposition 2.4]{brehier2014approximation} and \cite[Proposition 2.6]{brehier2017approximation} for similar results. Since we will make use of them very frequently, we present a proof, but only under the above assumption on $A$.
\begin{lea}\label{lea:semigroupsmooth}
	Suppose that Assumption \ref{ass:main.ass} holds. Then there exists $C>0$ such that
	\begin{equation}\label{lem:semigroup.smooth}
	\begin{split}
	\|(-A)^{\gamma}E(t)\|_{\mathcal{L}(H)}
	\leq&
	Ct^{-\gamma}e^{-\frac{\lambda_{1}}{2}t},
	\quad\forall\,
	t > 0,\gamma \geq 0,
	\\
	\|(-A)^{-\rho}(E(t) - E(s))\|_{\mathcal{L}(H)}
	\leq&
	C(t-s)^{\rho}e^{-\frac{\lambda_{1}}{2}s},
	\quad\forall\,
	0 \leq s < t , \rho \in [0,1].
	%      \\
	%      \|E(t) - E(s)\|_{\mathcal{L}(H)}
	%      \leq&
	%      C\frac{(t-s)^{\rho}}{s^{\rho}}
	%      e^{-\frac{\lambda_{1}}{2}s}
	%      \quad
	%      0 < s < t , \rho \in [0,1].
	\end{split}
	\end{equation}
\end{lea}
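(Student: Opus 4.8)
The plan is to use the spectral decomposition of $A$ directly, since Assumption \ref{ass:main.ass} gives us the eigenpairs \eqref{eigenpairs} with $0 < \lambda_1 \le \lambda_2 \le \cdots$. For the first estimate, I would fix $\varphi \in H$, write $\varphi = \sum_{i} \langle \varphi, e_i \rangle e_i$, and compute
\begin{equation*}
\|(-A)^{\gamma} E(t) \varphi\|^2
=
\sum_{i=1}^{\infty} \lambda_i^{2\gamma} e^{-2\lambda_i t} |\langle \varphi, e_i \rangle|^2
\le
\Big(\sup_{\lambda \ge \lambda_1} \lambda^{2\gamma} e^{-2\lambda t}\Big) \|\varphi\|^2 .
\end{equation*}
The key is then the scalar bound: for $\gamma \ge 0$ and $t > 0$, split $e^{-2\lambda t} = e^{-\lambda t} e^{-\lambda t}$ and apply the elementary inequality $x^{\gamma} e^{-x} \le C_\gamma$ for $x \ge 0$ (i.e. $C_\gamma = (\gamma/e)^\gamma$) with $x = \lambda t$, which yields $\lambda^{2\gamma} e^{-2\lambda t} \le C_\gamma t^{-2\gamma} e^{-2\lambda t} \le C_\gamma t^{-2\gamma} e^{-\lambda_1 t}$ after using $\lambda \ge \lambda_1$ on the remaining factor. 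Taking square roots gives the first line with $C = \sqrt{C_\gamma}$; the case $\gamma = 0$ is just the contraction property $\|E(t)\|_{\mathcal{L}(H)} \le e^{-\lambda_1 t} \le e^{-\lambda_1 t/2}$.

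For the second estimate, I would again work componentwise. Writing $E(t) - E(s) = E(s)(E(t-s) - I)$, we get for $\varphi \in H$
\begin{equation*}
\|(-A)^{-\rho}(E(t) - E(s))\varphi\|^2
=
\sum_{i=1}^{\infty} \lambda_i^{-2\rho} e^{-2\lambda_i s} \big(1 - e^{-\lambda_i(t-s)}\big)^2 |\langle \varphi, e_i \rangle|^2 .
\end{equation*}
For $\rho \in [0,1]$ the controlling scalar factor is $\lambda^{-2\rho} e^{-2\lambda s} (1 - e^{-\lambda(t-s)})^2$. Here I would use the standard bound $1 - e^{-x} \le x^{\rho}$ for $x \ge 0$ and $\rho \in [0,1]$ (which follows since $1 - e^{-x} \le \min\{1, x\} \le x^\rho$), applied with $x = \lambda(t-s)$, to obtain $\lambda^{-2\rho}(1 - e^{-\lambda(t-s)})^2 \le (t-s)^{2\rho}$. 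The remaining factor $e^{-2\lambda s} \le e^{-\lambda_1 s} \le e^{-\lambda_1 s/2}$. Hence the sum is bounded by $(t-s)^{2\rho} e^{-\lambda_1 s} \|\varphi\|^2$, and taking square roots finishes the proof (one may also use $e^{-2\lambda s} \le e^{-\lambda_1 s}$ directly; the factor $\tfrac12$ in the exponent is kept only for uniformity with the first estimate).

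I do not expect a genuine obstacle here: everything reduces to the two elementary one-variable inequalities $\sup_{x \ge 0} x^\gamma e^{-x} < \infty$ and $1 - e^{-x} \le x^\rho$ for $\rho \in [0,1]$, combined with Parseval's identity. The only point requiring a little care is bookkeeping the exponential decay — making sure that after extracting the polynomial factor $t^{-\gamma}$ (resp. $(t-s)^\rho$) there is still enough of $e^{-\lambda t}$ (resp. $e^{-2\lambda s}$) left to produce the clean $e^{-\lambda_1 t/2}$ (resp. $e^{-\lambda_1 s/2}$) bound uniformly in $i$; using $\lambda_i \ge \lambda_1$ at the right moment handles this.
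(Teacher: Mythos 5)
Your proof is correct and takes essentially the same approach as the paper: both reduce the operator norms, via the spectral decomposition of $A$, to the elementary scalar bounds $\sup_{x\geq 0}x^{\gamma}e^{-x}<\infty$ and $1-e^{-x}\leq x^{\rho}$ for $\rho\in[0,1]$, reserving part of the exponential factor and using $\lambda_i\geq\lambda_1$ to retain the decay $e^{-\lambda_1 t/2}$ (resp.\ $e^{-\lambda_1 s/2}$). One cosmetic slip: after applying the elementary inequality to one copy of $e^{-\lambda t}$, the remaining factor in your intermediate display should be $e^{-\lambda t}$ rather than $e^{-2\lambda t}$ (and the constant is $C_{2\gamma}$ rather than $C_{\gamma}$), but the final estimate is unaffected.
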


%\textcolor{red}{
\begin{proof}
	For the first part of \eqref{lem:semigroup.smooth}, we use \eqref{eigenpairs} and the spectral mapping theorem \cite[Section 3.2]{sell2002dynamics} to show
	\begin{equation*}
		\|(-A)^{\gamma}E(t)\|_{\mathcal{L}(H)}
		=
		\sup\limits_{i \in \N}\|(-A)^{\gamma}E(t)e_{i}\|
		=
        \sup\limits_{i \in \N}\lambda_{i}^{\gamma}e^{-\lambda_{i}t}
		\leq
        2^{\gamma}t^{-\gamma}e^{-\frac{\lambda_{1}}{2}t}\sup\limits_{i \in \N}
        \big(\tfrac{\lambda_{i}t}{2}\big)^{\gamma}e^{-\frac{\lambda_{i}t}{2}},
	\end{equation*}
	which leads to the desired result thanks to the fact that the function $x \mapsto x^{\gamma}e^{-x}$ is bounded for all $x \geq 0$ and $\gamma \geq 0$.
	It remains to verify the second part of \eqref{lem:semigroup.smooth}. Repeating the previous techniques yields
	\begin{equation*}
	\begin{split}
		&\|(-A)^{-\rho}(E(t) - E(s))\|_{\mathcal{L}(H)}
		=
		\sup\limits_{i \in \N}
		\|(-A)^{-\rho}(E(t) - E(s))e_{i}\|
%	    =
%		\sup\limits_{i \in \N}
%		\lambda_{i}^{-\rho}(e^{-\lambda_{i}s} - e^{-\lambda_{i}t})
		\\=&
		\sup\limits_{i \in \N}
		\lambda_{i}^{-\rho}(1 - e^{-\lambda_{i}(t-s)})e^{-\lambda_{i}s}
		\leq
		e^{-\frac{\lambda_{1}}{2}s}\sup\limits_{i \in \N}
        \lambda_{i}^{-\rho}(1 - e^{-\lambda_{i}(t-s)})e^{-\frac{\lambda_{i}}{2}s}
		\\\leq&
        (t-s)^{\rho}e^{-\frac{\lambda_{1}}{2}s}\sup\limits_{i \in \N}
        (\lambda_{i}(t-s))^{-\rho}(1 - e^{-\lambda_{i}(t-s)}).
    \end{split}
	\end{equation*}
	By the boundedness of the function $x \mapsto x^{-\rho}(1-e^{-x})$ for all $x\geq0$, $\rho \in [0,1]$, we complete the proof.
\end{proof}%}

%
%To guarantee a unique mild solution of~\eqref{eq:SPDE} in $H$ and for the purpose of the following weak convergence analysis, we further need the following assumptions.
%
\begin{asn}
\label{ass:main.ass2}
      Let $\{W^{Q}(t)\}_{t \geq 0}$ be a cylindrical $Q$-Wiener process on a filtred probability space $(\Omega, \F, \P$;
      $\{ \F_{t} \}_{t \geq 0})$ with $Q \colon H \to H$ being a self-adjoint, positive definite bounded linear operator.
      Furthermore, let $A$ and $Q$ be commutable and satisfy
      \begin{equation}
            \label{eq:A.and.Q}
            \|(-A)^{\frac{\beta-1}{2}}Q^{\frac{1}{2}}\|_{\mathcal{L}_{2}(H)}
            < \infty ,
            \quad\text{for some}~
            \beta \in(0,1].
      \end{equation}
      In addition, let the initial data $X_{0} \in \dot{H}^{\max( 2 \beta, 1 )}$ be deterministic.
      Let the nonlinear mapping $F \colon H \to H$ satisfy a one-sided Lipschitz condition
      %\textcolor{red}{
      \begin{equation}\label{eq:disipativity}
%            L_F
%            :=
%            \sup\limits_{\varphi,\psi \in H: \varphi \neq \psi}
%            \frac{\|F(\varphi)-F(\psi)\|}{\|\varphi-\psi\|}
%            <
%            \lambda_1,
            \langle \varphi_{1}-\varphi_{2},F(\varphi_{1})-F(\varphi_{2}) \rangle
            \leq
            L_{F}\|\varphi_{1}-\varphi_{2}\|^{2},
            \quad\text{with}\quad
            L_{F} < \lambda_{1},
            \quad
            \forall\,\varphi_{1},\varphi_{2} \in H,
      \end{equation}
      %}
      where $\lambda_{1}$ is the smallest eigenvalue of $-A$. Finally, let $F$ be twice differentiable and there exist $\delta \in [1,2)$ and $\eta \in[0,1)$ such that
      \begin{align}
            \label{eq:F.derivative.one}
            \|F'(\varphi)\psi\|
            \leq&
            L\|\psi\|,\quad\forall\, \varphi,\psi \in H,
            \\
            \label{eq:F.derivative.one.delta}
            \|(-A)^{-\frac{\delta}{2}}F'(\varphi)\psi\|
            \leq&
            L(1+\|\varphi\|_{1})\|\psi\|_{-1},
            \quad\forall\, \varphi \in \dot{H}^{1}, \psi \in H,
            \\
            \label{eq:F.derivative.second}
            \|(-A)^{-\eta}F''(\varphi)(\psi_{1},\psi_{2})\|
            \leq&
            L\|\psi_{1}\|\|\psi_{2}\|,
            \quad\forall\, \varphi,\psi_{1},\psi_{2} \in H.
      \end{align}
\end{asn}

%\begin{rek}
%\textcolor{blue}{
%  Note that using condition \eqref{eq:F.derivative.one} and the mean-value formula \cite[Appendix C]{sell2002dynamics}, one can easily verify that $F$ is global Lipschitz continuous, which leads to the one-sided Lipschitz condition \eqref{eq:disipativity} with the one-sided Lipschitz constant $L$. Compared with the following condition
%$$
%            \text{Lip}_F
%            :=
%            \sup\limits_{\varphi_{1},\varphi_{2} \in H: \varphi_{1} \neq \varphi_{2}}
%            \frac{\|F(\varphi_{1})-F(\varphi_{2})\|}{\|\varphi_{1}-\varphi_{2}\|}
%            <
%            \lambda_1
%$$
%as required in \cite{brehier2016high,brehier2017approximation}, the combination of \eqref{eq:disipativity} and \eqref{eq:F.derivative.one} enables our setting to include a class of dissipative nonlinearity $F$ with negative one-sided Lipschitz constant $\text{L}_F$ satisfying $|\text{L}_F| \geq \lambda_1$, which has not been considered by \cite{brehier2016high,brehier2017approximation}. The key difference is that to ensure the ergodicity of \eqref{eq:SPDE} we assume the one-sided Lipschitz constant $\text{L}_F < \lambda_1$ instead of requiring the global Lipschitz constant $\text{Lip}_F < \lambda_1$.}
%\end{rek}

\begin{rek}
	%\textcolor{blue}{
		We would like to mention that \eqref{eq:F.derivative.one} implies a globally Lipschitz condition on $F$
		and that one could simply use
		\eqref{eq:F.derivative.one}
%		and the mean-value formula \cite[Appendix C]{sell2002dynamics},
		to obtain a one-sided Lipschitz condition like \eqref{eq:disipativity} but with a different one-sided Lipschitz constant $L$, i.e.,
		\begin{equation*}
           \langle \varphi_{1}-\varphi_{2},F(\varphi_{1})-F(\varphi_{2}) \rangle
            \leq
            L\|\varphi_{1}-\varphi_{2}\|^{2},
               \quad
            \forall\,\varphi_{1},\varphi_{2} \in H,
		\end{equation*}
instead of formulating \eqref{eq:disipativity} additionally.
In this way one would require $L < \lambda_1$, to ensure \eqref{eq:dissipativity.corollary} below, which in turn promises
the ergodicity of \eqref{eq:SIPDE}. Such a restriction, as required by \cite{brehier2016high,brehier2017approximation},
turns to be stricter than $L_F < \lambda_1$ because for some nonlinear mappings
the Lipschitz constant $L$ in \eqref{eq:F.derivative.one} might be very large
but the one-sided Lipschitz constant $L_F$
coming from \eqref{eq:disipativity} could be small (even negative).
%than $L_F < \lambda_1$, where $L_F$ comes from \eqref{eq:disipativity}.
%
%%%%%%%%%%%%%
%		As indicated in the derivation of \eqref{eq:dissipativity.corollary} below, \eqref{eq:disipativity} is used to guarantee the ergodicity of \eqref{eq:SIPDE}
%		and is relaxed than the following condition
%		$$
%		\text{Lip}_F
%		:=
%		\sup\limits_{\varphi_{1},\varphi_{2} \in H: \varphi_{1} \neq \varphi_{2}}
%		\frac{\|F(\varphi_{1})-F(\varphi_{2})\|}{\|\varphi_{1}-\varphi_{2}\|}
%		<
%		\lambda_1
%		$$
%		as performed in \cite{brehier2016high,brehier2017approximation},
%             which makes strong limitation on nonlinearity $F$ and excludes
%
For example, Nemytskii operators $F(\varphi)(\cdot) = f(\varphi(\cdot))$ with $f(x) = - \vartheta x$ for $\vartheta > \lambda_1$
satisfy \eqref{eq:F.derivative.one} and \eqref{eq:disipativity} with $L = \vartheta > \lambda_1$
and $L_F = - \vartheta < 0 < \lambda_1$, respectively.
%Of course, such nonlinear functions also exist like
%$f(x) =  \frac{ - \vartheta x}{1+x^2}$ for $\vartheta > \lambda_1 +1$.
To sum up, the condition \eqref{eq:disipativity} is used to relax the restriction for the ergodicity and the conditions \eqref{eq:F.derivative.one}--\eqref{eq:F.derivative.second} are required in the following longtime weak error analysis.
%partly performed by adopting some techniques used in \cite{wang2016weak}
%as the results therein only hold on finite time interval and do not work here any more.
%To deal with the rest parts, we need to develop some other new strategies and results,
%especially for the analysis of ergodicity below.
		%}
\end{rek}

It is well-known that $\{W^{Q}(t)\}_{t \geq 0}$ can be represented as
      \begin{equation}\label{eq:W.Q.t}
            W^{Q}(t)
            =
            \sum\limits_{i=1}^{\infty}
            \sqrt{q_{i}}\beta_{i}(t)e_{i},
            \quad\forall\,t \geq 0,
      \end{equation}
where $\{\beta_{i}(t)\}_{t \geq 0}$ for
$i \in \{n \in \N \colon q_{n} > 0\}$
are independent real-valued Brownian motions on $(\Omega, \F, \P)$ with respect to the filtration $\{ \F_{t} \}_{t \geq 0}$.
A class of semilinear stochastic heat equations satisfying the above assumptions can be found in \cite[Example~3.2]{wang2016weak}.
Moreover, under Assumptions \ref{ass:main.ass} and \ref{ass:main.ass2}, \eqref{eq:SPDE} admits a unique mild solution, see \cite[Theorem~5.3.1]{da1996ergodicity}.
\begin{thm}[\textbf{Existence, uniqueness of mild solution}]
\label{th:EueSPDEs}
      Suppose that Assumptions \ref{ass:main.ass} and \ref{ass:main.ass2} hold. Then  \eqref{eq:SPDE} admits a unique mild solution $\{X(t)\}_{t\geq0}$ given by \eqref{eq:SIPDE}.
\end{thm}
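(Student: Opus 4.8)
The plan is to realize $\{X(t)\}_{t\ge0}$ as the unique fixed point of the integral map associated with \eqref{eq:SIPDE}, via the classical Banach fixed-point scheme for semilinear SPDEs with additive noise; the only hypotheses needing verification are that the stochastic convolution is a well-defined $H$-valued process with a continuous modification, and that $F$ is globally Lipschitz so that the map becomes a contraction after a suitable time-weighting. Alternatively one may simply invoke \cite[Theorem~5.3.1]{da1996ergodicity} once these two points are checked; since they are short, I would include them.

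First I would treat the stochastic convolution $\mathcal{Z}(t):=\int_{0}^{t}E(t-s)\,\dif{W^{Q}(s)}$. By the It\^{o} isometry it defines an $\{\F_{t}\}$-adapted, $H$-valued, mean-square continuous process as soon as $\int_{0}^{t}\|E(t-s)Q^{\frac12}\|_{\mathcal{L}_{2}(H)}^{2}\diff{s}<\infty$. Since $E(t-s)$ commutes with fractional powers of $-A$, I would write $E(t-s)Q^{\frac12}=(-A)^{\frac{1-\beta}{2}}E(t-s)\cdot(-A)^{\frac{\beta-1}{2}}Q^{\frac12}$, apply \eqref{eq:.operator.Gamma.three}, the smoothing bound $\|(-A)^{\frac{1-\beta}{2}}E(t-s)\|_{\mathcal{L}(H)}\le C(t-s)^{-\frac{1-\beta}{2}}e^{-\frac{\lambda_{1}}{2}(t-s)}$ from Lemma \ref{lea:semigroupsmooth}, and \eqref{eq:A.and.Q}, to get
\[
\int_{0}^{t}\|E(t-s)Q^{\frac12}\|_{\mathcal{L}_{2}(H)}^{2}\diff{s}\le C\,\|(-A)^{\frac{\beta-1}{2}}Q^{\frac12}\|_{\mathcal{L}_{2}(H)}^{2}\int_{0}^{t}(t-s)^{-(1-\beta)}e^{-\lambda_{1}(t-s)}\diff{s},
\]
and the last integral is bounded by $\lambda_{1}^{-\beta}\Gamma(\beta)<\infty$ because $\beta\in(0,1]$. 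A continuous modification of $\mathcal{Z}$ then follows from the factorization method (or a Kolmogorov continuity argument), using the same smoothing estimate.

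Next I would recast \eqref{eq:SIPDE} as the fixed-point equation $Y=\Gamma(Y)$ with $\Gamma(Y)(t)=E(t)X_{0}+\int_{0}^{t}E(t-s)F(Y(s))\diff{s}+\mathcal{Z}(t)$, on the Banach space of $\{\F_{t}\}$-adapted, mean-square continuous processes $Y\colon[0,T]\to L^{2}(\Omega;H)$ with norm $\sup_{t\in[0,T]}e^{-\rho t}(\E\|Y(t)\|^{2})^{1/2}$ for a parameter $\rho>0$. The first term is finite and continuous by strong continuity of the semigroup and $X_{0}\in H$, and the third by the previous step. For the nonlinear term, \eqref{eq:F.derivative.one} yields the global Lipschitz bound $\|F(\varphi_{1})-F(\varphi_{2})\|\le L\|\varphi_{1}-\varphi_{2}\|$, so using $\|E(t)\|_{\mathcal{L}(H)}\le1$ one obtains, in $L^{2}(\Omega;H)$, $e^{-\rho t}\big\|\int_{0}^{t}E(t-s)\big(F(Y_{1}(s))-F(Y_{2}(s))\big)\diff{s}\big\|\le L\rho^{-1}\sup_{s}e^{-\rho s}\|Y_{1}(s)-Y_{2}(s)\|$; choosing $\rho>L$ makes $\Gamma$ a strict contraction, giving a unique fixed point on every $[0,T]$, and since all constants are $T$-independent these patch together into a unique $\{X(t)\}_{t\ge0}$ solving \eqref{eq:SIPDE}. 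Path continuity of $X$ follows from that of $t\mapsto E(t)X_{0}$, of the deterministic convolution, and of $\mathcal{Z}$.

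The only genuinely non-mechanical point is the integrability and continuous-modification claim for the stochastic convolution, i.e. extracting precisely the right power from \eqref{eq:A.and.Q}: the hypothesis $\beta\in(0,1]$ is exactly what keeps the singularity $(t-s)^{-(1-\beta)}$ integrable, the borderline case $\beta\downarrow0$ (space--time white noise in $d=1$) being the tightest. Everything else — the contraction estimate and the extension to the infinite horizon — is standard once $F$ is recognized as globally Lipschitz through \eqref{eq:F.derivative.one}; the one-sided condition \eqref{eq:disipativity} and the higher regularity $X_{0}\in\dot{H}^{\max(2\beta,1)}$ are not needed here and are reserved for the ergodicity and moment/regularity estimates later.
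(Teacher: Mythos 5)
Your argument is correct and is essentially the same route the paper takes: the paper offers no proof of its own but simply invokes \cite[Theorem~5.3.1]{da1996ergodicity}, and your verification of the stochastic convolution via \eqref{eq:A.and.Q} together with the weighted-norm Banach fixed-point argument for the globally Lipschitz $F$ is exactly the standard proof underlying that citation. Your observation that \eqref{eq:disipativity} and the regularity of $X_{0}$ are not needed at this stage is also accurate.
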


%We begin with some results to show $\{X^{n}(t)\}_{t\geq0}$  being ergodic.
In the sequel we will introduce some concepts related to the ergodicity of $\{X(t)\}_{t\geq0}$.
By $B_b(H)$ (resp. $C_b(H)$) we denote the Banach space of all Borel bounded mappings (resp. continuous and bounded mappings) $\Phi \colon H \to \R$ endowed with the norm $\|\Phi\|_0 = \sup_{x \in H} |\Phi(x)|$. With this, we define the transition semigroup $P \colon [0,\infty) \to \LL(B_b(H))$ by
$$
  P_t\Phi(x) = \E\big[\Phi(X(t,x))\big],
  \quad
  \forall\, \Phi \in B_b(H),
$$
where $X(t,x)$ is the mild solution of \eqref{eq:SPDE} with $X(0) = x \in H$. Then it is easy to check that $\{P_t\}_{t \geq 0}$ is a Markov semigroup on $B_b(H)$, see\cite[Definition 5.1]{da2006introduction} for the precise definition of Markov semigroup.

Let us give some definitions related to $\{P_t\}_{t \geq 0}$. $\{P_t\}_{t \geq 0}$ is said to be strong Feller if $P_{t} \Phi \in C_b(H)$ for any $\Phi \in B_b(H)$ and any $t > 0$. Also, $\{P_t\}_{t \geq 0}$ is said to be irreducible if $P_{t} 1_{B(x_0,r)}(x) > 0$ for any $x,x_0 \in H,r>0$ and any $t > 0$, where $B(x_0,r)$ is the open ball in $H$ with center $x_0$ and radius $r>0$. Moreover, a probability measure $\mu$ on $(H,\mathcal{B}(H))$ is said to be invariant for $\{P_t\}_{t \geq 0}$ if
$$
  \int_{H} P_t \Phi \diff{\mu}
  =
  \int_{H} \Phi \diff{\mu},
  \quad
  \forall\, \Phi \in B_b(H), t \geq 0.
$$
%Here we assume that $\{ P_{t} \}_{ t \geq 0 }$ represents a Markov semigroup on a Hilbert space $V$ with an invariant measure $\mu$.
According to the Von Neumann mean ergodic theorem \cite[Theorem 5.12]{da2006introduction}, the limit
$$
      \lim\limits_{T \to \infty}
      \frac{1}{T}
      \int_{0}^{T} P_{t} \Phi \diff{t},
      \quad
      \forall\, \Phi \in L^{2}(H,\mu)
$$
always exists in $L^{2}(H,\mu)$, where $L^{2}(H,\mu)$ is the space of all square integrable functions $\Phi \colon H \to \R$ with respect to $\mu$.

\begin{den}
Let $\mu$ be an invariant probability measure for $\{P_t\}_{t \geq 0}$.
We say that $\{P_t\}_{t \geq 0}$ is ergodic if
\begin{equation*}
      \label{eq:ergodic}
      \lim\limits_{T \to \infty}
      \frac{1}{T}
      \int_{0}^{T} P_{t} \Phi \diff{t}
  =
      \int_{H} \Phi(y) \,\mu(\dif{y})
  \quad
       \text{in~}
       L^{2}(H,\mu),
  \quad\forall\, \Phi \in L^{2}(H,\mu).
\end{equation*}
%for all $\Phi \in L^{2}(H,\mu)$.
\end{den}

Now we say that the stochastic process $\{X(t,x)\}_{t\geq0}$ is ergodic if the associated Markov semigroup $\{P_t\}_{t \geq 0}$ is ergodic.
%Moreover, it is also ergodic with a unique invariant measure $\nu$ such that \eqref{eq:ergodic.one} holds.
%\textcolor{red}{
For any $\varphi \in \mathcal{D}(A) \subset H$, we can use \eqref{eigenpairs} and \eqref{eq:disipativity} to derive
\begin{equation*}
\begin{split}
      \langle A\varphi + F(\varphi),\varphi \rangle
%      =
%      \langle A\varphi,\varphi \rangle
%       +
%      \langle F(\varphi),\varphi \rangle
      =&
      \Big\langle A\sum\limits_{i=1}^{\infty} \langle \varphi,e_i \rangle e_i,\sum\limits_{j=1}^{\infty} \langle \varphi,e_j \rangle e_j \Big\rangle
      +
      \langle F(\varphi)-F(0),\varphi \rangle
      +
      \langle F(0),\varphi \rangle
      \\\leq&
      -\lambda_{1}\|\varphi\|^{2} + L_F\|\varphi\|^{2} + \|F(0)\|\|\varphi\|
      \leq
      -\tfrac{\lambda_{1}-L_F}{2}\|\varphi\|^{2}
      +
      \tfrac{\|F(0)\|^2}{2(\lambda_{1}-L_F)},
\end{split}
\end{equation*}
where we have used the Cauchy--Schwarz inequality and the weighted Young inequality $ab \leq \varepsilon{a^{2}} + \frac{b^{2}}{4\varepsilon}$ for all $a,b \in \R$ with $\varepsilon = \frac{\lambda_{1}-L_F}{2} > 0$.
%}
That is to say, we have
\begin{equation}\label{eq:dissipativity.corollary}
      \langle A\varphi + F(\varphi),\varphi \rangle
      \leq
      -c\|\varphi\|^{2} + C,
      \quad\forall\, \varphi \in \mathcal{D}(A)
\end{equation}
for some constants $c,C > 0$, which is a sufficient condition for $\{X(t,x)\}_{t\geq0}$ being ergodic, see, e.g., \cite[Section~8.6]{da1996ergodicity} and \cite{brehier2016high} for more details.
\begin{thm}[\textbf{Ergodicity of mild solution}]\label{th:EueSPDEs}
      Suppose that Assumptions \ref{ass:main.ass} and \ref{ass:main.ass2} hold. Then $\{X(t)\}_{t\geq0}$ given by \eqref{eq:SIPDE} is ergodic with a unique invariant probability measure $\nu$ satisfying \eqref{eq:ergodic.one}.
\end{thm}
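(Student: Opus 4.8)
The plan is to follow the classical route for semilinear SPDEs with additive noise: establish existence of an invariant measure by a Krylov--Bogoliubov tightness argument resting on the dissipativity \eqref{eq:dissipativity.corollary}, establish uniqueness (and in fact strong mixing) via Doob's theorem for strong Feller irreducible Markov semigroups, and then deduce the $L^2$-ergodicity \eqref{eq:ergodic.one} from the Von Neumann mean ergodic theorem.

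\textbf{Existence of $\nu$.} First I would prove a uniform-in-time moment bound. Writing $X(t) = V(t) + Z(t)$ with $Z(t) := \int_0^t E(t-s)\,\dif{W^Q(s)}$ the stochastic convolution — which satisfies $\sup_{t \geq 0}\E\|Z(t)\|_{\gamma}^2 < \infty$ for every fixed $\gamma \in (0,\beta)$ by Lemma \ref{lea:semigroupsmooth} and \eqref{eq:A.and.Q} — the remainder $V$ solves the random evolution equation $\dif{V(t)} = A V(t)\,\dif{t} + F(V(t)+Z(t))\,\dif{t}$, $V(0) = X_0$, whose energy estimate, based on the dissipativity \eqref{eq:dissipativity.corollary} (equivalently \eqref{eq:disipativity} with $L_F < \lambda_1$) and the linear growth of $F$ implied by \eqref{eq:F.derivative.one}, gives $\sup_{t \geq 0}\E\|V(t)\|^2 < \infty$ and hence $\sup_{t \geq 0}\E\|X(t)\|^2 < \infty$. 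Feeding this back into the variation-of-constants formula for $V$, using $X_0 \in \dot H^{\max(2\beta,1)}$ and the smoothing bound of Lemma \ref{lea:semigroupsmooth}, upgrades it to $\sup_{t \geq 1}\E\|X(t)\|_{\gamma}^2 < \infty$. Since $-A$ has compact inverse (Assumption \ref{ass:main.ass}), the embedding $\dot H^{\gamma} \hookrightarrow H$ is compact, so Chebyshev's inequality shows that the time-averaged laws $\mu_T(\cdot) := \frac1T\int_0^T \P(X(t) \in \cdot)\,\dif{t}$ form a tight family on $H$; the Krylov--Bogoliubov theorem (cf.\ \cite{da1996ergodicity}) then yields an invariant probability measure $\nu$ of $\{P_t\}_{t \geq 0}$.

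\textbf{Uniqueness and mixing.} Here I would invoke Doob's theorem (see \cite[Section~8.6]{da1996ergodicity} and \cite{brehier2016high}): a strong Feller, irreducible Markov semigroup admits at most one invariant measure, all kernels $P_t(x,\cdot)$, $t > 0$, are mutually equivalent, and $P_t^*\mu$ converges weakly to $\nu$ for every initial law $\mu$, i.e.\ the semigroup is strongly mixing. The strong Feller property is checked using that, since $A$ and $Q$ commute and \eqref{eq:A.and.Q} holds, the covariance $Q_t := \int_0^t E(s)QE(s)^*\,\dif{s}$ obeys the range inclusion $E(t)H \subseteq Q_t^{1/2}H$ with $\|Q_t^{-1/2}E(t)\|_{\mathcal{L}(H)}$ integrable as $t \downarrow 0$; this makes the linear (Ornstein--Uhlenbeck) semigroup strong Feller via the Bismut--Elworthy--Li formula, and the bounded derivative \eqref{eq:F.derivative.one} of $F$ propagates the property to $\{P_t\}_{t \geq 0}$ by a Girsanov/perturbation argument. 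Irreducibility follows from the same inclusion together with the global Lipschitz continuity of $F$ by a routine controllability argument showing $P_t(x, B(x_0,r)) > 0$ for all $x,x_0 \in H$, $r>0$, $t>0$.

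\textbf{Conclusion and the main difficulty.} Combining the two parts, $\{X(t)\}_{t \geq 0}$ has a unique invariant probability measure $\nu$. By the Von Neumann mean ergodic theorem \cite[Theorem 5.12]{da2006introduction}, for every $\Phi \in L^2(H,\nu)$ the average $\frac1T\int_0^T P_t\Phi\,\dif{t}$ converges in $L^2(H,\nu)$ to the orthogonal projection of $\Phi$ onto $\{g \in L^2(H,\nu) : P_tg = g \text{ for all } t \geq 0\}$; uniqueness of $\nu$ forces this subspace to consist only of the $\nu$-a.e.\ constants, so the limit equals $\int_H \Phi(y)\,\nu(\dif{y})$, which is precisely \eqref{eq:ergodic.one}. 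I expect the main obstacle to be the verification of the strong Feller and irreducibility properties when $\beta$ is close to $0$ (the space-time white noise case in $d=1$): one must analyse the inclusion $E(t)H \subseteq Q_t^{1/2}H$ and the rate of $\|Q_t^{-1/2}E(t)\|_{\mathcal{L}(H)}$ as $t \downarrow 0$ explicitly in the eigenbasis $\{e_i\}_{i \in \N}$, and it is exactly the commutativity of $A$ and $Q$ together with \eqref{eq:A.and.Q} that makes both available.
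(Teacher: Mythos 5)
Your existence step (uniform moment bounds via the splitting $X = V + Z$, compactness of $\dot H^{\gamma}\hookrightarrow H$, Krylov--Bogoliubov) and your final reduction of \eqref{eq:ergodic.one} to the Von Neumann mean ergodic theorem are fine. The genuine gap is in the uniqueness step. Doob's theorem needs the strong Feller property, and your verification of it rests on the range inclusion $E(t)H \subseteq Q_t^{1/2}H$ together with a bound on $\|Q_t^{-1/2}E(t)\|_{\mathcal{L}(H)}$. Under Assumption \ref{ass:main.ass2} this is simply not available: $Q$ is only assumed positive definite and to satisfy \eqref{eq:A.and.Q}, which (since $A$ and $Q$ commute, so $Qe_i=q_ie_i$) is the \emph{upper} bound $\sum_i \lambda_i^{\beta-1}q_i<\infty$; no lower bound on the $q_i$ is imposed. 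In the diagonal case one has
\begin{equation*}
\|Q_t^{-1/2}E(t)\|_{\mathcal{L}(H)}^{2}
=\sup_{i\in\N}\frac{2\lambda_i e^{-2\lambda_i t}}{q_i\,(1-e^{-2\lambda_i t})},
\end{equation*}
and for rapidly degenerating noise (say $q_i=e^{-e^{i}}$, which satisfies \eqref{eq:A.and.Q} for every $\beta$) this supremum is infinite for every $t>0$, so the Ornstein--Uhlenbeck semigroup is not strong Feller and neither a Bismut--Elworthy--Li nor a Girsanov argument applies. So your route proves the theorem only under an additional nondegeneracy hypothesis (e.g.\ $q_i\gtrsim\lambda_i^{-\kappa}$), which the paper deliberately avoids; note that the paper does use your strong Feller/irreducibility argument, but only for the spectral Galerkin approximation, where $\bar Q$ has full rank $n$ and nondegeneracy is automatic.

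The paper's own proof of this theorem goes a different way: it verifies the dissipativity of the drift, namely \eqref{eq:dissipativity.corollary}, which follows from \eqref{eigenpairs} and the one-sided Lipschitz condition \eqref{eq:disipativity} with $L_F<\lambda_1$, and then invokes the ergodicity theory for dissipative systems (\cite[Section~8.6]{da1996ergodicity}, \cite{brehier2016high}). The key point there is that for additive noise two solutions driven by the same Wiener path satisfy
\begin{equation*}
\frac{\dif{}}{\dif{t}}\|X(t,x)-X(t,y)\|^{2}\leq -2(\lambda_1-L_F)\|X(t,x)-X(t,y)\|^{2},
\end{equation*}
so the flow is an exponential contraction; this yields uniqueness of the invariant measure (indeed mixing) with no strong Feller, no irreducibility, and no nondegeneracy of $Q$ whatsoever. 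This is exactly the reason Remark 2.4 insists on formulating \eqref{eq:disipativity} with $L_F<\lambda_1$ separately from \eqref{eq:F.derivative.one}. To repair your argument within the paper's hypotheses, replace the Doob step by this contraction argument; your Krylov--Bogoliubov existence part and the Von Neumann conclusion can be kept as they are.
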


\section{Spatial discretization and its ergodicity}

This section aims to analyze the error of invariant measures in the spatial direction.
To this end, we first obtain a numerical solution $\{X^{n}(t)\}_{t \geq 0}$ in space by applying a spectral Galerkin method to \eqref{eq:SPDE} %and introduce a sufficient condition for this approximation being ergodic
in Subsection \ref{sec:discrete.semi}.
Subsection \ref{sec:ergodicity.semi} shows that $\{X^{n}(t)\}_{t \geq 0}$ is ergodic with a unique invariant measure $\nu_{n}$.
This ergodicity and the time-independent weak error established in
Subsection~\ref{eq:weak.error.semi} finally imply the convergence order of invariant measures $\nu$ and $\nu_{n}$ in Subsection~\ref{sec:order.semi}.

\subsection{Spectral Galerkin method}
%and the theorem of ergodicity}
           \label{sec:discrete.semi}

For every $n \in \N$, we define the finite-dimensional subspace $H_{n}$ of $H$ by
$
   H_{n} := \text{span}\{e_{1}, e_{2}, \ldots, e_{n}\}
$
and projection operator $P_{n} \colon H \to H_{n}$ by
$
      P_{n} \varphi
      =
      \sum_{i=1}^{n}
      \langle e_{i} , \varphi \rangle e_{i}
$
for all $\varphi \in H$. Now we introduce the spectral Galerkin approximation to \eqref{eq:SPDE} in $ H_{n} $ as follows
\begin{equation}\label{eq:SDE}
      \left\{
      \begin{array}{l}
      \hspace{-0.6em}
            \diff{X^{n}(t)}
            =
            A_{n} X^{n}(t) \diff{t}
            +
            P_{n}F(X^{n}(t)) \diff{t}
            +
            P_{n} \diff{W^{Q}(t)},
            \quad\forall\,t > 0,
      \\
      \hspace{-0.6em}
            X^{n}(0) = X_{0}^{n}:=P_{n}X_{0} \in H_{n},
      \end{array}
      \right.
\end{equation}where $A_{n} \colon H_{n} \to H_{n}$ is defined by
$A_{n} := A P_{n}$ and generates a strongly continuous semigroup
$E_{n}(t) = e^{tA_{n}}, t \geq 0$ on $H_{n}$.
Similarly, for every $\gamma \in \R$ we can define
$(-A_{n})^{\gamma} \colon H_{n} \to H_{n}$  as
$
    (-A_{n})^{\gamma}\varphi
    :=
    \sum_{i=1}^{n}
    \lambda_{i}^{\gamma}
    \langle \varphi,e_{i} \rangle e_{i}
$
for all $\varphi \in H_{n}$.
Note that $(-A_{n})^{\gamma}P_{n}\varphi = (-A)^{\gamma}P_{n}\varphi$ and
$E_{n}(t)P_{n}\varphi = E(t)P_{n}\varphi$ hold for all $\varphi \in H$.
Furthermore, variants of conditions in Assumptions~\ref{ass:main.ass}, \ref{ass:main.ass2} and
\eqref{lem:semigroup.smooth} remain true and will be frequently used in the following estimates. For example, we have
\begin{align}
      \label{eq:A.and.Q.n}
      \|
            (-A_{n})^{\frac{\beta-1}{2}}P_{n}Q^{\frac{1}{2}}
      \|_{\mathcal{L}_{2}(H,H_{n})}
      <& \infty ,
      \quad\text{for some}~
      \beta \in(0,1],
      \\
%      \label{eq:F.derivative.one.n}
%      \|F'(\varphi)\psi\|
%      \leq&
%      L\|\psi\|,\quad \varphi,\psi \in H,
      \label{eq:dissipativity.corollary.n}
      \langle A_{n}\varphi + P_{n}F(\varphi),\varphi \rangle
      \leq&
      -c\|\varphi\|^{2} + C,
      \quad\forall\,
      \varphi \in \mathcal{D}(A_{n}),
      \\
      \label{eq:F.derivative.one.delta.n}
      \|(-A_{n})^{-\frac{\delta}{2}}P_{n}F'(\varphi)\psi\|
      \leq&
      L(1+\|\varphi\|_{1})\|\psi\|_{-1},
      \quad\forall\, \varphi \in \dot{H}^{1}, \psi \in H,\delta \in [1,2),
      \\
      \label{eq:F.derivative.second.n}
      \|(-A_{n})^{-\eta}P_{n}F''(\varphi)(\psi_{1},\psi_{2})\|
      \leq&
      L\|\psi_{1}\|\|\psi_{2}\|,\quad\forall\,
      \varphi,\psi_{1},\psi_{2} \in H,\eta \in[0,1),
      \\
      \label{eq:semigroup.smooth.one}
      \|(-A_{n})^{\gamma}E_{n}(t)\|_{\mathcal{L}(H_{n})}
      \leq&
      Ct^{-\gamma}e^{-\frac{\lambda_{1}}{2}t},
      \quad\forall\,
      t > 0,\gamma \geq 0,
      \\
      \label{eq:semigroup.smooth.two}
      \|(-A_{n})^{-\rho}(E_{n}(t) - E_{n}(s))\|_{\mathcal{L}(H_{n})}
      \leq&
      C(t-s)^{\rho}e^{-\frac{\lambda_{1}}{2}s},
      \quad\forall\,
      0 \leq s < t , \rho \in [0,1],
%      \\
%      \label{lem:semigroup.smooth.three}
%      \|E_{n}(t) - E_{n}(s)\|_{\mathcal{L}(H_{n})}
%      \leq&
%      C\frac{(t-s)^{\rho}}{s^{\rho}}
%      e^{-\frac{\lambda_{1}}{2}s}
%      \quad
%      0 < s < t , \rho \in [0,1].
\end{align}
where $\beta,\delta,\eta$ are the same with the parameters in \eqref{eq:A.and.Q}, \eqref{eq:F.derivative.one.delta},
\eqref{eq:F.derivative.second}, respectively and the constants $c,C,L$ are independent of $n$ and $t$.
It is easy to verify the above estimates by taking the previous conditions or assertions into account.
For example, the proof of \eqref{eq:semigroup.smooth.one} is similar to that of the first assertion of \eqref{lem:semigroup.smooth} in Lemma \ref{lea:semigroupsmooth} as follows
\begin{equation*}
\begin{split}
\|(-A_{n})^{\gamma}&E_{n}(t)\|_{\mathcal{L}(H_{n})}
=
\sup\limits_{1 \leq i \leq n}\|(-A_{n})^{\gamma}E_{n}(t)e_{i}\|
=
\sup\limits_{1 \leq i \leq n}\lambda_{i}^{\gamma}e^{-\lambda_{i}t}
\\\leq&
2^\gamma t^{-\gamma}e^{-\frac{\lambda_{1}}{2}t}
\sup\limits_{1 \leq i \leq n}
\big(\tfrac{\lambda_{i}t}{2}\big)^{\gamma}e^{-\frac{\lambda_{i}t}{2}}
\leq
C t^{-\gamma}e^{-\frac{\lambda_{1}}{2}t},
\quad\forall\,t > 0,\gamma \geq 0.
\end{split}
\end{equation*}
Moreover, the above assumptions ensure that \eqref{eq:SDE} has a well-defined solution with a uniform mean square moment bound.
%, which is the content of the following proposition. semi-discretization solution

\begin{thm}[\textbf{Existence, uniqueness and moment boundedness of spatial approximation}]\label{th:exist.unique}
      Suppose that Assumptions \ref{ass:main.ass} and \ref{ass:main.ass2} hold. Then \eqref{eq:SDE} admits a unique solution $X^{n} \colon [0,\infty) \times \Omega \to H_{n}$ with continuous sample path given by
      \begin{equation}
            \label{eq:SDE.integral}
            X^{n}(t)
      =
            E_{n}(t)X_{0}^{n}
            +
            \int_{0}^{t}
                E_{n}(t-s)P_{n}F(X^{n}(s))
            \diff{s}
            +
            \int_{0}^{t}
                E_{n}(t-s)P_{n}
            \diff{W^{Q}(s)},
            \quad\forall\, t \geq 0,
            \quad\P\text{-a.s}.
      \end{equation}
      Moreover, there exists a constant
      $C = C(X_{0})>0$ independent of $n,t$ such that
      \begin{equation}
            \label{eq:moment.bound.n}
            \E\big[\|X^{n}(t)\|^{2}\big] \leq C.
      \end{equation}
\end{thm}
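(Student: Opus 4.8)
The plan is to regard \eqref{eq:SDE} as a finite-dimensional It\^o equation on $H_n\cong\R^n$, settle its well-posedness by classical arguments, and then derive the uniform moment bound \eqref{eq:moment.bound.n} by subtracting off the stochastic convolution, so as to replace a moment estimate for an It\^o process by a pathwise energy estimate for a random ordinary differential equation.

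For existence and uniqueness, note that on $H_n$ the drift $\varphi\mapsto A_n\varphi+P_nF(\varphi)$ is globally Lipschitz: $A_n$ is a bounded linear operator on $H_n$, and $F$ (hence $P_nF$) is globally Lipschitz with constant $L$ by \eqref{eq:F.derivative.one}. The noise $P_nW^Q(t)=\sum_{i=1}^n\sqrt{q_i}\beta_i(t)e_i$ involves only finitely many independent Brownian motions and the diffusion coefficient $P_nQ^{\frac12}$ is constant (additive noise) and Hilbert--Schmidt, so \eqref{eq:SDE} is a standard finite-dimensional SDE with Lipschitz coefficients, which admits a unique strong solution with continuous sample paths; the variation-of-constants formula for the bounded generator $A_n$ (i.e.\ It\^o's formula applied to $s\mapsto E_n(t-s)X^n(s)$) turns this into the mild representation \eqref{eq:SDE.integral}. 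In particular all moments of $X^n$ are finite on bounded time intervals, so every expectation below is a priori finite.

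For the moment bound I would split $X^n(t)=\bar X^n(t)+Z^n(t)$, where $Z^n(t):=\int_0^tE_n(t-s)P_n\diff{W^Q(s)}$ is the stochastic convolution. By the It\^o isometry and $E_n(t)P_ne_i=e^{-\lambda_it}e_i$ for $i\le n$, one has $\E\|Z^n(t)\|^2=\sum_{i=1}^nq_i\frac{1-e^{-2\lambda_it}}{2\lambda_i}\le\frac12\sum_{i=1}^\infty q_i\lambda_i^{-1}$, and since $\lambda_i^{-1}\le\lambda_1^{-\beta}\lambda_i^{\beta-1}$ for $\beta\in(0,1]$, this is at most $\frac12\lambda_1^{-\beta}\|(-A)^{\frac{\beta-1}{2}}Q^{\frac12}\|_{\mathcal L_2(H)}^2=:C_Z<\infty$ by \eqref{eq:A.and.Q}, uniformly in $n$ and $t$. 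The remainder $\bar X^n=X^n-Z^n$ solves the random ODE $\frac{\mathrm d}{\mathrm dt}\bar X^n(t)=A_n\bar X^n(t)+P_nF(\bar X^n(t)+Z^n(t))$ with $\bar X^n(0)=X_0^n$ and has $C^1$ paths, so I can differentiate $\|\bar X^n(t)\|^2$ pathwise. Writing $P_nF(\bar X^n+Z^n)=P_nF(\bar X^n)+P_n\big(F(\bar X^n+Z^n)-F(\bar X^n)\big)$, applying the dissipativity bound \eqref{eq:dissipativity.corollary.n} to the first summand, and the Lipschitz bound together with Young's inequality to the second, gives $\frac{\mathrm d}{\mathrm dt}\|\bar X^n\|^2\le-c\|\bar X^n\|^2+2C+\frac{L^2}{c}\|Z^n\|^2$. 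Gronwall's inequality, $\|X_0^n\|\le\|X_0\|$, and the uniform bound on $\E\|Z^n(s)\|^2$ then yield $\sup_{t\ge0}\E\|\bar X^n(t)\|^2\le\|X_0\|^2+\frac{2C}{c}+\frac{L^2C_Z}{c^2}$ uniformly in $n$, and \eqref{eq:moment.bound.n} follows from $\|X^n\|^2\le2\|\bar X^n\|^2+2\|Z^n\|^2$.

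The step needing the most care is making the constant uniform in both $n$ and $t$. Applying It\^o's formula directly to $\|X^n(t)\|^2$ instead would introduce the term $\operatorname{Tr}(P_nQP_n)=\sum_{i=1}^nq_i$, which need not be bounded in $n$ (e.g.\ for space-time white noise, $q_i\equiv1$); this is exactly why one must peel off $Z^n$ and exploit the extra smoothing factor $\lambda_i^{-1}$ it carries, controlled by \eqref{eq:A.and.Q}, together with the exponential decay $e^{-ct}$ furnished by \eqref{eq:dissipativity.corollary.n} and the contractivity recorded in \eqref{eq:semigroup.smooth.one}. Verifying $\sum_iq_i/\lambda_i<\infty$ via \eqref{eq:A.and.Q} and tracking the time-uniformity through the Gronwall step are the only points requiring attention; everything else is routine.
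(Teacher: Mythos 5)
Your proposal is correct and follows essentially the same route as the paper: cite finite-dimensional SDE theory for well-posedness, subtract the stochastic convolution, bound it uniformly via the It\^o isometry and \eqref{eq:A.and.Q}, and then run a pathwise dissipativity/Young/Gronwall argument on the remainder exactly as in the paper's proof. The only cosmetic difference is that you bound $\E\|Z^n(t)\|^2$ by summing the eigenmodes explicitly, whereas the paper uses the smoothing estimate \eqref{eq:semigroup.smooth.one} together with the Gamma-function integral \eqref{eq:Gamma}; both rest on the same ingredients.
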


\begin{proof}
  It suffices to show \eqref{eq:moment.bound.n} since the existence of the unique solution $\{X^{n}(t)\}_{t\geq0}$ can be found in\cite[Theorem 4.5.3]{kloeden1992numerical}. In fact, set
  $
      \mathcal{O}^{n}(t)
      :=
      \int_{0}^{t}
          E_{n}(t-s)P_{n}
      \diff{W^{Q}(s)},
      \forall\,t \geq 0
  $,
  we can apply the It\^{o} isometry, \eqref{eq:A.and.Q.n} and \eqref{eq:semigroup.smooth.one} to derive that
  \begin{equation}
  \begin{split}
      \label{eq:eu.convolution}
      \E\big[\|\mathcal{O}^{n}(t)\|^{2}\big]
      =&
%      \E\Big[\Big\|
%        \int_{0}^{t} E_{n}(t-s)P_{n} \diff{W^{Q}(s)}
%        \Big\|^{2}\Big]
%      \\&=
      \int_{0}^{t}
          \big\|
          E_{n}(t-s) P_{n} Q^{\frac{1}{2}}
          \big\|_{\mathcal{L}_{2}(H,H_{n})}^{2}
      \diff{s}
%      \\=&
%      \int_{0}^{s}
%          \big\|
%          E_{n}(s-r)(-A_{n})^{\frac{1-\beta}{2}}
%          \cdot
%          (-A_{n})^{\frac{\beta-1}{2}}P_{n}Q^{\frac{1}{2}}
%          \big\|_{\mathcal{L}_{2}(H,H_{n})}^{2}
%      \diff{r}
      \\\leq&
      \|
            (-A_{n})^{\frac{\beta-1}{2}}P_{n}Q^{\frac{1}{2}}
      \|_{\mathcal{L}_{2}(H,H_{n})}^{2}
      \int_{0}^{t}
          \|
               (-A_{n})^{\frac{1-\beta}{2}} E_{n}(t-s)
          \|_{\mathcal{L}(H_{n})}^{2}
      \diff{s}
      \\\leq&
      C\int_{0}^{t} (t-s)^{\beta-1} e^{-\lambda_{1}(t-s)} \diff{s}
      %\\\leq&
      \leq C,
      %C\lambda_{1}^{-\beta},
  \end{split}
  \end{equation}
  where in the last step we used the Gamma function
  \begin{equation}\label{eq:Gamma}
    \int_0^\infty x^{\varrho-1}e^{-x} \diff{x} < \infty, \quad\forall\, \varrho>0.
  \end{equation}
  Define $\bar{X}^{n}(t) := X^{n}(t) - \mathcal{O}^{n}(t), \forall\,t \geq 0$, then it satisfies the following partial differential equation
  \begin{equation}
      \label{eq:PDE}
%      \left\{
%      \begin{array}{l}
%      \hspace{-0.6em}
            \frac{\diff{\bar{X}^{n}(t)}}{\diff{t}}
            =
            A_{n}\bar{X}^{n}(t)
            +
            P_{n}F(\bar{X}^{n}(t)+\mathcal{O}^{n}(t)),
%            \\
%      \hspace{-0.6em}
            \quad\forall\, t > 0,
            \quad
            \bar{X}^{n}(0)=X_{0}^{n}.
%      \end{array}
%      \right.
  \end{equation}
  As a result, we have
  \begin{equation}
  \begin{split}
      \frac{\diff{e^{ct}\|\bar{X}^{n}(t)\|^{2}}}{\diff{t}}
%      =
%      2e^{ct}
%      \Big\langle
%      \frac{\diff{\bar{X}^{n}(t)}}{\diff{t}}
%      ,
%      \bar{X}^{n}(t)
%      \Big\rangle
%      +
%      ce^{ct}\|\bar{X}^{n}(t)\|^{2}
      =
      2e^{ct}
      \big\langle
      A_{n}\bar{X}^{n}(t) + P_{n}F(\bar{X}^{n}(t)+\mathcal{O}^{n}(t))
      ,
      \bar{X}^{n}(t)
      \big\rangle
      +
      ce^{ct}\|\bar{X}^{n}(t)\|^{2},
  \end{split}
  \end{equation}
  where the constant $c$ comes from \eqref{eq:dissipativity.corollary.n}. Employing \eqref{eq:dissipativity.corollary.n}, \eqref{eq:F.derivative.one}, the Cauchy--Schwarz inequality and the weighted Young inequality $ab \leq \varepsilon{a^{2}} + \frac{b^{2}}{4\varepsilon}$ for all $a,b \in \R$ with $\varepsilon = \frac{c}{2L} > 0$ leads to
  \begin{equation}
  \begin{split}
      \label{eq:eu.inte}
      e^{ct}\|\bar{X}^{n}(t)\|^{2}
  =&
%    \|\bar{X}_{0}^{n}\|^{2}
%    +
%    2\int_{0}^{t}
%    e^{cs}
%    \Big\langle
%    A_{n}\bar{X}^{n}(s)
%     +
%     P_{n}F(\bar{X}^{n}(s)+\mathcal{O}^{n}(s))
%    ,
%    \bar{X}^{n}(s)
%    \Big\rangle
%    \diff{s}
%    +
%    c\int_{0}^{t}e^{cs}\|\bar{X}^{n}(s)\|^{2}\diff{s}
%\\=&
      \|\bar{X}_{0}^{n}\|^{2}
      +
      2\int_{0}^{t}
      e^{cs}
      \big\langle
      A_{n}\bar{X}^{n}(s) + P_{n}F(\bar{X}^{n}(s))
      ,
      \bar{X}^{n}(s)
      \big\rangle
      \diff{s}
      +
      c\int_{0}^{t} e^{cs}\|\bar{X}^{n}(s)\|^{2} \diff{s}
      \\&+
      2\int_{0}^{t}
      e^{cs}
      \big\langle
      P_{n}F(\bar{X}^{n}(s)+\mathcal{O}^{n}(s))
      -
      P_{n}F(\bar{X}^{n}(s))
      ,
      \bar{X}^{n}(s)
      \big\rangle
      \diff{s}
\\\leq&
      \|\bar{X}_{0}^{n}\|^{2}
      +
      2\int_{0}^{t}
      e^{cs} (-c\|\bar{X}^{n}(s)\|^{2} + C)
      \diff{s}
      +
      c\int_{0}^{t} e^{cs}\|\bar{X}^{n}(s)\|^{2} \diff{s}
      \\&+
      2\int_{0}^{t}
      e^{cs}
      \|
             P_{n}F(\bar{X}^{n}(s)+\mathcal{O}^{n}(s))
             -
             P_{n}F(\bar{X}^{n}(s))
       \|
      \|\bar{X}^{n}(s)\|
      \diff{s}
  \\\leq&
%      \|\bar{X}_{0}^{n}\|^{2}
%      -
%      2c\int_{0}^{t} e^{cs} \|\bar{X}^{n}(s)\|^{2} \diff{s}
%      +
%      2C\int_{0}^{t} e^{cs} \diff{s}
%      +
%      c\int_{0}^{t}e^{cs}\|\bar{X}^{n}(s)\|^{2}\diff{s}
%      \\&+
%      2L_{F}\int_{0}^{t}
%               e^{cs} \| \mathcal{O}^{n}(s)\| \|\bar{X}^{n}(s)\|
%            \diff{s}
%\\=&
      \|\bar{X}_{0}^{n}\|^{2}
      -
      c\int_{0}^{t} e^{cs} \|\bar{X}^{n}(s)\|^{2} \diff{s}
      +
      2C\frac{e^{ct}-1}{c}
      +
      2L\int_{0}^{t}
                e^{cs} \| \mathcal{O}^{n}(s)\| \|\bar{X}^{n}(s)\|
            \diff{s}
%            \\&\because
%            2L_{F}\int_{0}^{t}
%                  e^{cs} \|\mathcal{O}^{n}(s)\|\|\bar{X}^{n}(s)\|
%                  \diff{s}
%            =
%            \int_{0}^{t}
%                e^{cs}\cdot
%                \big(
%                2L_{F} \|\mathcal{O}^{n}(s)\| \times \|\bar{X}^{n}(s)\|
%                \big)
%            \diff{s}
%            \\&\leq
%            \int_{0}^{t}
%                e^{cs}\cdot
%                \Big(
%                \frac{L_{F}^{2}\|\mathcal{O}^{n}(s)\|^{2}}{c}
%                +
%                c\|\bar{X}^{n}(s)\|^{2}
%                \Big)
%            \diff{s}
%            \\&=
%            \frac{L_{F}^{2}}{c}
%            \int_{0}^{t}
%                e^{cs}\|\mathcal{O}^{n}(s)\|^{2}
%            \diff{s}
%            +
%            c\int_{0}^{t}
%                 e^{cs} \|\bar{X}^{n}(s)\|^{2}
%             \diff{s}
  \\\leq&
%      \|\bar{X}_{0}^{n}\|^{2}
%      -
%      c\int_{0}^{t}
%           e^{cs} \|\bar{X}^{n}(s)\|^{2}
%       \diff{s}
%      +
%      2C\frac{e^{ct}-1}{c}
%      +
%      \frac{L_{F}^{2}}{c}
%      \int_{0}^{t}
%          e^{cs} \|\mathcal{O}^{n}(s)\|^{2}
%      \diff{s}
%      +
%      c\int_{0}^{t}
%           e^{cs} \|\bar{X}^{n}(s)\|^{2}
%      \diff{s}
%\\=&
      \|\bar{X}_{0}^{n}\|^{2}
      +
      2C\frac{e^{ct}-1}{c}
      +
      \frac{L^{2}}{c}
      \int_{0}^{t}
          e^{cs} \|\mathcal{O}^{n}(s)\|^{2}
      \diff{s}.
  \end{split}
  \end{equation}
  Taking expectations on the both sides of \eqref{eq:eu.inte} and using \eqref{eq:eu.convolution} yield
  \begin{equation}\label{eq:eu.last}
  \begin{split}
      e^{ct}\E\big[\|\bar{X}^{n}(t)\|^{2}\big]
  \leq&
%      \E\big[\|\bar{X}_{0}^{n}\|^{2}\big]
%      +
%      2C\frac{e^{ct}-1}{c}
%      +
%      \frac{L_{F}^{2}}{c}
%      \int_{0}^{t}
%          e^{cs} \E\big[\|\mathcal{O}(s)\|^{2}\big]
%      \diff{s}
%\\\leq&
      \|\bar{X}_{0}^{n}\|^{2}
      +
      2C\frac{e^{ct}-1}{c}
      +
      \frac{CL^{2}}{c}
      \int_{0}^{t}
          e^{cs}
      \diff{s}
%\\=&
%      \E\big[\|\bar{X}_{0}^{n}\|^{2}\big]
%      +
%      2C\frac{e^{ct}-1}{c}
%      +
%      \frac{L_{F}^{2}}{c}C\lambda_{1}^{-\beta}
%      \frac{e^{ct}-1}{c}
%  \\\leq&
%      \E\big[\|\bar{X}_{0}^{n}\|^{2}\big]
%      + C ( e^{ct} + 1) ,
  \leq
      \|\bar{X}_{0}^{n}\|^{2}
      + Ce^{ct}.
  \end{split}
  \end{equation}
Multiplying $e^{-ct}$ on the both sides of \eqref{eq:eu.last} gives
$\E\big[\|\bar{X}^{n}(t)\|^{2}\big]
\leq \|\bar{X}_{0}^{n}\|^{2} + C$. This together with \eqref{eq:eu.convolution} and $\E\big[\|X^{n}(t)\|^{2}\big]
\leq 2\E\big[\|\bar{X}^{n}(t)\|^{2}\big] + 2\E\big[\|\mathcal{O}^{n}(t)\|^{2}\big]$ results in the required conclusion \eqref{eq:moment.bound.n}.
\end{proof}

\subsection{Ergodicity for the spatial discretization}
           \label{sec:ergodicity.semi}

In order to give a sufficient condition for the ergodicity of the semi-discretization approximations process $\{X^{n}(t)\}_{t\geq0}$, we begin with a definition.

\begin{den}[Lyapunov condition]
	  Denote the solution $\{X^{n}(t)\}_{t\geq0}$ of \eqref{eq:SDE} with initial
	  value $X^{n}(0) = z \in H_{n}$ by $\{X^{n}(t,z)\}_{t\geq0}$ and
      let $V \colon H_{n} \to [0,\infty]$ be a Borel function with compact level sets
	  %\begin{equation}\label{eq:lyapunov.level}
	  $K_{a} := \{ x \in H_{n} : V(x) \leq a \}$
	  %\end{equation}
	  for all $a > 0$.
	  We say that $\{X^{n}(t)\}_{t\geq0}$ satisfies the Lyapunov condition
	  if there exist $z \in H_{n}$ and $C(z) > 0$ such that
	  \begin{equation}\label{eq:lyapunov.initial}
	  \E[V(X^{n}(t,z))] \leq C(z), \quad \forall\, t > 0.
	  \end{equation}
\end{den}

Now we have the following theorem, see Proposition 7.10, Theorems 7.6 and 5.16 in \cite{da2006introduction}.
\begin{thm}\label{th:semi.ergodic}
%      \textcolor{red}{Let $V \colon H_{n} \to [0,\infty]$ be a Borel function (Lyapunov function) whose level sets
%      \begin{equation}\label{eq:lyapunov.level}
%            K_{a} := \{ x \in H_{n} : V(x) \leq a \}, \quad \forall\, a > 0,
%      \end{equation}
%      are compact. Let $\{X^{n}(t,z)\}_{t\geq0}$ be the solution of \eqref{eq:SDE} with initial value $X^{n}(0) = z \in H_{n}$ and assume that there exist $z \in H_{n}$ and $C(z) > 0$ such that
%      \begin{equation}\label{eq:lyapunov.initial}
%            \E[V(X^{n}(t,z))] \leq C(z),
%            \quad
%            \forall\, t > 0.
%      \end{equation}}
      If the solution $\{X^{n}(t)\}_{t\geq0}$ of \eqref{eq:SDE} satisfies the Lyapunov condition,
      then $\{X^{n}(t)\}_{t\geq0}$ possesses at least one invariant probability measure. If in addition it happens that the corresponding Markov semigroup of $\{X^{n}(t)\}_{t\geq0}$ is strong Feller and irreducible, then $\{X^{n}(t)\}_{t\geq0}$ possesses a unique invariant probability measure and hence is ergodic.
\end{thm}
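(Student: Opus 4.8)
\emph{Proof strategy.} The plan is to obtain both assertions from the abstract ergodic theory for Markov semigroups; since \eqref{eq:SDE} is a genuine finite-dimensional SDE in $H_n$, the work reduces to checking that the hypotheses of the cited results apply. Write $P^{n}_{t}\Phi(z) = \E[\Phi(X^{n}(t,z))]$ for the transition semigroup of $\{X^{n}(t)\}_{t\geq0}$ acting on $B_{b}(H_{n})$. The preliminary step is to note that $\{P^{n}_{t}\}_{t\geq0}$ is Feller, i.e. $P^{n}_{t}\Phi\in C_{b}(H_{n})$ whenever $\Phi\in C_{b}(H_{n})$: because $A_{n}$ is a bounded linear operator on the finite-dimensional space $H_{n}$ and $F$ is globally Lipschitz by \eqref{eq:F.derivative.one}, the coefficients of \eqref{eq:SDE} are Lipschitz, so $z\mapsto X^{n}(t,z)$ depends continuously on the initial datum in $L^{2}(\Omega;H_{n})$, and (bounded) convergence yields continuity of $z\mapsto P^{n}_{t}\Phi(z)$.

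For existence I would run the Krylov--Bogoliubov argument, which is precisely Proposition 7.10 in \cite{da2006introduction}. Fix the point $z\in H_{n}$ furnished by the Lyapunov condition, let $\mu_{t}$ be the law of $X^{n}(t,z)$ on $H_{n}$, and put $\mu_{T} := \tfrac1T\int_{0}^{T}\mu_{t}\,\dif{t}$. The level set $K_{a}=\{x\in H_{n}\colon V(x)\leq a\}$ is compact, and Markov's inequality together with \eqref{eq:lyapunov.initial} gives $\mu_{t}(H_{n}\setminus K_{a})=\P(V(X^{n}(t,z))>a)\leq C(z)/a$ for every $t>0$, whence $\mu_{T}(H_{n}\setminus K_{a})\leq C(z)/a$ uniformly in $T>0$. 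Thus $\{\mu_{T}\}_{T>0}$ is tight; Prokhorov's theorem yields a subsequence $\mu_{T_{k}}$ converging weakly to a probability measure $\nu_{n}$, and the Feller property lets one pass to the limit to conclude that $\nu_{n}$ is invariant for $\{P^{n}_{t}\}_{t\geq0}$.

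Under the additional hypotheses I would invoke the Khasminskii--Doob theorem, that is Theorems 7.6 and 5.16 in \cite{da2006introduction}: if $\{P^{n}_{t}\}_{t\geq0}$ is strong Feller and irreducible then it is regular, the transition probabilities $P^{n}_{t}(z,\cdot)$, $t>0$, $z\in H_{n}$, being mutually equivalent; Doob's theorem then forces uniqueness of the invariant measure and strong mixing, in particular ergodicity in the sense of Section \ref{sec:setting}. This closes the proof modulo the cited abstract statements.

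The genuinely delicate points are not needed for the statement as phrased, since it takes the Lyapunov condition together with the strong Feller and irreducibility properties as hypotheses; what must be verified when the theorem is applied is that $V(x)=\|x\|^{2}$ serves as a Lyapunov function --- its sublevel sets are bounded, hence compact, in the finite-dimensional space $H_{n}$, and \eqref{eq:moment.bound.n} is exactly \eqref{eq:lyapunov.initial} with $z=P_{n}X_{0}$ --- and that \eqref{eq:SDE} is strong Feller and irreducible. The latter follows because, $Q$ being self-adjoint, positive definite and commuting with $A$ by Assumption \ref{ass:main.ass2}, the operator $P_{n}Q^{1/2}$ is invertible on $H_{n}$, so \eqref{eq:SDE} is an SDE with non-degenerate additive noise on $H_{n}$, for which strong Feller and irreducibility are classical. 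I expect this verification, rather than the abstract reduction above, to be where the real content lies.
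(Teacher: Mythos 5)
Your proposal is correct and follows essentially the same route as the paper: the paper states this result without proof, citing exactly Proposition 7.10 and Theorems 7.6 and 5.16 of \cite{da2006introduction}, and your argument is the standard Krylov--Bogoliubov tightness construction plus the Khasminskii--Doob regularity/uniqueness theorem that those citations encapsulate. Your closing remarks on verifying $V(x)=\|x\|^{2}$, strong Feller and irreducibility correctly identify these as hypotheses consumed later (in the proof of the ergodicity of $\{X^{n}(t)\}_{t\geq0}$), not part of this statement.
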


With the above theorem, we have the following result.

\begin{thm}[\textbf{Ergodicity of $\{ X^{n}(t) \}_{t \geq 0}$}]\label{th:ergodic.two}
      Suppose that Assumptions \ref{ass:main.ass} and \ref{ass:main.ass2} hold. Then $\{ X^{n}(t) \}_{t \geq 0}$ given by \eqref{eq:SDE.integral} is ergodic with a unique invariant measure $\nu^{n}$.
%      such that for all $\Phi \in C_{b}^{2}(H,\R)$
%      \begin{equation}
%            \label{eq:ergodic.two}
%            \lim\limits_{T\to\infty}
%            \frac{1}{T}
%            \int_{0}^{T} \E\big[\Phi(X^{n}(t))\big] \diff{t}
%            =
%            \int_{H_{n}} \Phi(y) \,\nu^{n}(\dif{y}).
%      \end{equation}
\end{thm}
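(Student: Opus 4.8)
The plan is to verify the three hypotheses of Theorem \ref{th:semi.ergodic}: the Lyapunov condition, the strong Feller property, and irreducibility of the Markov semigroup associated with $\{X^{n}(t)\}_{t\geq0}$. Once these are in place, Theorem \ref{th:semi.ergodic} immediately yields a unique invariant probability measure $\nu^{n}$ and ergodicity.

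First I would establish the Lyapunov condition with the natural choice $V(x) = \|x\|^{2}$ on $H_{n}$. Since $H_{n}$ is finite-dimensional, the level sets $K_{a} = \{x \in H_{n} : \|x\|^{2}\leq a\}$ are closed and bounded, hence compact, and $V$ is clearly continuous. The required bound $\E[V(X^{n}(t,z))]\leq C(z)$ for all $t>0$ is essentially the content of Theorem \ref{th:exist.unique}: inequality \eqref{eq:moment.bound.n} gives $\E[\|X^{n}(t)\|^{2}]\leq C$ uniformly in $t$, with $C = C(X_{0})$ depending on the (deterministic) initial datum; the same argument, run with initial value $z\in H_{n}$ in place of $X_{0}^{n}$, produces a constant $C(z)$ independent of $t$. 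This step is routine given the moment estimate already proved.

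Next I would verify the strong Feller property and irreducibility. Both follow from standard Markov-semigroup theory for nondegenerate finite-dimensional SDEs: equation \eqref{eq:SDE} is an SDE in $H_{n}\cong\R^{n}$ with globally Lipschitz drift (by \eqref{eq:F.derivative.one}) and constant, nondegenerate diffusion coefficient $E_{n}(t)$-free part, namely $P_{n}Q^{1/2}$ acting on the driving Wiener process, which by Assumption \ref{ass:main.ass2} ($Q$ positive definite) is invertible on $H_{n}$. Hence the transition probabilities have smooth, strictly positive densities; strong Feller and irreducibility then follow, e.g., from the H\"ormander/nondegeneracy criterion or directly from \cite[Section 7]{da2006introduction}. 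Irreducibility can alternatively be obtained from a support-theorem argument: controllability of the associated deterministic control system is immediate since the control enters through the invertible operator $P_{n}Q^{1/2}$.

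The main obstacle — though a mild one — is making the nondegeneracy argument for strong Feller and irreducibility fully rigorous in a way that is uniform enough to quote Theorem \ref{th:semi.ergodic}. One must check that $P_{n}Q^{1/2}$ is genuinely invertible as an operator on $H_{n}$; this uses that $A$ and $Q$ commute (Assumption \ref{ass:main.ass2}), so that $Q$ is diagonal in the eigenbasis $\{e_{i}\}$ and $Q^{1/2}$ maps $H_{n}$ onto $H_{n}$ with all eigenvalues $\sqrt{q_{i}}>0$ for $1\leq i\leq n$. With this observation the Bismut--Elworthy--Li formula (or the explicit Gaussian-type bounds for the linearized equation, cf. \cite{da1996ergodicity}) gives the strong Feller property, and Girsanov's theorem gives irreducibility. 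Assembling these pieces and invoking Theorem \ref{th:semi.ergodic} completes the proof.
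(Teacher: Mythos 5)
Your proposal is correct and follows essentially the same route as the paper: reduce to the nondegenerate finite-dimensional SDE in $H_{n}\cong\R^{n}$, verify the Lyapunov condition with $V(x)=\|x\|^{2}$ (compact level sets plus a uniform-in-time second-moment bound), verify strong Feller and irreducibility using that $q_{i}>0$ for $1\leq i\leq n$ (since $Q$ is positive definite and commutes with $A$), and then invoke Theorem \ref{th:semi.ergodic}. The only differences are cosmetic choices of tools: the paper re-derives the moment bound via It\^o's formula and the one-sided Lipschitz condition \eqref{eq:disipativity} rather than quoting \eqref{eq:moment.bound.n}, cites Cerrai for strong Feller, and proves irreducibility by the Mattingly--Stuart--Higham control-plus-tube argument, which is precisely the controllability/support-theorem route you sketch as an alternative to Girsanov.
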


\begin{proof}
       To prove the ergodicity of $\{ X^{n}(t) \}_{t \geq 0}$, let us first give an equivalent form of \eqref{eq:SDE}. Since $\{ X^{n}(t) \}_{t \geq 0}$
       is an $H_{n}$-valued stochastic
       process, we have
       \begin{equation}
             \label{eq:sf.zero}
             X^{n}(t)
             =
             \sum\limits_{i=1}^{n} x_{i}(t) e_{i},
             \quad
             x_{i}(t)
             =
             \langle X^{n}(t) , e_{i} \rangle ,
             \quad\forall\,
             i = 1, 2, \ldots, n.
      \end{equation}
      Inserting \eqref{eq:SDE} with \eqref{eq:W.Q.t} into
      $
             x_{i}(t)
      =
             \langle X^{n}(t) , e_{i} \rangle
      $
      yields
      \begin{equation}
            \label{eq:sf.one}
            \diff{x_{i}(t)}
      =
            \big(
                -\lambda_{i}x_{i}(t)
                +
                \langle P_{n}F(X^{n}(t)),e_{i} \rangle
            \big)
            \diff{t}
      +
            \sqrt{q_{i}}  \diff{\beta_{i}(t)},
            \quad\forall\,
             t > 0, i = 1, 2, \ldots, n.
      \end{equation}
From now on, we use $B'$ to denote the transpose of a vector or matrix $B$. By denoting
      \begin{align*}
           x(t)
      =
           ( x_{1}(t), x_{2}(t),& \ldots, x_{n}(t) )'
           \in
           \R^{n},
      \quad
           \beta(t)
      =
           ( \beta_{1}(t), \beta_{2}(t), \ldots, \beta_{n}(t) )'
           \in
           \R^{n},
      \\
           \Lambda
           =
           \text{diag}(-\lambda_{1},&\ldots,-\lambda_{n}) \in \R^{n \times n},
           \quad
           \bar{Q}
           =
           \text{diag}(\sqrt{q_{1}},\ldots,\sqrt{q_{n}}) \in \R^{n \times n},
      \\
           g(x(t))
           =&
           \left(
                 \langle P_{n}F(X^{n}(t)) , e_{1} \rangle,
                 \ldots,
                 \langle P_{n}F(X^{n}(t)) , e_{n} \rangle
           \right)' \in \R^{n},
      \end{align*}
%      $$
%           \Lambda
%      =
%           \left(
%           \begin{array}{cccc}
%                 -\lambda_{1} &              &        &               \\
%                              & -\lambda_{2} &        &               \\
%                              &              & \ddots &               \\
%                              &              &        & -\lambda_{n}  \\
%           \end{array}
%           \right),
%      ~
%           g(x(t))
%      =
%           \left(
%           \begin{array}{c}
%                 \langle P_{n}F(X^{n}(t)) , e_{1} \rangle
%           \\
%                \langle P_{n}F(X^{n}(t)) , e_{2} \rangle
%           \\
%                 \vdots
%           \\
%                 \langle P_{n}F(X^{n}(t)) , e_{n} \rangle
%           \end{array}
%           \right),
%      ~
%           \bar{Q}
%      =
%           \left(
%           \begin{array}{cccc}
%                 \sqrt{q_{1}} &              &        &              \\
%                              & \sqrt{q_{2}} &        &              \\
%                              &              & \ddots &              \\
%                              &              &        & \sqrt{q_{n}} \\
%           \end{array}
%           \right),
%      $$
      we can rewrite \eqref{eq:sf.one} as an $\R^n$-valued SDE
      \begin{equation}
            \label{eq:sf.two}
            \diff{x(t)}
      =
            \big( \Lambda x(t) + g(x(t)) \big) \diff{t}
            +
            \bar{Q} \diff{\beta(t)},
            \quad\forall\, t > 0,
      \end{equation}
      and thus it suffices to show that $\{ x(t) \}_{t \geq 0}$ is ergodic. Indeed, the ergodicity of $\{ x(t) \}_{t \geq 0}$ implies there exists a random variable $\xi = (\xi_{1}, \xi_{2}, \ldots, \xi_{n})$ such that
      $
           \lim\limits_{t \to \infty}   x(t)
      =
           \xi
      $,
      i.e.,
      $
           \lim\limits_{t \to \infty}  x_{i}(t)
      =
           \xi_{i},~i = 1, 2, \ldots, n.
      $
      It follows that
      $
           \lim\limits_{t \to \infty}   X^{n}(t)
      =
           \sum_{i=1}^{n}   \xi_{i} e_{i},
      $
      which immediately ensures that $\{ X^{n}(t) \}_{t \geq 0}$ is
      ergodic by the definition of ergodicity.
      By Theorem \ref{th:semi.ergodic} the proof of the ergodicity of
      $\{x(t)\}_{t \geq 0}$ is equivalent to show that $\{x(t)\}_{t \geq 0}$ is strong Feller, irreducible and satisfies the Lyapunov condition \eqref{eq:lyapunov.initial}. In what follows we will validate these properties one by one. Thanks to $\text{Rank}(\bar{Q}) = n$, the strong Feller property of $\{ x(t)\}_{t \geq 0}$ follows immediately by
      \cite[Proposition 2.3.2]{cerrai2001second}.

      To show the irreducibility of $\{x(t)\}_{t \geq 0}$,
      we denote $G(x(t)) := \Lambda x(t) + g(x(t))$ in \eqref{eq:sf.two} to get
       \begin{equation}
             \label{eq:sf.three}
             \diff{x(t)}
       =
             G(x(t)) \diff{t}
             +
             \bar{Q} \diff{\beta(t)},
             \quad\forall\, t > 0.
       \end{equation}
       Let $y, y^{+} \in \R^{n}$, $\delta, t > 0$ be arbitrary
       and denote the solution of \eqref{eq:sf.three} with initial
       value $x(0) = y$ by $\{x(t,y)\}_{t \geq 0}$. By the definition of irreducibility,
       it suffices to prove that
       %consider the probability of hitting an open ball in
       %$ H_{N} $ with center $ x_{0} $ and radius $ r > 0 $, that is
       \begin{equation}\label{eq:sf.four}
             \P(| x(t,y) - y^{+} | < \delta) > 0,\quad\forall\, t > 0.
       \end{equation}
       Here and below, we denote $(\cdot,\cdot)$ to be the usual Euclidean inner product in $\R^n$ and $|\cdot|$ be the corresponding norm in $\R^n$, or the Frobenius matrix norm in $\R^{n \times n}$. To show \eqref{eq:sf.four}, we follow the idea stemmed from \cite{mattingly2002ergodicity} and consider the associated control problem
       \begin{equation}
             \label{eq:sf.five}
             \frac{\diff{\bar{x}(t)}}{\diff{t}}
       =
             G(\bar{x}(t))
             +
             \bar{Q} \frac{\diff{U(t)}}{\diff{t}},
             \quad\forall\, t > 0.
       \end{equation}
       Then for every fixed $t > 0$, we can find a control function
       $U \in C^{1}([0,t];\R^{n})$ with $U(0) = 0$ such
       that \eqref{eq:sf.five} is
       satisfied and $\bar{x}(0) = y, \bar{x}(t) = y^{+}$.
       This can be achieved by polynomial interpolation between the end
       points using a linear polynomial in time with vector coefficients
       in $\R^{n}$ and by the invertibility of matrix $\bar{Q}$.
       The integral forms of \eqref{eq:sf.three}
       and \eqref{eq:sf.five} show that
       \begin{equation*}
             x(s,y) - \bar{x}(s)
       =
             \int_{0}^{s} G(x(r,y)) - G(\bar{x}(r)) \diff{r}
             +
             \bar{Q} (\beta(s) - U(s)),
             \quad
             \forall\, s \in [0,t].
       \end{equation*}
       Note that the event
       $
           \Omega_{t}^{\varepsilon}
           :=
           \{
               \omega \in \Omega
               :
               \sup\limits_{0 \leq s \leq t}
               | \beta(s)(\omega) - U(s) |
           \leq
               \varepsilon
           \}
       $
       occurs with positive probability for any $\varepsilon > 0$ and $t > 0$,
       since the Wiener measure of any such tube is positive
       (see \cite[Lemma 3.4]{mattingly2002ergodicity}).
       Observing that $G$ is global Lipschitz continuous
       because of \eqref{eq:F.derivative.one}, one sees that
       \begin{equation*}
             | x(s,y)(\omega) - \bar{x}(s) |
       \leq
             L_{G} \int_{0}^{s} | x(r,y)(\omega) - \bar{x}(r) | \diff{r}
             +
             \varepsilon |\bar{Q}|,
             \quad \forall\, s \in [0,t], \omega \in \Omega_{t}^{\varepsilon}.
       \end{equation*}
       By Gronwall's inequality, %\cite[Theorem~1.8.1]{mao2007stochastic},
       we have
       $
             | x(s,y)(\omega) - \bar{x}(s) |
       \leq
             \varepsilon |\bar{Q}| e^{sL_{G}}
       $ for all $s \in [0,t]$ and $\omega \in \Omega_{t}^{\varepsilon}$.
       Choosing $s = t$ and
       $\varepsilon = \delta / (| \bar{Q} | e^{tL_{G}} )$ and observing $\bar{x}(t) = y^{+}$,
       \eqref{eq:sf.four} holds and the irreducibility follows.

       Now we are in a position to verify that $\{x(t)\}_{t \geq 0}$ satisfies the Lyapunov condition \eqref{eq:lyapunov.initial}. For this, we choose a Borel function $V(x) = | x |^{2}, x \in \R^{n}$.
%       it suffices to show that the level sets $K_{a}$ defined by \eqref{eq:lyapunov.level} are compact for any $a > 0$ and that \eqref{eq:lyapunov.initial} holds.
       Because of the continuity of norm and the Heine--Borel theorem in the finite-dimensional space $\R^{n}$, it follows that the level sets $K_{a}$ are compact for all $a > 0$. By \eqref{eq:sf.two} and the It\^{o} formula, we have
       \begin{equation}\label{eq:Lc.one}
       \begin{split}
             \diff{| x(t) |^{2}}
       =&
             2 \big( x(t) , \Lambda x(t) \big) \diff{t}
             +
             2 \big( x(t) , g(x(t)) \big) \diff{t}
             +
             \sum\limits_{i=1}^{n} q_{i} \diff{t}
             +
             2 \big( x(t) , \bar{Q} \dif{\beta(t)} \big)
       \\\leq&
             -2 \lambda_{1} |x(t)|^{2} \diff{t}
             +
             2 \big( x(t) , g(x(t)) \big) \diff{t}
             +
             \sum\limits_{i=1}^{n} q_{i} \diff{t}
             +
             2 \big( x(t) , \bar{Q} \dif{\beta(t)} \big).
       \end{split}
       \end{equation}
       Recall the notation $x(t)$ and $g(x(t))$, we use the projection property of $P_n$ and \eqref{eq:disipativity} to get
       %self-adjointness and dempotency
       \begin{equation*}
       \begin{split}
             \big( x(t) , g(x(t)) \big)
       =&
             \sum\limits_{i=1}^{n}
             \big\langle X^{n}(t) , e_{i} \big\rangle
             \big\langle P_{n} F(X^{n}(t)) , e_{i} \big\rangle
       =
             \big\langle
                 X^{n}(t) , P_{n} F(X^{n}(t))
             \big\rangle
       \\=&
             \big\langle
                 X^{n}(t) , F(X^{n}(t))-F(0)
             \big\rangle
             +
             \big\langle X^{n}(t) , F(0) \big\rangle
%       \leq
%             L_{F}|X^{n}(t)|^{2}  + |X^{n}(t)||F(0)| \%
       \\\leq&
             L_{F}  \|X^{n}(t)\|^{2}
             +
             \tfrac{\lambda_{1}-L_{F}}{2} \|X^{n}(t)\|^{2}
             +
             \tfrac{\|F(0)\|^{2}}{2(\lambda_{1}-L_{F})}
       \\=&
             \tfrac{\lambda_{1}+L_{F}}{2} \|X^{n}(t)\|^{2}
             +
             \tfrac{\|F(0)\|^{2}}{2(\lambda_{1}-L_{F})},
       \end{split}
       \end{equation*}
       where we used the weighted Young inequality $ab \leq \varepsilon{a^{2}} + \frac{b^{2}}{4\varepsilon}$ for all $a,b \in \R$ with $\varepsilon = \frac{\lambda_{1}-L_{F}}{2} > 0$.
       Observing
       $
             \|X^{n}(t) \|^{2}
%       =
%             \sum\limits_{i=1}^{n} | x_{i}(t) |^{2}
       =
             |x(t)|^{2}
       $
       because of \eqref{eq:sf.zero} and taking expectations
       on the both sides of \eqref{eq:Lc.one} show that
       \begin{equation*}
             \frac{\diff{\E \big[|x(t)|^{2}\big]}}
                  {\diff{t}}
       \leq
             -(\lambda_{1}-L_{F}) \E \big[|x(t)|^{2}\big]
             +
             \Big(
                   \tfrac{\|F(0)\|^{2}}{\lambda_{1}-L_{F}}
                   +
                   \sum\limits_{i=1}^{n} q_{i}
             \Big),
       \end{equation*}
       which leads to
       \begin{equation*}
       \begin{split}
             \E \big[ | x(t) |^{2} \big]
       \leq&
             e^{-(\lambda_{1}-L_{F})t}\E\big[|x(0)|^{2}\big]
             +
             \tfrac{1-e^{-(\lambda_{1}-L_{F})t}}{\lambda_{1}-L_{F}}
             \Big(
                   \tfrac{\|F(0)\|^{2}}{\lambda_{1}-L_{F}}
                   +
                   \sum\limits_{i=1}^{n} q_{i}
             \Big)
       \\\leq&
             \E\big[|x(0)|^{2}\big]
             +
             \tfrac{1}{\lambda_{1}-L_{F}}
             \Big(
                   \tfrac{\|F(0)\|^{2}}{\lambda_{1}-L_{F}}
                   +
                   \sum\limits_{i=1}^{n} q_{i}
             \Big).
       \end{split}
       \end{equation*}
       This means that $\{x(t)\}_{t \geq 0}$ satisfies the Lyapunov condition \eqref{eq:lyapunov.initial} and thus finishes the proof.
\end{proof}

\subsection{
%Weak error of solutions for spatial discretization
Weak spatial approximation error over long time
}
           \label{eq:weak.error.semi}

An important ingredient to obtain the time-independent weak error is the improved estimates on the derivatives of the solution of the associated Kolmogorov equation.
To show this, for any $n \in \N$ and $\Phi \in C_{b}^{2}(H,\R)$ we introduce the function $v^{n} \colon [0,\infty) \times H_{n} \to \R$ by
\begin{equation}
      \label{eq:kol.solu}
      v^{n}(t,y)
      =
      \E \big[\Phi(X^{n}(t,y))\big],
      \quad\forall\,
      t \geq 0,y \in H_{n},
\end{equation}
where $X^{n}(t,y)$ is the unique solution of \eqref{eq:SDE} with the initial value $ X_{0}^{n} = y$. Recall that $v^{n}(t,y)$ is continuously differentiable with respect to $t$ and continuously twice differentiable with respect to $y$ and acts as the unique strict solution of the following Kolmogorov equation
\begin{equation}\label{eq:kol}
\left\{
\begin{array}{l}
      \hspace{-0.5em}
             \frac{\partial{v^{n}(t,y)}}{\partial{t}}
         =
             \big\langle
             Dv^{n}(t,y)
             ,
             A_{n}y+P_{n}F(y)
             \big\rangle
             +
             \frac{1}{2}
             \tr\big\{
                       D^{2}v^{n}(t,y)
                       (P_{n}Q^{\frac{1}{2}})
                       (P_{n}Q^{\frac{1}{2}})^{*}
                \big\},\quad\forall\, t > 0,
      \\
      \hspace{-0.5em}
             v^{n}(0,y)
         =
             \Phi(y),
\end{array}
\right.
\end{equation}
under Assumptions~\ref{ass:main.ass} and \ref{ass:main.ass2}, see \cite[Theorem 9.16]{da1992stochastic}.
Here by a strict solution of \eqref{eq:kol} we mean a function
$v^{n} \in C_{b}^{1,2}([0,\infty) \times H_{n}, \R)$ such that \eqref{eq:kol} holds.
Moreover, by the Riesz representation theorem, we can always identify the first derivative $Dv^{n}(t,y)$ at $y \in H_{n}$
with an element in $H_{n}$
%such that
%$
%       Dv^{n}(t,y) \cdot g
%       =
%       \langle Dv^{n}(t,y),g \rangle
%$
%for all $g \in H_{n}$
and the second derivative $D^{2}v^{n}(t,y)$ at $y \in H_{n}$ with a bounded linear operator on $H_{n}$.
%such that
%$
%       D^{2}v^{n}(t,y) \cdot (g,k)
%       =
%       \langle D^{2}v^{n}(t,y)g , k \rangle
%$
%for all $g,k \in H_{n}$.

Repeating the proof of Propositions 5.1 and 5.2 in
\cite{brehier2014approximation} with slight changes and taking Assumptions \ref{ass:main.ass} and \ref{ass:main.ass2} into account,
we have the following regularity results on the derivatives of $v^{n}(t,y)$.
%which is an essential ingredient to obtain the weak error on finite time interval $[0,T]$ independent from time $T$.
%Here we cite Propositions~5.1 and 5.2 in~\cite{brehier2014approximation}, which reveals that derivatives of~\eqref{eq:kol} decrease exponentially in time $t$ and that the error constants is independent of $T$.

\begin{prn}[\textbf{Regularity of $Dv^{n}(t,y)$ and $D^{2}v^{n}(t,y)$}]
      \label{pro:regularityDvntx}
      Suppose that Assumptions \ref{ass:main.ass} and \ref{ass:main.ass2} hold. Let $v^{n}(t,y)$ be defined by \eqref{eq:kol.solu} with $\Phi \in C_{b}^{2}(H,\R)$. Then for any $\gamma \in [0,1)$ and $\gamma_{1},\gamma_{2} \in [0,1)$ with $\gamma_{1}+\gamma_{2} < 1$ there exist $C_{\gamma}, C_{\gamma_{1},\gamma_{2}},\tilde{c}>0$ such that
      \begin{align}\label{eq:kol.solu.FIRST}
            \|(-A_{n})^{\gamma}Dv^{n}(t,y) \|
      \leq&
            C_{\gamma}(1+t^{-\gamma})e^{-\tilde{c}t},
      \\
            \label{eq:kol.solu.SECOND}
            \|
              (-A_{n})^{\gamma_{2}}
              D^{2}v^{n}(t,y)
              (-A_{n})^{\gamma_{1}}
            \|_{\mathcal{L}(H_{n})}
      \leq&
            C_{\gamma_{1},\gamma_{2}}
            (1+t^{-\eta}+t^{-(\gamma_{1}+\gamma_{2})})e^{-\tilde{c}t}
      \end{align}
      for all $t \geq 0$ and $y \in H_n$, where the parameter $\eta$ comes from \eqref{eq:F.derivative.second}.
\end{prn}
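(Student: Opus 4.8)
The plan is to differentiate the flow $y\mapsto X^n(t,y)$ with respect to the initial datum and insert the resulting variation processes into the chain rule for $v^n(t,y)=\E[\Phi(X^n(t,y))]$; since \eqref{eq:SDE} is a finite-dimensional SDE with additive noise and $F$ is twice differentiable, this differentiation is justified and $v^n\in C_b^{1,2}$ as recalled above. Write $\eta^h(t):=D_yX^n(t,y)h$ for the first variation, which solves the random linear equation
$$
\diff{\eta^h(t)}=A_n\eta^h(t)\diff{t}+P_nF'(X^n(t,y))\eta^h(t)\diff{t},\qquad \eta^h(0)=h,
$$
and $\zeta^{h,k}(t):=D_y^2X^n(t,y)(h,k)$ for the second variation, which solves the same linear equation with $\zeta^{h,k}(0)=0$ and the extra inhomogeneity $P_nF''(X^n(t,y))(\eta^h(t),\eta^k(t))$. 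The chain rule then gives
$$
Dv^n(t,y)\cdot h=\E\big[\langle\Phi'(X^n(t,y)),\eta^h(t)\rangle\big],\qquad
D^2v^n(t,y)(h,k)=\E\big[\langle\Phi''(X^n(t,y))\eta^h(t),\eta^k(t)\rangle\big]+\E\big[\langle\Phi'(X^n(t,y)),\zeta^{h,k}(t)\rangle\big].
$$
Since $\Phi\in C_b^2(H,\R)$, it remains to prove decay-plus-smoothing bounds for $\eta^h$ and $\zeta^{h,k}$ that are uniform in $n$, which will follow from the $n$-uniform hypotheses \eqref{eq:A.and.Q.n}--\eqref{eq:semigroup.smooth.two} together with the one-sided Lipschitz bound.

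Differentiating \eqref{eq:disipativity} yields $\langle\psi,F'(\varphi)\psi\rangle\le L_F\|\psi\|^2$, so testing the $\eta^h$-equation against $\eta^h(t)$ and using $\langle A_n\varphi,\varphi\rangle\le-\lambda_1\|\varphi\|^2$ gives $\tfrac{\diff{}}{\diff{t}}\|\eta^h(t)\|^2\le-2(\lambda_1-L_F)\|\eta^h(t)\|^2$, hence the contraction $\|\eta^h(t)\|\le e^{-(\lambda_1-L_F)t}\|h\|$; this already proves \eqref{eq:kol.solu.FIRST} for $\gamma=0$ with $\tilde c=\lambda_1-L_F$. For $\gamma\in(0,1)$ I would argue by duality: using self-adjointness of $(-A_n)^\gamma$ on $H_n$ and the Riesz identification, $\|(-A_n)^\gamma Dv^n(t,y)\|=\sup_{k\ne0}|Dv^n(t,y)\cdot k|/\|(-A_n)^{-\gamma}k\|$, so it suffices to show that the variational flow smooths negative-order data, namely $\|\eta^k(t)\|\le C_\gamma(1+t^{-\gamma})e^{-\tilde ct}\|(-A_n)^{-\gamma}k\|$. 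In the mild formulation of the $\eta^k$-equation the leading term obeys $\|E_n(t)k\|=\|(-A_n)^\gamma E_n(t)(-A_n)^{-\gamma}k\|\le Ct^{-\gamma}e^{-\frac{\lambda_1}{2}t}\|(-A_n)^{-\gamma}k\|$ by \eqref{eq:semigroup.smooth.one}, and the Duhamel term is controlled with \eqref{eq:F.derivative.one} by a singular Gronwall argument; splitting $\int_0^t=\int_0^{t/2}+\int_{t/2}^t$, on $[0,t/2]$ the kernel $(t-s)^{-\gamma}e^{-\frac{\lambda_1}{2}(t-s)}$ is $\le Ct^{-\gamma}e^{-\frac{\lambda_1}{4}t}$, while on $[t/2,t]$ the already-established $H$-norm decay of $\eta^k$ provides a factor $e^{-\tilde ct/2}$ and $\int_0^\infty r^{-\gamma}e^{-\frac{\lambda_1}{2}r}\diff{r}<\infty$ by \eqref{eq:Gamma}, which preserves exponential decay while producing only the admissible singularity $t^{-\gamma}$.

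For the second derivative I would treat the two expectations separately. In the first one, applying the negative-order smoothing estimate just obtained with exponents $\gamma_1$ and $\gamma_2$ gives $\|\eta^{k_1}(t)\|\,\|\eta^{k_2}(t)\|\le Ct^{-(\gamma_1+\gamma_2)}e^{-2\tilde ct}\|(-A_n)^{-\gamma_1}k_1\|\,\|(-A_n)^{-\gamma_2}k_2\|$, which together with $\|\Phi''\|_0<\infty$ yields the $t^{-(\gamma_1+\gamma_2)}$ contribution to \eqref{eq:kol.solu.SECOND}. In the second one, writing $\zeta^{k_1,k_2}$ in mild form, moving $(-A_n)^\eta$ onto the semigroup and using \eqref{eq:F.derivative.second.n},
$$
\|\zeta^{k_1,k_2}(t)\|\le L\int_0^t\|(-A_n)^\eta E_n(t-s)\|_{\mathcal{L}(H_n)}\,\|(-A_n)^{-\eta}P_nF''(X^n(s,y))(\eta^{k_1}(s),\eta^{k_2}(s))\|\diff{s}\le C\int_0^t(t-s)^{-\eta}e^{-\frac{\lambda_1}{2}(t-s)}s^{-(\gamma_1+\gamma_2)}e^{-2\tilde cs}\diff{s}\,\|(-A_n)^{-\gamma_1}k_1\|\,\|(-A_n)^{-\gamma_2}k_2\|,
$$
where the inner integral is a finite Beta-type integral precisely because $\eta<1$ and $\gamma_1+\gamma_2<1$, and for $t\le1$ equals $Ct^{1-\eta-(\gamma_1+\gamma_2)}\le Ct^{-\eta}$ (since $\gamma_1+\gamma_2\le1$), while for $t\ge1$ it is bounded with exponential decay after the usual split at $t/2$; together with $\|\Phi'\|_0<\infty$ this produces the $t^{-\eta}$ contribution. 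The main obstacle throughout is purely a matter of bookkeeping but is genuinely delicate: one must simultaneously extract the correct integrable singularity at $s=t$ (and at $s=0$ for the products $\|\eta^{k_1}(s)\|\,\|\eta^{k_2}(s)\|$) and retain a true exponential rate $e^{-\tilde ct}$ uniformly over $t\ge0$ — this is what forces the repeated splitting at $t/2$ and the full exhaustion of the room $\gamma,\gamma_1+\gamma_2,\eta<1$ — and every constant must be tracked to be independent of $n$, which the $n$-uniform estimates \eqref{eq:A.and.Q.n}--\eqref{eq:semigroup.smooth.two} ensure.
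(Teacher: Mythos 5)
Your overall architecture is exactly the one the paper relies on (it does not reprove the proposition but cites Br\'ehier's Propositions 5.1--5.2, whose proofs proceed via the first and second variation processes, the chain rule for $v^n$, the energy estimate from the differentiated one-sided Lipschitz condition, and the semigroup smoothing). However, there is a genuine gap at the crucial step where smoothing and exponential decay must be combined for $\eta^k$. Your singular Gronwall on the mild formulation only sees the two-sided bound $\|F'\|\leq L$, and the paper explicitly allows $L\geq\lambda_1$ (that is the whole point of assuming $L_F<\lambda_1$ rather than $L<\lambda_1$); a Gronwall closure on $\eta^k(t)=E_n(t)k+\int_0^t E_n(t-s)P_nF'(X^n(s,y))\eta^k(s)\,\mathrm{d}s$ then yields at best a factor $e^{(L-\lambda_1)t}$, i.e.\ no decay. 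Your attempted rescue on $[t/2,t]$ invokes ``the already-established $H$-norm decay of $\eta^k$'', but that bound is $\|\eta^k(s)\|\leq e^{-(\lambda_1-L_F)s}\|k\|$, measured in $\|k\|$; replacing $\|k\|$ by $\|(-A_n)^{-\gamma}k\|$ costs a factor $\lambda_n^{\gamma}$ and destroys the $n$-uniformity, while using the target bound itself is circular and, as noted, the bootstrap does not close when $L$ is large. The standard repair, which uses exactly the ingredients you already have, is a two-regime argument: for $t\leq 1$ prove only the smoothing estimate $\|\eta^k(t)\|\leq Ct^{-\gamma}\|(-A_n)^{-\gamma}k\|$ by singular Gronwall (the constant $e^{CL}$ is harmless on a bounded interval); for $t\geq 1$ use the cocycle property of the linearized flow $S(t,s)$ (so $\eta^k(t)=S(t,1)\eta^k(1)$) together with the pathwise contraction $\|S(t,1)\|_{\mathcal{L}(H_n)}\leq e^{-(\lambda_1-L_F)(t-1)}$ coming from your energy estimate, then insert the $t=1$ smoothing bound. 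The statement and proof should really be organized around the estimate $\|S(t,s)(-A_n)^{\rho}\|_{\mathcal{L}(H_n)}\leq C(1+(t-s)^{-\rho})e^{-\tilde c(t-s)}$, $\rho\in[0,1)$, since both derivative bounds reduce to it.

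A second, related gap is in your treatment of $\zeta^{k_1,k_2}$: the mild formulation with respect to $E_n$ that you write down omits the term $\int_0^t E_n(t-s)P_nF'(X^n(s,y))\zeta^{k_1,k_2}(s)\,\mathrm{d}s$; if instead you intend the Duhamel formula with respect to the perturbed evolution family $S(t,s)$, then moving $(-A_n)^{\eta}$ ``onto the semigroup'' requires precisely the smoothing-with-decay estimate for $S(t,s)$ mentioned above, which you have not established at that point. Once that estimate is proved (by the same two-regime argument), your computation of the Beta-type integral and the resulting $t^{-\eta}$ and $t^{-(\gamma_1+\gamma_2)}$ contributions, as well as the $\Phi''$ term, go through and are uniform in $n$; without it, the claimed exponential decay for large $t$ in both the first- and second-derivative bounds is unsupported.
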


With the above preparations, we can prove the following time-independent weak error.

\begin{thm}
      [\textbf{Spatial weak error}]
      \label{th:spatial}
      Suppose that Assumptions \ref{ass:main.ass} and \ref{ass:main.ass2} hold. Let $\{X(t)\}_{t\geq0}$ and $\{X^{n}(t)\}_{t\geq0}$
      be given by \eqref{eq:SPDE} and \eqref{eq:SDE}, respectively. Then for any $T > 0$, $n \in \N$
      and $\Phi \in C_{b}^{2}(H,\R)$ there exists $C>0$ independent of $T,n$ such that
      \begin{equation}\label{eq:saptial.order}
         \big|\E[\Phi(X(T))] - \E[\Phi(X^{n}(T))]\big|
         \leq
         C\lambda_{n}^{-\beta+\epsilon}.
      \end{equation}
\end{thm}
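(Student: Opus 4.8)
The plan is to use the standard weak-error-via-Kolmogorov-equation argument, but carried out in a \emph{time-independent} fashion by exploiting the exponential decay factors $e^{-\tilde c t}$ in Proposition \ref{pro:regularityDvntx}. Fix $T>0$, $n\in\N$ and $\Phi\in C_b^2(H,\R)$. I would introduce $u^n(t,y) := v^n(t, P_n y) = \E[\Phi(X^n(t,P_n y))]$, or more directly work with $v^n$ as the solution of the Kolmogorov equation \eqref{eq:kol} associated with the $H_n$-valued SDE \eqref{eq:SDE}. The key object is the function $t\mapsto \E\big[v^n(T-t, X(t))\big]$ on $[0,T]$, which interpolates between $\E[\Phi(X^n(T))]$ at $t=0$ and $\E[\Phi(X(T))]$ at $t=T$ (using $v^n(0,\cdot)=\Phi$ and $v^n(T,P_nX_0)=\E[\Phi(X^n(T))]$, together with $X(0)=X_0$, $X^n(0)=P_nX_0$). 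Applying the It\^o formula to $v^n(T-t,X(t))$ — which requires care since $X(t)$ is only a mild solution, so I would first argue on the spectral Galerkin level $X^m$ with $m\geq n$ and pass to the limit $m\to\infty$, or invoke a known It\^o-formula-for-mild-solutions lemma — and using that $v^n$ solves \eqref{eq:kol}, the generator terms coming from the drift $AX(t)+F(X(t))$ and diffusion $Q^{1/2}$ cancel against $\partial_t v^n$ \emph{except} for the mismatch between the full operators $(A,F,Q)$ acting on $H$ and their projected/finite-dimensional counterparts $(A_n,P_nF,P_nQ)$ appearing in \eqref{eq:kol}.

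Carrying this out, the weak error collapses to an integral over $[0,T]$ of three remainder terms: (i) a term involving $\langle Dv^n(T-t,X^n(t)\text{-ish}), (A-A_n)(\cdot)\rangle$-type contributions, i.e. the action of $Dv^n$ on $(I-P_n)$-components; (ii) a term $\langle Dv^n, (I-P_n)F(X(t))\rangle$ or $\langle Dv^n(T-t,P_nX(t)), P_nF(X(t)) - P_nF(P_nX(t))\rangle$; and (iii) a trace term $\frac12\tr\{D^2v^n\,[(Q^{1/2})(Q^{1/2})^* - (P_nQ^{1/2})(P_nQ^{1/2})^*]\}$ capturing the noise truncation. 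Each of these I would bound using Proposition \ref{pro:regularityDvntx}: on term (iii), write $(I-P_n)$ against $D^2v^n$ and use \eqref{eq:kol.solu.SECOND} with $\gamma_1$ (or $\gamma_2$) close to the admissible range, pulling out a factor $\|(-A)^{-\gamma_1}(I-P_n)\|_{\mathcal L(H)}\le \lambda_{n+1}^{-\gamma_1}\le C\lambda_n^{-\gamma_1}$ and absorbing $\|(-A)^{(\gamma_1+(\beta-1)/2)}Q^{1/2}\|_{\mathcal L_2}<\infty$ via \eqref{eq:A.and.Q}; choosing $\gamma_1$ as close to $\beta$ as the constraint $\gamma_1+\gamma_2<1$ and the integrability of $t^{-\eta-\gamma_1-\gamma_2}$ near $t=0$ permit yields the rate $\lambda_n^{-\beta+\epsilon}$. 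For terms (i)–(ii), the factor $(I-P_n)$ likewise produces $\lambda_n^{-\beta+\epsilon}$ after using \eqref{eq:kol.solu.FIRST}, \eqref{eq:F.derivative.one.delta} (to handle $F$ in the negative-order norm), and the moment bound \eqref{eq:moment.bound.n} on $\E\|X^n(t)\|^2$ (and the analogous bound for $\E\|X(t)\|_1^2$ or $\E\|X(t)\|^2$, which follows from the same dissipativity computation). Crucially, the time integral $\int_0^T (1+ (T-t)^{-\gamma}) e^{-\tilde c(T-t)}\,dt \le \int_0^\infty (1+s^{-\gamma})e^{-\tilde c s}\,ds < \infty$ is \emph{finite uniformly in $T$} thanks to $\gamma<1$ and the Gamma-function bound \eqref{eq:Gamma}, which is exactly why the constant $C$ does not depend on $T$.

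I expect the main obstacle to be twofold. First, the rigorous justification of the It\^o formula applied to $v^n(T-t,\cdot)$ along the mild solution $X(t)$: one must either regularize (apply to the Galerkin projection $X^m$ for $m>n$, obtain the identity with explicit remainders, then let $m\to\infty$ using the smoothing bounds on $v^n$ and strong convergence $X^m\to X$) or cite an It\^o formula for mild solutions; handling the singularity of $t^{-\gamma}$ at the endpoint $t=T$ within this limiting procedure needs a small truncation/dominated-convergence argument. Second, the bookkeeping of which fractional power of $-A$ to extract in each remainder term so that (a) enough negative power hits $(I-P_n)$ to give $\lambda_n^{-\beta}$, (b) the residual positive powers on $D^2v^n$ or $Dv^n$ stay within the ranges $\gamma<1$, $\gamma_1+\gamma_2<1$ of Proposition \ref{pro:regularityDvntx}, and (c) the resulting time singularities $t^{-\gamma}$, $t^{-\eta}$, $t^{-(\gamma_1+\gamma_2)}$ remain integrable near $0$ — this is where the loss of $\epsilon$ enters and must be tracked carefully, but it is otherwise routine. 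The remaining estimates are straightforward applications of Lemma \ref{lea:semigroupsmooth}, Assumption \ref{ass:main.ass2}, and Theorem \ref{th:exist.unique}.
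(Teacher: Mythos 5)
Your overall engine — the Galerkin-level Kolmogorov equation, It\^o's formula, and time-uniform bounds obtained from the $e^{-\tilde c t}$ decay in Proposition \ref{pro:regularityDvntx} combined with \eqref{eq:Gamma} — is the same as the paper's, and your explanation of why the constant is independent of $T$ is correct. However, there is a genuine gap in how you set up the interpolation. You follow $t\mapsto \E[v^n(T-t,X(t))]$, which in effect means $v^n(T-t,P_nX(t))$ since $v^n$ lives on $H_n$. At $t=T$ this equals $\E[\Phi(P_nX(T))]$, \emph{not} $\E[\Phi(X(T))]$, so your scheme leaves over the endpoint term $\E[\Phi(X(T))]-\E[\Phi(P_nX(T))]$; bounding it by $\|\Phi'\|_0\,\E\|(I-P_n)X(T)\|$ only yields roughly $\lambda_n^{-\beta/2+\epsilon}$ uniformly in $T$ (the stochastic convolution is uniformly bounded only in $\dot H^{\gamma}$ for $\gamma<\beta$), i.e.\ half of the claimed weak order. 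Similarly, in your formulation the drift mismatch is $P_n\big(F(X(t))-F(P_nX(t))\big)$, which carries no explicit projection factor; recovering the rate would require \eqref{eq:F.derivative.one.delta} at intermediate points between $X(t)$ and $P_nX(t)$, hence a uniform-in-time bound on $\E\|X(t)\|_1^2$, which is unavailable when $\beta<1$ (space-time white noise), while the crude Lipschitz bound again gives only half the order. Incidentally, the It\^o formula is not the real obstacle: $P_nX(t)$ is a genuine finite-dimensional It\^o process and the diffusion parts match exactly, so your term (iii) does not even occur in your own setup; the two terms that do occur are precisely the ones you cannot close at order $\lambda_n^{-\beta+\epsilon}$.

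The paper's proof sidesteps both difficulties with a swapped two-level argument: fix $k\ge n$, write $\E[\Phi(X^k(T))]-\E[\Phi(X^n(T))]$ using the \emph{fine-level} Kolmogorov function $v^k$ and It\^o's formula along the \emph{coarse} process $X^n$ (everything finite-dimensional; $A_kX^n(t)=A_nX^n(t)$, so the linear drifts cancel exactly). Then every remainder carries the factor $P_n-P_k$ explicitly — against $F(X^n(t))$ in $I_1$ and against $Q^{1/2}$ in the trace terms $I_2,I_3$ — and the initial-value mismatch is handled by Taylor's formula, \eqref{eq:kol.solu.FIRST} and the smooth datum $X_0\in\dot H^{2\beta}$. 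Only afterwards does one let $k\to\infty$, where merely the qualitative convergence $\E[\Phi(X^k(T))]\to\E[\Phi(X(T))]$ is needed, with no rate at all. If you reorganize your argument along these lines (Kolmogorov function at the fine auxiliary level, process at level $n$, limit taken in the auxiliary index), your time-uniform estimates go through essentially as you describe; as written, the endpoint and nonlinear-drift terms cannot be closed at the stated order.
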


\begin{proof}
      We set $k \in \N  \cap [n,\infty)$ and decompose the spatial
      approximation error as follows
      \begin{equation}
      \begin{split}
            \label{eq:saptial.decompose}
            \big|
                   \E[ \Phi(X(T)) ]
                   -
                   \E[ \Phi(X^{n}(T)) ]
            \big|
      \leq&
            \big|
                   \E[ \Phi(X(T)) ]
                   -
                   \E[ \Phi(X^{k}(T)) ]
            \big|
      +
            \big|
                   \E[ \Phi(X^{k}(T)) ]
                   -
                   \E[ \Phi(X^{n}(T)) ]
            \big|.
      \end{split}
      \end{equation}
      Taking $k \to \infty$ in~\eqref{eq:saptial.decompose} and
      employing the fact that $X^{k}(T)$ converges to $X(T)$ in
      mean square sense (see,~e.g.,~\cite[Lemma~A.1]{wang2013weak}) lead to
      \begin{equation}
            \label{eq:saptial.limsup}
            \big|
                    \E[ \Phi(X(T)) ]
                    -
                    \E[ \Phi(X^{n}(T)) ]
            \big|
      \leq
            \limsup\limits_{k \to \infty}
            \big|
                    \E[ \Phi(X^{k}(T)) ]
                    -
                    \E[ \Phi(X^{n}(T)) ]
            \big|.
      \end{equation}
      By \eqref{eq:kol.solu} and \eqref{eq:kol}, it follows that
      \begin{equation}\label{eq:spatial.limsup.decompose}
      \begin{split}
            %&
            \E[\Phi(X^{k}(T))] - \E[\Phi(X^{n}(T))]
      =
%            \E\big[ \Phi(X^{k}(T,X^{k}_{0})) \big]
%            -
%            \E\big[ \Phi(X^{n}(T)) \big]
%    \\=&
%            \E\big[ v^{k}(T,X^{k}_{0}) \big]
%            -
%            \E\big[ v^{k}(0,X^{n}(T)) \big]
%    \\=&
            \E[v^{k}(T,X^{k}_{0})]
            -
            \E[ v^{k}(T,X^{n}_{0})]
      +
            \E[ v^{k}(T,X^{n}_{0})]
            -
            \E[ v^{k}(0,X^{n}(T))].
      \end{split}
      \end{equation}
      Before we calculate the first term on the right hand
      side of \eqref{eq:spatial.limsup.decompose}, we
      note that
      \begin{equation}
      \begin{split}
            \label{eq:spatial.limsup.inequality}
            \| (P_{k} - P_{n}) v \|
%      =
%            \| (-A)^{-\beta} P_{k}(I - P_{n}) v \|
      \leq&
            \| P_{k} \|_{\mathcal{L}(H_{k})}
            \| (I - P_{n}) (-A)^{-\beta} \|_{\mathcal{L}(H_{k})}
            \| (-A)^{\beta}v \|
%      \\\leq&
%             \sup\limits_{i \geq n+1}
%             \lambda_{i}^{-\beta}
%             \| v \|
%      =
%             \lambda_{n+1}^{-\beta} \| v \|
      \leq
             \lambda_{n}^{-\beta} \| v \|_{2\beta},
             \quad\forall\,v \in \dot{H}^{2\beta}.
      \end{split}
      \end{equation}
%%%%      \begin{equation}
%%%%      \begin{split}
%%%%            \| (I - P^{N}) (-A)^{-\beta} \|_{\mathcal{L}(H)}
%%%%      %=
%%%%      %      \sup\limits_{i \geq 1}
%%%%      %      \| (I - P^{N}) (-A)^{-\beta} e_{i}\|
%%%%      =
%%%%             \sup\limits_{i \geq N+1}
%%%%             \lambda_{i}^{-\beta}
%%%%      =
%%%%             \lambda_{N+1}^{-\beta}
%%%%      \leq
%%%%             \lambda_{N}^{-\beta}.
%%%%      \end{split}
%%%%      \end{equation}
      We then use Taylor's formula, \eqref{eq:kol.solu.FIRST},
      \eqref{eq:spatial.limsup.inequality} and $X_{0} \in \dot{H}^{2\beta}$ to obtain
      \begin{equation}
      \begin{split}\label{eq:spatial.limsup.first}
            \big|
                \E[ v^{k}(T,X^{k}_{0}) ]
                -&
                \E[ v^{k}(T,X^{n}_{0}) ]
             \big|
%      \\=&
%            |
%                \E\big[
%                         v^{k}(T,X^{k}_{0})
%                         -
%                         v^{k}(T,X^{n}_{0})
%                  \big]
%            |
      =
            \Big|
            \int_{0}^{1}
                \E\big[
                             \big\langle
                             Dv^{k}
                             (
                                  T,X^{n}_{0}
                                  +
                                  r(X^{k}_{0}-X^{n}_{0})
                             )
                             ,
                             X^{k}_{0}-X^{n}_{0}
                             \big\rangle
                  \big]
            \diff{r}
            \Big|
%      \\=&
%            \Big|
%                \E\Big[
%                         \int_{0}^{1}
%                             Dv^{k}
%                             \big(
%                                  T,X^{n}_{0}
%                                  +
%                                  r(X^{k}_{0}-X^{n}_{0})
%                             \big)
%                             (-A)^{\beta}
%                             \cdot
%                             (-A)^{-\beta}
%                             (X^{k}_{0}-X^{n}_{0})
%                         \diff{r}
%                  \Big]
%            \Big|
      \\\leq&
            \int_{0}^{1}
            \E\big[
              \|
                  Dv^{k}
                  \big(
                          T,X^{n}_{0}
                          +
                          r(X^{k}_{0}-X^{n}_{0})
                  \big)
              \|
              \|(P_{k} - P_{n}) X_{0} \|
              \big]
            \diff{r}
%      \\\leq&
%            \E\Big[
%                      \int_{0}^{1}
%                         \underbrace{
%                         \|
%                                Dv^{k}
%                                \big(
%                                     T,X^{n}_{0}
%                                     +
%                                     r(X^{k}_{0}-X^{n}_{0})
%                                 \big)
%                                 (-A)^{\beta}
%                         \|}_{\leq C(1+T^{-\beta})e^{-cT}}
%                         \underbrace{
%                         \| (-A)^{-\beta} (X^{k}_{0} - X^{n}_{0}) \|
%                         }_{\leq \lambda_{n}^{-\beta} \| X_{0} \|}
%                      \diff{r}
%              \Big]
%      \\\leq&
%            C (1+T^{-\beta})
%            \underbrace{
%            e^{-cT}}_{\leq 1, \forall T \geq 0.}
%            \lambda_{n}^{-\beta} \| X_{0} \|
%      \\\leq&
%            C (1+T^{-\beta}) \lambda_{n}^{-\beta} \| X_{0} \|
      \\\leq&
            C\lambda_{n}^{-\beta} e^{-\tilde{c}T} \| X_{0} \|_{2\beta}
        \leq
            C\lambda_{n}^{-\beta}.
      \end{split}
      \end{equation}
      Now we process to consider the second term on the right hand
      side of \eqref{eq:spatial.limsup.decompose}. Applying the It\^{o}
      formula to $v^{k}(T-t,X^{n}(t)),\forall\,t \in [ 0, T]$, one sees that
       \begin{equation}\label{eq:saptial.limsup.second.ito}
       \begin{split}
             \E[v^{k}(0,X^{n}(T))]
              &-
              \E[v^{k}(T,X^{n}_{0})]
       =
                \int_{0}^{T}
                    \E\bigg[-
                    \frac{\partial{v^{k}(T-t,X^{n}(t))}}
                         {\partial{t}}\bigg]
                \diff{t}
          \\&+
                \int_{0}^{T}
                \E\big[
                    \big\langle
                    Dv^{k}(T-t,X^{n}(t))
                    ,
                    A_{n} X^{n}(t) + P_{n}F(X^{n}(t))
                    \big\rangle
                \big]
                \diff{t}
          \\&+
              \frac{1}{2}
                \int_{0}^{T}
                \E\big[
                    \tr\big\{
                               D^{2}v^{k}(T-t,X^{n}(t))
                               (P_{n}Q^{\frac{1}{2}})
                               (P_{n}Q^{\frac{1}{2}})^{*}
                       \big\}
                \big]
                \diff{t}.
       \end{split}
       \end{equation}
       Substituting \eqref{eq:kol} into \eqref{eq:saptial.limsup.second.ito} and using $A_{n} X^{n}(t) - A_{k}X^{n}(t) = 0$ for $k \in \N \cap [n,\infty)$ enable us to get
       \begin{equation}\label{eq:I.one.two.three}
       \begin{split}
             \E[v^{k}(0,&X^{n}(T))]
              -
              \E[v^{k}(T,X^{n}_{0})]
       =
%              \underbrace{
                \int_{0}^{T}
                \E\big[
                  \big\langle
                    Dv^{k}(T-t,X^{n}(t))
                    ,
                    ( P_{n} - P_{k} )  F(X^{n}(t))
                    \big\rangle
                \big]
                \diff{t}
%               }_{=:I_{1}}
          \\&+
%              \underbrace{
              \frac{1}{2}
              \int_{0}^{T}
                  \E\big[
                    \tr\big\{
                               D^{2}v^{k}(T-t,X^{n}(t))
                               (P_{n}-P_{k})Q^{\frac{1}{2}}
                               \big(P_{n}Q^{\frac{1}{2}}\big)^{*}
                       \big\}
                \big]
                \diff{t}
%               }_{=:I_{2}}
          \\&+
%              \underbrace{
              \frac{1}{2}
                \int_{0}^{T}
                \E\big[
                    \tr\big\{
                               D^{2}v^{k}(T-t,X^{n}(t))
                               \big(P_{k}Q^{\frac{1}{2}}\big)
                               \big((P_{n}-P_{k})Q^{\frac{1}{2}}\big)^{*}
                       \big\}
                \big]
                \diff{t}
%               }_{=:I_{3}}
       :=I_{1}+I_{2}+I_{3}.
       \end{split}
       \end{equation}
       In the sequel we will estimate $I_1, I_2, I_3$ separately. By \eqref{eq:kol.solu.FIRST}, %\eqref{eq:spatial.limsup.inequality},
       \eqref{eq:F.derivative.one}, \eqref{eq:moment.bound.n} and \eqref{eq:Gamma}, we have
       \begin{equation}\label{eq:I.one}
       \begin{split}
             |I_{1}|
       \leq&
              \int_{0}^{T}
                  \E\big[
                    \|(-A_{k})^{1-\epsilon}
                       Dv^{k}(T-t,X^{n}(t))
                    \|
                    \|
                       (-A_{k})^{-1+\epsilon}
                       \big(  ( P_{n} - P_{k} )  F(X^{n}(t)) \big)
                    \|
                  \big]
              \diff{t}
       \\\leq&
              C\lambda_{n}^{-1+\epsilon}
              \int_{0}^{T}
                  \big(1 + (T-t)^{-1+\epsilon}\big)
                  e^{-\tilde{c}(T-t)}
                  \E\big[ \|F(X^{n}(t))\| \big]
              \diff{t}
       \leq
              C\lambda_{n}^{-1+\epsilon}.
       \end{split}
       \end{equation}
       Here we emphasize that the error constant $C$ in the last term of \eqref{eq:I.one} is independent of time $T$ and thus \eqref{eq:I.one} is essentially different from the analogue estimation (69) in \cite{wang2016weak}, which allows the error constant $C$ to depend on $T$. Such estimation may cause explosion as time $T$ goes to infinity and is no longer working for \eqref{eq:344}. So here and below we sharp similar estimations appeared in \cite{wang2016weak} via some new arguments and techniques to adapt our purpose.
       Concerning $I_{2}$,
       we can derive from \eqref{eq:.operator.Gamma.one} and \eqref{eq:.operator.Gamma.three} that
       \begin{equation*}
       \begin{split}
             |I_{2}|
       =&
             \frac{1}{2}
             \Big|
               \int_{0}^{T}
               \E\big[
                   \tr\big\{
                               (-A_{k})^{\frac{1-\beta}{2}}
                               D^{2}v^{k}(T-t,X^{n}(t))
                               (P_{n}-P_{k})Q^{\frac{1}{2}}
                               \big((-A_{k})^{\frac{\beta-1}{2}}
                                    P_{n}Q^{\frac{1}{2}}\big)^{*}
                      \big\}
               \big]
               \diff{t}
             \Big|
       \\\leq&
               \frac{1}{2}
               \int_{0}^{T}
               \E\big[
                   \big\|
                             (-A_{k})^{\frac{1-\beta}{2}}
                             D^{2}v^{k}(T-t,X^{n}(t))
                             (-A_{k})^{\frac{1+\beta}{2}-\epsilon}
                   \big\|_{\mathcal{L}(H_{k})}
                 \big]
               \diff{t}
                   \\&~~~~~~~~~\cdot
                   \big\|
                             (-A_{k})^{-\frac{1+\beta}{2}+\epsilon}
                             (P_{n}-P_{k})
                             (-A_{k})^{\frac{1-\beta}{2}}
                   \big\|_{\mathcal{L}(H_{k})}
                   \\&~~~~~~~~~\cdot
                   \big\|
                             (-A_{k})^{\frac{\beta-1}{2}}P_{k}
                             Q^{\frac{1}{2}}
                             \big((-A_{k})^{\frac{\beta-1}{2}}
                                   P_{n}Q^{\frac{1}{2}}\big)^{*}
                   \big\|_{\mathcal{L}_{1}(H_{k})}.
       \end{split}
       \end{equation*}
       Noticing that
       $
               \big\|
                          (-A_{k})^{\frac{\beta-1}{2}}
                          P_{n}Q^{\frac{1}{2}}
               \big\|_{\mathcal{L}_{2}(H,H_{k})}
               =
               \big\|
                          (-A_{n})^{\frac{\beta-1}{2}}
                          P_{n}Q^{\frac{1}{2}}
               \big\|_{\mathcal{L}_{2}(H,H_{n})}
       $
       and applying
       \eqref{eq:kol.solu.SECOND},
       %\eqref{eq:spatial.limsup.inequality},
       \eqref{eq:.operator.Gamma.two},
       \eqref{eq:A.and.Q.n} and \eqref{eq:Gamma}
       bring about
       \begin{equation}
             \label{eq:I.two}
       \begin{split}
             |I_{2}|
       \leq&
               C
               \int_{0}^{T}
                   \big(1+(T-t)^{-\eta}
                          +(T-t)^{-(1-\epsilon)}\big)
                               e^{-\tilde{c}(T-t)}
               \diff{t}
               \big\|
                         (-A_{k})^{-\beta+\epsilon}
                         (P_{n}-P_{k})
               \big\|_{\mathcal{L}(H_{k})}
                \\&\cdot
               \big\|
                          (-A_{k})^{\frac{\beta-1}{2}}
                          P_{k}Q^{\frac{1}{2}}
               \big\|_{\mathcal{L}_{2}(H,H_{k})}
               \big\|
                          (-A_{n})^{\frac{\beta-1}{2}}
                          P_{n}Q^{\frac{1}{2}}
               \big\|_{\mathcal{L}_{2}(H,H_{n})}
%       \\\leq&
%               C
%               \underbrace{
%               \big\|
%                         (-A_{K})^{-(\beta-\epsilon)}
%                         (P_{N}-P_{K})
%               \big\|_{\mathcal{L}(H_{K})}
%               }_{\leq \lambda_{N}^{-(\beta-\epsilon)}}
%               \cdot
%               \underbrace{
%               \big\|
%                          (-A_{K})^{\frac{\beta-1}{2}}
%                          P_{N}Q^{\frac{1}{2}}
%               \big\|_{\mathcal{L}_{2}(H_{K})}^{2}
%               }_{\leq C}
       \leq
               C\lambda_{n}^{-\beta+\epsilon}.
       \end{split}
       \end{equation}
       Similarly to $I_{2}$, we can arrive at
       \begin{equation}
             \label{eq:I.three}
             |I_{3}|
       \leq
             C\lambda_{n}^{-\beta+\epsilon}.
       \end{equation}
       Inserting \eqref{eq:I.one}, \eqref{eq:I.two}
       and \eqref{eq:I.three} into \eqref{eq:I.one.two.three} gives
       \begin{equation*}\label{eq:I.onetwothree}
             \big|
             \E[v^{k}(0,X^{n}(T))]
             -
             \E[v^{k}(T,X^{n}_{0})]
             \big|
             \leq
             C\lambda_{n}^{-\beta+\epsilon}.
       \end{equation*}
       This together with \eqref{eq:spatial.limsup.first} and \eqref{eq:spatial.limsup.decompose} verifies the desired result \eqref{eq:saptial.order}.
\end{proof}

\subsection{Error of invariant measures for spatial discretization}
            \label{sec:order.semi}

%Now we state the first main result in this section.

\begin{thm}
      \label{th:order.measure.spatial}
       Suppose that Assumptions \ref{ass:main.ass} and \ref{ass:main.ass2} hold. Let $\nu$ and $\nu^{n}$ be the corresponding unique invariant measures of $\{X(t)\}_{t \geq 0}$ and $\{ X^{n}(t) \}_{t \geq 0}$, respectively. Then for any $T > 0,n \in \N$ and $\Phi \in C_{b}^{2}(H,\R)$ there exists $C>0$ independent of $T,n$ such that
      \begin{equation}
            \label{eq:order.measure.spatial}
            \Big|
            \int_{H} \Phi(y) \,\nu(\dif{y})
            -
            \int_{H_{n}} \Phi(y) \,\nu^{n}(\dif{y})
            \Big|
            \leq
            C\lambda_{n}^{-\beta+\epsilon}.
      \end{equation}
\end{thm}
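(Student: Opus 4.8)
The plan is to derive \eqref{eq:order.measure.spatial} from the ergodicity of $\{X(t)\}_{t\ge0}$ and $\{X^{n}(t)\}_{t\ge0}$ (Theorems~\ref{th:EueSPDEs} and~\ref{th:ergodic.two}) together with the time-independent spatial weak error of Theorem~\ref{th:spatial}. First I would express both invariant-measure integrals as long-time Ces\`aro averages along the trajectories started from the prescribed data: by ergodicity,
\begin{equation*}
      \int_{H}\Phi(y)\,\nu(\dif{y})
      =
      \lim_{T\to\infty}\frac1T\int_0^{T}\E\big[\Phi(X(t,X_{0}))\big]\diff{t},
      \qquad
      \int_{H_{n}}\Phi(y)\,\nu^{n}(\dif{y})
      =
      \lim_{T\to\infty}\frac1T\int_0^{T}\E\big[\Phi(X^{n}(t,X_{0}^{n}))\big]\diff{t}.
\end{equation*}
Granting these two identities, the triangle inequality gives, for every $T>0$,
\begin{equation*}
      \Big|\frac1T\int_0^{T}\E[\Phi(X(t))]\diff{t}-\frac1T\int_0^{T}\E[\Phi(X^{n}(t))]\diff{t}\Big|
      \le
      \frac1T\int_0^{T}\big|\E[\Phi(X(t))]-\E[\Phi(X^{n}(t))]\big|\diff{t}
      \le
      C\lambda_{n}^{-\beta+\epsilon},
\end{equation*}
where the last bound applies Theorem~\ref{th:spatial} at each time $t$, using the crucial fact that the constant there is independent of the time variable. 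Passing to the limit $T\to\infty$ in the left-hand side (which tends to $|\int_{H}\Phi\,\dif{\nu}-\int_{H_{n}}\Phi\,\dif{\nu^{n}}|$ by the two Ces\`aro-limit identities) then yields \eqref{eq:order.measure.spatial}.

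The remaining --- and main --- point is to justify the two Ces\`aro-limit identities for the \emph{fixed deterministic} initial data $X_{0}$ (and $X_{0}^{n}=P_{n}X_{0}$), rather than only in the $L^{2}(H,\nu)$-sense recorded in \eqref{eq:ergodic.one}. For this I would invoke the strong Feller and irreducibility properties established in the course of proving the ergodicity results: by Doob's theorem (see, e.g., \cite{da1996ergodicity,da2006introduction}) the invariant measure is strongly mixing, so $\E[\Phi(X(t,x))]\to\int_{H}\Phi\,\dif{\nu}$ as $t\to\infty$ for every $x\in H$ and every $\Phi\in C_{b}(H)$, and analogously for the finite-dimensional system $\{X^{n}(t)\}_{t\ge0}$ via Theorem~\ref{th:ergodic.two}; the Ces\`aro averages then converge to the same limits. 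Since the elements of $C_{b}^{2}(H,\R)$ considered here need not be bounded but are globally Lipschitz (hence of at most linear growth), I would extend this convergence from $C_{b}(H)$ to $C_{b}^{2}(H,\R)$ by a truncation argument, controlling the tails with the uniform second-moment bounds $\sup_{t\ge0}\E[\|X^{n}(t)\|^{2}]\le C$ of Theorem~\ref{th:exist.unique} (together with their counterpart $\sup_{t\ge0}\E[\|X(t)\|^{2}]\le C$ for the mild solution, obtained either directly by the argument of Theorem~\ref{th:exist.unique} or by passing $n\to\infty$), which in addition guarantee that $\int_{H}\Phi\,\dif{\nu}$ and $\int_{H_{n}}\Phi\,\dif{\nu^{n}}$ are finite. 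I expect this upgrade --- from the $L^{2}(H,\nu)$-ergodic statement to pointwise-in-initial-datum convergence, valid on the class of linearly growing test functions --- to be the only delicate step; once it is in place, \eqref{eq:order.measure.spatial} follows at once from the time-independent estimate of Theorem~\ref{th:spatial}.
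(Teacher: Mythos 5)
Your proposal is correct and follows essentially the same route as the paper's own proof: both invariant-measure integrals are represented as long-time Ces\`aro averages, the difference of the averages is bounded uniformly in $T$ by the time-independent spatial weak error of Theorem \ref{th:spatial}, and one lets $T\to\infty$. The additional care you take in upgrading the $L^{2}(H,\nu)$-ergodic statement to convergence from the fixed initial datum $X_{0}$ and in handling the linear growth of $\Phi\in C_{b}^{2}(H,\R)$ addresses a point the paper passes over silently; only note that for the infinite-dimensional equation the paper's ergodicity rests on the dissipativity condition \eqref{eq:dissipativity.corollary} (exponential mixing in the sense of \cite{da1996ergodicity}) rather than on a strong Feller/irreducibility argument, so that, rather than Doob's theorem, is the cleaner justification of the pointwise limit for $\{X(t)\}_{t\geq0}$, while your Doob-type argument is exactly what the paper's proof of Theorem \ref{th:ergodic.two} provides for $\{X^{n}(t)\}_{t\geq0}$.
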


\begin{proof}
      From Theorems \ref{th:EueSPDEs} and \ref{th:ergodic.two}, we know $\{X(t)\}_{t\geq0}$ and $\{ X^{n}(t) \}_{t \geq 0}$ are ergodic. This together with the definition of ergodicity implies \eqref{eq:ergodic.one} and
      \begin{align}
%            \lim\limits_{T\to\infty}
%            \frac{1}{T}
%            \int_{0}^{T} \E\big[\Phi(X(t))\big] \diff{t}
%            =&
%            \int_{H} \Phi(y) \,\nu(\dif{y}),
%            \quad
%            \forall~\Phi \in C_{b}^{2}(H,\R),
            %\\
            \label{eq:ergodic.two}%\eqref{eq:ergodic.two}
            \lim\limits_{T\to\infty}
            \frac{1}{T}
            \int_{0}^{T} \E\big[\Phi(X^{n}(t))\big] \diff{t}
            =&
            \int_{H_{n}} \Phi(y) \,\nu^{n}(\dif{y})
            \quad\text{in}~L^{2}(H,\nu),
            \quad
            \forall\,\Phi \in C_{b}^{2}(H,\R),
      \end{align}
       and hence
%      The desired assertion \eqref{eq:order.measure.spatial} is
%      an immediate consequence of \eqref{eq:ergodic.one},
%      \eqref{eq:ergodic.two} and \eqref{eq:saptial.order}.
      \begin{equation}\label{eq:344}
      \begin{split}
            &
            \Big|
            \int_{H} \Phi(y) \,\nu(\dif{y})
            -
            \int_{H_{n}} \Phi(y) \,\nu^{n}(\dif{y})
            \Big|
%      \\=&
%            \Big|
%            \lim\limits_{T\to\infty}
%            \frac{1}{T}
%            \int_{0}^{T} \E\big[\Phi(X(t))\big] \diff{t}
%            -
%            \label{eq:ergodic.two}
%            \lim\limits_{T\to\infty}
%            \frac{1}{T}
%            \int_{0}^{T} \E\big[\Phi(X^{n}(t))\big] \diff{t}
%            \Big|
%      \\=&
%            \Big|
%            \lim\limits_{T\to\infty}
%            \frac{1}{T}
%            \int_{0}^{T}
%            \E\big[\Phi(X(t))\big]
%            -
%            \E\big[\Phi(X^{n}(t))\big]
%            \diff{t}
%            \Big|
      \leq
            \lim\limits_{T\to\infty}
            \frac{1}{T}
            \int_{0}^{T}
            \big|
            \E\big[\Phi(X(t))\big]
            -
            \E\big[\Phi(X^{n}(t))\big]
            \big|
            \diff{t}
%      \\\leq&
%            C\lambda_{N}^{-\beta+\epsilon}
%            \lim\limits_{T\to\infty}
%            \frac{1}{T}
%            \int_{0}^{T}
%            (1+t^{-\beta})
%            \diff{t}
%      \\=&
%            C\lambda_{N}^{-\beta+\epsilon}
%            \lim\limits_{T\to\infty}
%            \frac{1}{T}
%            \big(T + \frac{T^{1-\beta}}{1-\beta}\big)
%      \\=&
%            C\lambda_{N}^{-\beta+\epsilon}
%            \lim\limits_{T\to\infty}
%            \big(1 + \frac{T^{-\beta}}{1-\beta}\big)
%      \\=&
%            C\lambda_{N}^{-\beta+\epsilon}
%            \big(
%                   1
%                   +
%                   \frac{1}{1-\beta}
%                   \lim\limits_{T\to\infty}T^{-\beta}
%            \big)
      \leq
            C\lambda_{n}^{-\beta+\epsilon},
      \end{split}
      \end{equation}
      where \eqref{eq:saptial.order}  was used in the last step.
\end{proof}

\begin{rek}\label{re:I}
Note that two important classes of noise are included here. One is the space-time white noise in the case $Q = I$ and the other is the trace class
noise in the case $\tr(Q) < \infty$. For the space-time white noise, it is well-known that \eqref{eq:A.and.Q} is fulfilled with $\beta < \frac{1}{2}$ in space dimension $d = 1$ \cite[Remark 3.2]{kovacs2010finite}. In this situation our result indicates that the convergence order between $\nu$ and $\nu^n$ is $1-\epsilon$ for arbitrarily small $\epsilon>0$. For the trace class noise, \eqref{eq:A.and.Q} is satisfied with $\beta = 1$ \cite[Remark 3.2]{kovacs2010finite} and our result implies that the convergence order between $\nu$ and $\nu^n$ is $2-\epsilon$ for arbitrarily small $\epsilon>0$,
in space dimension $d = 1$.
\end{rek}

%\section{Temporal discretization}
\section{Spatio-temporal full discretization and its ergodicity}

We will apply an exponential Euler scheme to \eqref{eq:SDE} to obtain a spatio-temporal full discretization approximation
$\{Y_{m}^{n}\}_{m \in \N}$ and give some regularity estimates in Subsection \ref{sec:discrete.full}.
Subsection \ref{sec:ergodicity.full} shows that $\{Y_{m}^{n}\}_{m \in \N}$
is ergodic with a unique invariant measure $\nu_{\tau}^{n}$ via the theory of geometric ergodicity of Markov chain.
Based on a weak error representation formula, the time-independent weak error is investigated in Subsection \ref{sec:weak.error.full}.
Armed with the ergodicity and weak error estimate, we finally obtain the error between invariant measures $\nu^{n}$ and $\nu_{\tau}^{n}$ in Subsection \ref{sec:order.full}.

Throughout this section, we need the following notation.
%Let %$\tau_{0} \in (0,\infty)$ and let $\tau \in (0,\tau_{0})$
Let $\tau > 0 $ be the uniform time stepsize. Further let $m,M \in \N$ and set $t_{m} = m\tau$ and $T =M\tau$. Moreover,
the generic constant $C$ must be independent of the spatial dimension $n$ and the final time $T =M\tau$
but may depend on $X_{0}$, $\Phi$, $L_{F}$, $L$ and other parameters.
%However, to simplify the expressions, the dependence of $C$ on these parameters is not mentioned during the proofs.

%\subsection{Exponential Euler time-stepping scheme}
\subsection{Exponential Euler scheme}
           \label{sec:discrete.full}

Now we approximate \eqref{eq:SDE} in time by the exponential Euler scheme
\begin{equation}
      \label{eq:EES}
%\left\{
%\begin{array}{l}
%      \hspace{-0.5em}
      Y_{m}^{n}
      =
      E_{n}(\tau)Y_{m-1}^{n}
      +
      {\tau}E_{n}(\tau)P_{n}F(Y_{m-1}^{n})
      +
      E_{n}(\tau)P_{n}\Delta{W_{m-1}^{Q}},
%      \\
%      \hspace{-0.5em}
      \quad
      Y_{0}^{n} = X_{0}^{n},
%\end{array}
%\right.
\end{equation}
where $Y_{m}^{n}$ is an approximation of $X^{n}(t_{m})$ and
$
      E_{n}(\tau)P_{n}\Delta{W_{m-1}^{Q}}
  :=
      \int_{t_{m-1}}^{t_{m}}
         E_{n}(\tau)P_{n}
      \diff{W^{Q}(s)}
$
is well defined since
$ E_{n}(\tau)P_{n}Q^{\frac{1}{2}} \colon H \to H_{n} $
is a Hilbert--Schmidt operator.

The following lemma concerns the regularity of $\{Y_{m}^{n}\}_{m \in \N}$ over long time.

\begin{lea}\label{lem:bound.momnet}
      Suppose that Assumptions \ref{ass:main.ass} and \ref{ass:main.ass2} hold. Let $\tau < \tau_0 \leq \frac{\lambda_1-L_F}{4L^2}$
      and let $\{Y_{m}^{n}\}_{m \in \N}$ be given by \eqref{eq:EES}. Then for any
      %$X_{0} \in \dot{H}^{\beta}$,
      %$\tau_{0} \in (0,\infty)$, $\tau \in (0,\tau_{0})$,
      $n, m \in \N$ and $\gamma \in [0,\frac{\beta}{2})$, there exists $C > 0$ independent of $n,m$ such that
      \begin{equation}\label{eq:bound.momnet}
            \E\big[\|(-A_{n})^{\gamma}Y_{m}^{n}\|^{2}\big]
            \leq
            C.
      \end{equation}
\end{lea}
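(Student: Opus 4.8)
The plan is to telescope the one-step recursion \eqref{eq:EES} into a discrete variation-of-constants formula, split the numerical solution into a drift part plus a discrete stochastic convolution, and estimate the two pieces separately. The reason for the splitting is the main obstacle: estimating each noise increment in the raw recursion produces a per-step contribution of size $\tau^{\beta}$ which, after summing against the geometric decay rate $1-e^{-\Theta(\tau)}\sim\tau$, would blow up like $\tau^{\beta-1}$ for $\beta<1$; instead the full stochastic convolution must be estimated in one shot, using the smoothing \eqref{eq:semigroup.smooth.one}, while the drift part is treated by an energy recursion in which the dissipativity $L_F<\lambda_1$ and the step-size restriction $\tau_0\le\tfrac{\lambda_1-L_F}{4L^2}$ enter. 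By induction on \eqref{eq:EES} and the semigroup property $E_n(\tau)E_n(t_l)=E_n(t_{l+1})$ one obtains
\[
Y_m^n=E_n(t_m)Y_0^n+\tau\sum_{j=0}^{m-1}E_n(t_{m-j})P_nF(Y_j^n)+\mathcal{O}_m^n,\qquad
\mathcal{O}_m^n:=\sum_{j=0}^{m-1}\int_{t_j}^{t_{j+1}}E_n(t_{m-j})P_n\diff{W^Q(s)}.
\]

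First I would bound $\E[\|(-A_n)^{\gamma}\mathcal{O}_m^n\|^2]$ for every $\gamma\in[0,\beta/2)$. By the It\^o isometry over the disjoint subintervals, by \eqref{eq:.operator.Gamma.three} and \eqref{eq:A.and.Q.n} to pull out the factor $(-A_n)^{(\beta-1)/2}P_nQ^{1/2}$, and by \eqref{eq:semigroup.smooth.one} applied with the nonnegative exponent $\gamma+\tfrac{1-\beta}{2}$,
\[
\E\big[\|(-A_n)^{\gamma}\mathcal{O}_m^n\|^2\big]
=\tau\sum_{l=1}^m\|(-A_n)^{\gamma}E_n(t_l)P_nQ^{1/2}\|_{\mathcal{L}_2(H,H_n)}^2
\le C\,\tau\sum_{l=1}^m t_l^{-(2\gamma+1-\beta)}e^{-\lambda_1 t_l}.
\]
Since $\gamma<\beta/2$ forces the exponent $\alpha:=2\gamma+1-\beta$ into $[0,1)$, the $l=1$ term is $\le\tau_0^{\,1-\alpha}$ and, $s\mapsto s^{-\alpha}e^{-\lambda_1 s}$ being decreasing, the remaining terms form a lower Riemann sum of $\int_0^\infty s^{-\alpha}e^{-\lambda_1 s}\diff s<\infty$ (cf. \eqref{eq:Gamma}); hence $\E[\|(-A_n)^{\gamma}\mathcal{O}_m^n\|^2]\le C$ uniformly in $n,m$.

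Next set $\bar Y_m^n:=Y_m^n-\mathcal{O}_m^n$, which satisfies the pathwise recursion $\bar Y_m^n=E_n(\tau)\big(\bar Y_{m-1}^n+\tau P_nF(\bar Y_{m-1}^n+\mathcal{O}_{m-1}^n)\big)$ with $\bar Y_0^n=P_nX_0$. For the $L^2$-moment ($\gamma=0$) I would use $\|E_n(\tau)\|_{\mathcal{L}(H_n)}\le e^{-\lambda_1\tau}$, expand the square, write $\langle\bar Y_{m-1}^n,F(\bar Y_{m-1}^n+\mathcal{O}_{m-1}^n)\rangle$ as $\langle\bar Y_{m-1}^n,F(\bar Y_{m-1}^n)-F(0)\rangle$ (controlled by \eqref{eq:disipativity}) plus a Lipschitz-in-$\mathcal{O}$ term and an $F(0)$ term, all absorbed by weighted Young inequalities, and bound $\|P_nF(\bar Y_{m-1}^n+\mathcal{O}_{m-1}^n)\|^2$ by \eqref{eq:F.derivative.one}. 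Combining $e^{-2\lambda_1\tau}$ with the gain $2\tau L_F$ and using $1+x\le e^x$ gives a contraction factor $e^{-\tau[\frac32(\lambda_1-L_F)-4\tau L^2]}\le e^{-\frac\tau2(\lambda_1-L_F)}$ exactly when $\tau\le\tau_0\le\tfrac{\lambda_1-L_F}{4L^2}$, while the remainder is $\le C\tau(1+\E[\|\mathcal{O}_{m-1}^n\|^2])\le C\tau$ by the previous step. Iterating $\E[\|\bar Y_m^n\|^2]\le e^{-\frac\tau2(\lambda_1-L_F)}\E[\|\bar Y_{m-1}^n\|^2]+C\tau$ and summing the geometric series (ratio $1-\Theta(\tau)$, so $C\tau/(1-e^{-\frac\tau2(\lambda_1-L_F)})=O(1)$) yields $\E[\|\bar Y_m^n\|^2]\le\|X_0\|^2+C$ uniformly in $n,m$.

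Finally, for $\gamma\in(0,\beta/2)$ I would apply $(-A_n)^{\gamma}$ to the mild formula $\bar Y_m^n=E_n(t_m)P_nX_0+\tau\sum_{j=0}^{m-1}E_n(t_{m-j})P_nF(\bar Y_j^n+\mathcal{O}_j^n)$, take $L^2(\Omega;H_n)$-norms by Minkowski's inequality, use $(-A_n)^{\gamma}P_nX_0=(-A)^{\gamma}P_nX_0$ with $X_0\in\dot H^{2\gamma}$ (legitimate since $2\gamma<\beta\le\max(2\beta,1)$), estimate $\|(-A_n)^{\gamma}E_n(t_{m-j})\|_{\mathcal{L}(H_n)}\le Ct_{m-j}^{-\gamma}e^{-\lambda_1 t_{m-j}/2}$ via \eqref{eq:semigroup.smooth.one}, and bound $\|F(\bar Y_j^n+\mathcal{O}_j^n)\|_{L^2(\Omega;H)}\le\|F(0)\|+L\big(\|\bar Y_j^n\|_{L^2}+\|\mathcal{O}_j^n\|_{L^2}\big)\le C$ by the two uniform moment bounds already obtained. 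Since $\gamma<1$, the resulting time sum $\tau\sum_{l=1}^m t_l^{-\gamma}e^{-\lambda_1 t_l/2}$ is again uniformly bounded by the Riemann-sum comparison, so $\|(-A_n)^{\gamma}\bar Y_m^n\|_{L^2}\le C$; combining with the $\mathcal{O}_m^n$ estimate via $\E[\|(-A_n)^{\gamma}Y_m^n\|^2]\le2\E[\|(-A_n)^{\gamma}\bar Y_m^n\|^2]+2\E[\|(-A_n)^{\gamma}\mathcal{O}_m^n\|^2]$ gives \eqref{eq:bound.momnet}. The only delicate points are the sharp bookkeeping of the discrete sums near $l=1$ (which is where $\gamma<\beta/2$ is used) and verifying that the energy-recursion contraction factor stays strictly below $1$ uniformly in $\tau\le\tau_0$.
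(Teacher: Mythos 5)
Your proposal is correct and follows essentially the same route as the paper: split off the discrete stochastic convolution $\mathcal{O}_m^n$ and bound it via the It\^o isometry together with the smoothing estimate \eqref{eq:semigroup.smooth.one} and \eqref{eq:A.and.Q.n}, run a contraction (energy) recursion for $\bar Y_m^n=Y_m^n-\mathcal{O}_m^n$ using \eqref{eq:disipativity}, \eqref{eq:F.derivative.one} and the restriction $\tau\le\tau_0\le\frac{\lambda_1-L_F}{4L^2}$, and then bootstrap to the $(-A_n)^{\gamma}$-norm through the discrete variation-of-constants formula. The only differences (splitting the nonlinearity around $F(\bar Y_{m-1}^n)$ rather than $F(\mathcal{O}_{m-1}^n)$, and applying $(-A_n)^{\gamma}$ to $\bar Y_m^n$ plus $\mathcal{O}_m^n$ separately instead of to $Y_m^n$ directly) are cosmetic.
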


\begin{proof}
      We first prove the following inequalities
      \begin{equation}\label{eq:bound.momnet.two}
            \E\big[\|Y_{m}^{n}\|^{2}\big] \leq C,
            \quad
            \E\big[\|F(Y_{m}^{n})\|^{2}\big] \leq C.
      \end{equation}
      Indeed, it suffices to verify the first inequality of \eqref{eq:bound.momnet.two} since the second one is an immediate consequence of the first one and \eqref{eq:F.derivative.one}. Now we can easily rewrite \eqref{eq:EES} as
      \begin{equation}\label{eq:Y.m.n.rewrite}
      \begin{split}
            Y_{m}^{n}
      =&
%      E_{n}(\tau)Y_{m-1}^{n}
%      +
%      {\tau}E_{n}(\tau)P_{n}F(Y_{m-1}^{n})
%      +
%      E_{n}(\tau)P_{n}\Delta{W_{m-1}^{Q}}
%      \\=&
      E_{n}^{m}(\tau)Y_{0}^{n}
      +
      {\tau}\sum\limits_{i=0}^{m-1}E_{n}^{m-i}(\tau)
      P_{n}F(Y_{i}^{n})
      +
      \sum\limits_{i=0}^{m-1}E_{n}^{m-i}(\tau)
      P_{n}\Delta{W_{i}^{Q}}.
      \end{split}
      \end{equation}
      Set $\lfloor {s} \rfloor = t_{i}$ for $s \in [t_{i},t_{i+1})$, $i=0,1,\ldots,m-1$ and denote
      \begin{equation*}
            \mathcal{O}_{m}^{n}
      :=
            \sum\limits_{i=0}^{m-1}E_{n}^{m-i}(\tau)
            P_{n}\Delta{W_{i}^{Q}}
      =
            \int_{0}^{t_{m}}
                E_{n}(t_{m}-\lfloor {s} \rfloor)P_{n}
            \diff{W^{Q}(s)},
      \end{equation*}
      then by the It\^{o} isometry, \eqref{eq:semigroup.smooth.one},
      \eqref{eq:A.and.Q.n} and \eqref{eq:Gamma} we have
      \begin{equation}\label{eq:mathcal.O.m.n}
      \begin{split}
            \E\big[\|\mathcal{O}_{m}^{n}\|^{2}\big]
      =&
%            \E\Big[\Big\|
%              \int_{0}^{t_{m}}
%                  E_{n}(t_{m}-\lfloor {s} \rfloor)P_{n}
%              \diff{W^{Q}(s)}
%              \Big\|^{2}\Big]
%              \\&\because \text{It\^o isometry}
%      \\=&
            \int_{0}^{t_{m}}
                \big\|
                    E_{n}(t_{m}-\lfloor {s} \rfloor)P_{n}Q^{\frac{1}{2}}
                \big\|_{\mathcal{L}_{2}(H,H_{n})}^{2}
            \diff{s}
      \\\leq&
            \int_{0}^{t_{m}}
                \big\|
                    (-A_{n})^{\frac{1-\beta}{2}}
                    E_{n}(t_{m}-\lfloor {s} \rfloor)
                \big\|_{\mathcal{L}(H_{n})}^{2}
                \big\|
                    (-A_{n})^{\frac{\beta-1}{2}}P_{n}Q^{\frac{1}{2}}
                \big\|_{\mathcal{L}_{2}(H,H_{n})}^{2}
            \diff{s}
      \\\leq&
            C\int_{0}^{t_{m}}
                 (t_{m}-\lfloor {s} \rfloor)^{\beta-1}
                 e^{-\lambda_{1}(t_{m}-\lfloor {s} \rfloor)}
            \diff{s}
%      \\\leq&
%            C\int_{0}^{t_{m}}
%                 (t_{m}-s)^{\beta-1}
%                 e^{-\lambda_{1}(t_{m}-s)}
%             \diff{s}
%      \\\leq&
%            C\lambda_{1}^{-\beta}
%            \int_{0}^{\lambda_{1}t_{m}}
%                s^{\beta-1}e^{-s}
%            \diff{s}
%      \\\leq&
%            C\lambda_{1}^{-\beta}
%             \int_{0}^{\infty}
%                 s^{\beta-1}e^{-s}
%             \diff{s}
%      \\=&
        \leq    C.
      \end{split}
      \end{equation}
      This together with \eqref{eq:F.derivative.one} indicates
      \begin{equation}\label{eq:PnF.mathcal.O.m.n}
      \begin{split}
            \E\big[\|P_{n}F(\mathcal{O}_{m}^{n})\|^{2}\big]
      \leq&
%            2
%            \E\big[
%              \|P_{n}F(\mathcal{O}_{m}^{n})-P_{n}F(0)\|^{2}
%              \big]
%            +
%            2\|P_{n}F(0)\|^{2}
%      \\\leq&
            2L^{2}
            \E\big[\|\mathcal{O}_{m}^{n}\|^{2}\big]
            +
            2\|F(0)\|^{2}
%      \\\leq&
%            2L^{2}
%            C\lambda_{1}^{-\beta}\Gamma(\beta)
%            +
%            2\|F(0)\|^{2}
      \leq
            C.
      \end{split}
      \end{equation}
      Set
      %\begin{equation}
            %\label{eq:bar.Y.m.n}
            $\bar{Y}_{m}^{n} := Y_{m}^{n} - \mathcal{O}_{m}^{n}$,
      %\end{equation}
      it is obvious that $\bar{Y}_{0}^{n} = Y_{0}^{n}$ and
      \begin{equation*}
      \begin{split}
            \bar{Y}_{m}^{n}
      =&
            E_{n}^{m}(\tau)\bar{Y}_{0}^{n}
            +
            {\tau}\sum\limits_{i=0}^{m-1}E_{n}^{m-i}(\tau)
            P_{n}F(\bar{Y}_{i}^{n}+\mathcal{O}_{i}^{n}),
      \end{split}
      \end{equation*}
      which immediately gives
      \begin{equation*}
      \begin{split}
            \bar{Y}_{m}^{n}
      =&
            E_{n}(\tau)\bar{Y}_{m-1}^{n}
            +
            {\tau}E_{n}(\tau)
            P_{n}F(\bar{Y}_{m-1}^{n}+\mathcal{O}_{m-1}^{n}).
      \end{split}
      \end{equation*}
       According to $\| E_{n}(\tau) \|_{\mathcal{L}(H_{n})} \leq e^{-\lambda_{1}\tau}$
       and \eqref{eq:disipativity}, \eqref{eq:F.derivative.one}, we have
      \begin{equation*}
      \begin{split}
            \|\bar{Y}_{m}^{n}\|^{2}
      \leq&
            \|E_{n}(\tau)\|_{\mathcal{L}(H_{n})}^{2}
            \big(
            \|\bar{Y}_{m-1}^{n}\|^{2}
            +
            \tau^2\|P_{n}F(\bar{Y}_{m-1}^{n}+\mathcal{O}_{m-1}^{n})\|^{2}
            +
            2\tau\langle
                  \bar{Y}_{m-1}^{n}
                  ,
                  P_{n}F(\bar{Y}_{m-1}^{n}+\mathcal{O}_{m-1}^{n})
               \rangle
            \big)
      \\\leq&
            e^{-2\lambda_{1}\tau}
            \big(
            \|\bar{Y}_{m-1}^{n}\|^{2}
            +
            2\tau^2\|P_{n}F(\bar{Y}_{m-1}^{n}+\mathcal{O}_{m-1}^{n})
                   -P_{n}F(\mathcal{O}_{m-1}^{n})\|^{2}
            +
            2\tau^2\|P_{n}F(\mathcal{O}_{m-1}^{n})\|^{2}
            \\&+
            2\tau\langle
                  \bar{Y}_{m-1}^{n}
                  ,
                  P_{n}F(\bar{Y}_{m-1}^{n}+\mathcal{O}_{m-1}^{n})
                  -
                  P_{n}F(\mathcal{O}_{m-1}^{n})
               \rangle
            +
            2\tau\langle
                  \bar{Y}_{m-1}^{n}
                  ,
                  P_{n}F(\mathcal{O}_{m-1}^{n})
               \rangle
            \big)
      \\\leq&
            \big( 1 + 2{\tau}L_{F} + 2{\tau}^{2}L^{2}
            + \tfrac{\lambda_{1} - L_{F}}{2}{\tau} \big)
            e^{-2\lambda_{1}\tau} \|\bar{Y}_{m-1}^{n}\|^{2}
            +
            2\big({\tau}^{2}+\tfrac{\tau}{\lambda_{1} - L_{F}}\big)e^{-2\lambda_{1}\tau}
            \|P_{n}F(\mathcal{O}_{m-1}^{n})\|^{2},
      \end{split}
      \end{equation*}
      where we used the weighted Young inequality
      $ab \leq \varepsilon{a^{2}} + \frac{b^{2}}{4\varepsilon}$
      for all $a,b \in \R$ with $\varepsilon = \frac{\lambda_{1} - L_{F}}{4}> 0$. Observing $\tau < \tau_0 \leq \frac{\lambda_1-L_F}{4L^2}$, we have $2{\tau}^{2}L^{2} \leq \frac{\lambda_{1} - L_{F}}{2}{\tau}$
%      . This together with
%      $$
%            1 + 2{\tau}L_{F} + 2{\tau}^{2}L^{2}
%            + \tfrac{\lambda_{1} - L_{F}}{2}{\tau}
%            \leq
%                1 + (L_{F}+\lambda_{1}){\tau} %+ 2L_{F}^{2}{\tau}^{2}
%%              ,
%%            \quad
%%            \big(
%%                1 + (L_{F}+\lambda_{1}){\tau} %+ 2L_{F}^{2}{\tau}^{2}
%%            \big)
%            \leq
%            e^{ (\lambda_{1}+L_{F})\tau},
%%            \leq
%%             1,
%            \quad
%            e^{-(\lambda_{1}+L_{F})\tau} \leq 1,
%            \quad
%            \tau \in (0,\tau_{0})
%      $$
%      and~\eqref{eq:PnF.mathcal.O.m.n} results in
and consequently
$$
1 + 2{\tau}L_{F} + 2{\tau}^{2}L^{2}
+ \tfrac{\lambda_{1} - L_{F}}{2}{\tau}
\leq
1 + (L_{F}+\lambda_{1}){\tau} %+ 2L_{F}^{2}{\tau}^{2}
\leq
e^{ (\lambda_{1}+L_{F})\tau}
$$
due to the inequality $1+x \leq e^{x}$ for all $x \in \R$. Then
$e^{-(\lambda_{1}+L_{F})\tau} \leq \max\{1,e^{-(\lambda_{1}+L_{F})\tau_{0}}\}$
and \eqref{eq:PnF.mathcal.O.m.n} result in
      \begin{equation*}
      \begin{split}
            \E\big[\|\bar{Y}_{m}^{n}\|^{2}\big]
      \leq&
            e^{-(\lambda_{1}-L_{F}){\tau}}
            \E\big[\|\bar{Y}_{m-1}^{n} \|^{2}\big]
            +
            Ce^{-(\lambda_{1}-L_{F}){\tau}}{\tau}
      \\\leq&
            e^{-(\lambda_{1}-L_{F})m{\tau}}
            \E\big[\|\bar{Y}_{0}^{n}\|^{2}\big]
            +
            \frac{Ce^{-(\lambda_{1}-L_{F}){\tau}}{\tau}}
                 {1-e^{-(\lambda_{1}-L_{F}){\tau}}}
      \\\leq&
            \|X_{0}\|^{2}
            +
            \tfrac{C}{\lambda_{1}-L_{F}},
      \end{split}
      \end{equation*}
      which yields the first inequality of \eqref{eq:bound.momnet.two} because of
      \eqref{eq:mathcal.O.m.n} and $Y_{m}^{n}=\bar{Y}_{m}^{n} + \mathcal{O}_{m}^{n}$.
      With regard to \eqref{eq:bound.momnet}, we derive from \eqref{eq:Y.m.n.rewrite} that
      \begin{equation*}
      \begin{split}
            Y_{m}^{n}
      =&
%            E_{n}(\tau)Y_{m-1}^{n}
%            +
%            {\tau}E_{n}(\tau)P_{n}F(Y_{m-1}^{n})
%            +
%            E_{n}(\tau)P_{n}\Delta{W_{m-1}^{Q}}
%      \\=&
%            E_{n}^{m}(\tau)Y_{0}^{n}
%            +
%            {\tau}\sum\limits_{i=0}^{m-1}E_{n}^{m-i}(\tau)
%            P_{n}F(Y_{i}^{n})
%            +
%            \sum\limits_{i=0}^{m-1}E_{n}^{m-i}(\tau)
%            P_{n}\Delta{W_{i}^{Q}}
%     \\=&
            E_{n}(t_{m})Y_{0}^{n}
            +
            \int_{0}^{t_{m}}
                E_{n}(t_{m}-\lfloor {s} \rfloor)
                P_{n}F(Y_{\lfloor {s/\tau} \rfloor}^{n})
            \diff{s}
            +
            \int_{0}^{t_{m}}
                E_{n}(t_{m}-\lfloor {s} \rfloor)P_{n}
            \diff{W^{Q}(s)}.
      \end{split}
      \end{equation*}
      Using the It\^{o} isometry, \eqref{eq:semigroup.smooth.one}, \eqref{eq:bound.momnet.two}, \eqref{eq:A.and.Q} and $X_{0} \in \dot{H}^{\beta}$ leads to
      \begin{equation*}
      \begin{split}
            \|(-A_{n})^{\gamma}&Y_{m}^{n}\|_{L^2(\Omega,H_n)}
      \leq
            \|
                 (-A_{n})^{\gamma}
                 E_{n}(t_{m})Y_{0}^{n}
            \|_{L^2(\Omega,H_n)}
            +
               \Big\|
                   \int_{0}^{t_{m}}
                       (-A_{n})^{\gamma}
                       E_{n}(t_{m}-\lfloor {s} \rfloor)P_{n}
                   \diff{W^{Q}(s)}
               \Big\|_{L^2(\Omega,H_n)}
            \\&+
            \int_{0}^{t_{m}}
                   \big\|
                       (-A_{n})^{\gamma}
                       E_{n}(t_{m}-\lfloor {s} \rfloor)
                       P_{n}F(Y_{\lfloor {s/\tau} \rfloor}^{n})
                   \big\|_{L^2(\Omega,H_n)}
            \diff{s}
      \\\leq&
            \|E_{n}(t_{m})\|_{\mathcal{L}(H_{n})}
             \|(-A_{n})^{\gamma}Y_{0}^{n}\|
            +
            \int_{0}^{t_{m}}
                  \|
                      (-A_{n})^{\gamma}
                      E_{n}(t_{m}-\lfloor {s} \rfloor)
                  \|_{\mathcal{L}(H_{n})}
                  \|
                      P_{n}F(Y_{\lfloor {s/\tau} \rfloor}^{n})
                  \|_{L^2(\Omega,H_n)}
            \diff{s}
            \\&+
            \Big(
            \int_{0}^{t_{m}}
                   \|
                       (-A_{n})^{\gamma-\frac{\beta-1}{2}}
                       E_{n}(t_{m}-\lfloor {s} \rfloor)
                   \|_{\mathcal{L}(H_{n})}^{2}
                   \|
                       (-A_{n})^{\frac{\beta-1}{2}}
                       P_{n}Q^{\frac{1}{2}}
                   \|_{\mathcal{L}_{2}(H,H_{n})}^{2}
             \diff{s}
             \Big)^{\frac{1}{2}}
      \\\leq&
            C
            +
            C\int_{0}^{t_{m}}
                  (t_{m}-\lfloor {s} \rfloor)^{-\gamma}
                  e^{-\frac{\lambda_1}{2}(t_{m}-\lfloor {s} \rfloor)}
            \diff{s}
            +
            \Big(
            C\int_{0}^{t_{m}}
                (t_{m}-\lfloor {s} \rfloor)^{-2\gamma+\beta-1}
                e^{-\lambda_1(t_{m}-\lfloor {s} \rfloor)}
             \diff{s}
             \Big)^{\frac{1}{2}}.
      \end{split}
      \end{equation*}
      Observing $1-\gamma>0,-2\gamma+\beta>0$, we finally use \eqref{eq:Gamma} to obtain \eqref{eq:bound.momnet} and thus complete the proof.
\end{proof}

Furthermore, we can show the following result.

\begin{lea}
      \label{lem:bound.momnet.1/2}
      Suppose that Assumptions \ref{ass:main.ass} and \ref{ass:main.ass2} hold.
      Let $\tau < \tau_0 \leq \frac{\lambda_1-L_F}{4L^2}$ and let $\{Y_{m}^{n}\}_{m \in \N}$ be given by \eqref{eq:EES}.
      Then for any $n,m \in \N$ and arbitrarily small $\epsilon > 0$ there exists $C >0$ independent of $n,m$ such that
      \begin{equation}\label{eq:bound.momnet.1/2}
            \E\big[\|(-A_{n})^{\frac{1}{2}}Y_{m}^{n}\|^{2}\big]
            \leq
            C{\tau}^{\beta-\epsilon-1}.
      \end{equation}
\end{lea}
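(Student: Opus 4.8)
The plan is to proceed exactly as in the closing part of the proof of Lemma~\ref{lem:bound.momnet}: starting from the representation
\begin{equation*}
Y_m^n = E_n(t_m)Y_0^n + \int_0^{t_m} E_n(t_m-\lfloor s\rfloor)P_nF(Y_{\lfloor s/\tau\rfloor}^n)\diff s + \int_0^{t_m} E_n(t_m-\lfloor s\rfloor)P_n\diff{W^Q(s)},
\end{equation*}
apply $(-A_n)^{1/2}$ and estimate the $L^2(\Omega,H_n)$-norm of the three resulting terms. The first two will be bounded uniformly in $n$ and $m$, so that only the stochastic convolution contributes the singular factor $\tau^{\beta-\epsilon-1}$.

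For the initial-data term I would \emph{not} pull the smoothing $(-A_n)^{1/2}$ through $E_n(t_m)$, because $\|(-A_n)^{1/2}E_n(t_m)\|_{\mathcal{L}(H_n)}$ behaves like $t_m^{-1/2}$ near $t_m=\tau$, which is too singular when $\beta$ is close to $1$; instead I use $X_0\in\dot H^{\max(2\beta,1)}\subseteq\dot H^1$ together with the contractivity $\|E_n(t_m)\|_{\mathcal{L}(H_n)}\le e^{-\lambda_1 t_m}\le 1$ to get $\|(-A_n)^{1/2}E_n(t_m)Y_0^n\|\le\|X_0\|_1$. For the drift term I apply \eqref{eq:semigroup.smooth.one} with $\gamma=\tfrac12$ and the moment bound $\E[\|F(Y_i^n)\|^2]\le C$ of \eqref{eq:bound.momnet.two}, reducing it to $C\int_0^{t_m}(t_m-\lfloor s\rfloor)^{-1/2}e^{-\frac{\lambda_1}{2}(t_m-\lfloor s\rfloor)}\diff s = C\tau\sum_{j=1}^m(j\tau)^{-1/2}e^{-\frac{\lambda_1}{2}j\tau}$; since $x\mapsto x^{-1/2}e^{-\frac{\lambda_1}{2}x}$ is decreasing, this Riemann sum is dominated by $\int_0^\infty x^{-1/2}e^{-\frac{\lambda_1}{2}x}\diff x<\infty$ by \eqref{eq:Gamma}, hence bounded independently of $n,m$.

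The decisive step is the stochastic convolution. By the It\^{o} isometry its squared $L^2(\Omega,H_n)$-norm equals $\int_0^{t_m}\|(-A_n)^{1/2}E_n(t_m-\lfloor s\rfloor)P_nQ^{1/2}\|_{\mathcal{L}_2(H,H_n)}^2\diff s$. I split, for $r>0$,
\begin{equation*}
(-A_n)^{1/2}E_n(r)P_nQ^{1/2} = \big[(-A_n)^{1-\frac{\beta}{2}+\frac{\epsilon}{2}}E_n(r)\big]\big[(-A_n)^{\frac{\beta-1}{2}-\frac{\epsilon}{2}}P_nQ^{1/2}\big];
\end{equation*}
the second bracket is Hilbert--Schmidt since $(-A_n)^{\frac{\beta-1}{2}-\frac{\epsilon}{2}}P_nQ^{1/2}=(-A)^{-\epsilon/2}P_n(-A)^{\frac{\beta-1}{2}}Q^{1/2}$ with $(-A)^{-\epsilon/2}\in\mathcal{L}(H)$ and $\|(-A)^{\frac{\beta-1}{2}}Q^{1/2}\|_{\mathcal{L}_2(H)}<\infty$ by \eqref{eq:A.and.Q}, while \eqref{eq:semigroup.smooth.one} bounds the first bracket in operator norm by $Cr^{-(1-\frac{\beta}{2}+\frac{\epsilon}{2})}e^{-\frac{\lambda_1}{2}r}$. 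Consequently the integral is at most $C\int_0^{t_m}(t_m-\lfloor s\rfloor)^{-(2-\beta+\epsilon)}e^{-\lambda_1(t_m-\lfloor s\rfloor)}\diff s = C\tau^{\beta-1-\epsilon}\sum_{j=1}^m j^{-(2-\beta+\epsilon)}e^{-\lambda_1 j\tau}$, and since $2-\beta+\epsilon>1$ the series $\sum_{j\ge1}j^{-(2-\beta+\epsilon)}$ converges, so the sum is bounded uniformly in $m$ (equivalently in $T=M\tau$). Collecting the three estimates and using $\tau<\tau_0$ (so that every uniformly bounded term is also $\le C\tau^{\beta-\epsilon-1}$) yields \eqref{eq:bound.momnet.1/2}.

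The main obstacle is precisely this last integral, and it is also the reason the estimate is genuinely singular rather than uniform: one cannot afford to use the full noise regularity $\|(-A)^{\frac{\beta-1}{2}}Q^{1/2}\|_{\mathcal{L}_2(H)}<\infty$, since in the borderline case $\beta=1$ the resulting series $\sum_j j^{-1}$ diverges logarithmically in $m=T/\tau$ and would destroy time-independence. Deliberately sacrificing an arbitrarily small amount $\epsilon/2$ of spatial regularity of $Q^{1/2}$ restores summability uniformly in $m$, at the price of the factor $\tau^{-\epsilon}$.
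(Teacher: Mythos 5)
Your proposal is correct, but it follows a genuinely different route from the paper's. The paper proves Lemma \ref{lem:bound.momnet.1/2} in a single step: starting directly from the recursion \eqref{eq:EES}, it bounds $\E[\|(-A_{n})^{\frac12}Y_{m}^{n}\|^{2}]$ by three terms involving only $Y_{m-1}^{n}$, and the singular factor $\tau^{\beta-\epsilon-1}$ is produced by the previous iterate: one writes $(-A_{n})^{\frac12}E_{n}(\tau)Y_{m-1}^{n}=(-A_{n})^{\frac{1-(\beta-\epsilon)}{2}}E_{n}(\tau)\,(-A_{n})^{\frac{\beta-\epsilon}{2}}Y_{m-1}^{n}$ and combines the smoothing estimate \eqref{eq:semigroup.smooth.one} at $t=\tau$ with the uniform fractional moment bound \eqref{eq:bound.momnet} at level $\gamma=\frac{\beta-\epsilon}{2}$ from Lemma \ref{lem:bound.momnet}; the one-step drift and noise increments then only contribute $C\tau$ and $C\tau^{\beta-1}$. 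You instead unfold the scheme into its discrete variation-of-constants form (as in the last part of the proof of Lemma \ref{lem:bound.momnet}) and let the accumulated stochastic convolution carry the singularity, shifting an $\epsilon/2$ of spatial regularity from the noise via \eqref{eq:A.and.Q} and the boundedness of $(-A)^{-\epsilon/2}$ so that the sum $\sum_{j\ge1}j^{-(2-\beta+\epsilon)}e^{-\lambda_{1}j\tau}$ stays summable uniformly in $m$ (an issue only at the borderline $\beta=1$), while the initial-data and drift terms are uniformly bounded thanks to $X_{0}\in\dot H^{1}$ and \eqref{eq:bound.momnet.two}. Your route buys independence from the fractional bound \eqref{eq:bound.momnet} (only the $L^2$ bounds \eqref{eq:bound.momnet.two} are needed) and in fact yields the sharper rate $C\tau^{\beta-1}$ without $\epsilon$ whenever $\beta<1$; the paper's one-step bootstrap is shorter because the heavy lifting was already done in Lemma \ref{lem:bound.momnet}. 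All individual steps you use (the splitting of exponents, the commutation $(-A_{n})^{\gamma}P_{n}=(-A)^{\gamma}P_{n}$ together with $AQ=QA$, the It\^o isometry for the piecewise-constant integrand, and the absorption of bounded terms into $C\tau^{\beta-\epsilon-1}$ using $\tau<\tau_{0}$) are justified under the paper's assumptions.
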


\begin{proof}
      Making use of \eqref{eq:EES}, H\"{o}lder's inequality and It\^{o}'s isometry gives
      \begin{equation*}
      \begin{split}
            \E\big[\|(-A_{n})^{\frac{1}{2}}Y_{m}^{n}\|^{2}\big]
      \leq&
            3\E\big[
               \|
               (-A_{n})^{\frac{1}{2}}E_{n}(\tau)Y_{m-1}^{n}
               \|^{2}
               \big]
            +
            3\tau^{2}\E\big[
                       \|
                       (-A_{n})^{\frac{1}{2}}E_{n}(\tau)
                       P_{n}F(Y_{m-1}^{n})
                       \|^{2}
                       \big]
            \\&+
            3\E\big[
               \|
               (-A_{n})^{\frac{1}{2}}E_{n}(\tau)
               P_{n}\Delta{W_{m-1}^{Q}}
               \|^{2}
               \big]
      \\\leq&
            3\|
             (-A_{n})^{\frac{1-(\beta-\varepsilon)}{2}}
             E_{n}(\tau)
             \|_{\mathcal{L}(H_{n})}^{2}
            \E\big[
               \|
               (-A_{n})^{\frac{\beta-\varepsilon}{2}}
               Y_{m-1}^{n}
               \|^{2}
               \big]
            \\&+
            3\tau^{2}
            \| (-A_{n})^{\frac{1}{2}}E_{n}(\tau) \|_{\mathcal{L}(H_{n})}^{2}
            \E\big[ \|P_{n}F(Y_{m-1}^{n}) \|^{2} \big]
            \\&+
            3\tau
             \|
             (-A_{n})^{\frac{1-(\beta-1)}{2}}E_{n}(\tau)
             \|_{\mathcal{L}(H_{n})}^{2}
             \|
             (-A_{n})^{\frac{\beta-1}{2}}
             P_{n}Q^{\frac{1}{2}}
             \|_{\mathcal{L}_{2}(H,H_{n})}^{2}
      \\\leq&
            C{\tau}^{\beta-\epsilon-1}
            +
            C\tau
            +
            C\tau^{\beta-1}
           =
            C{\tau}^{\beta-\epsilon-1}(1+{\tau}^{-(\beta-\epsilon)+2}+\tau^{\epsilon}),
%      \\\leq&
%            C{\tau}^{\beta-\epsilon-1}
%            +
%            C\tau_{0}^{-(\beta-\epsilon)+2}
%            {\tau}^{\beta-\epsilon-1}
%            +
%            C\tau_{0}^{\varepsilon}
%            {\tau}^{\beta-\epsilon-1}
%      \\\leq&
%            C{\tau}^{\beta-\epsilon-1}.
      \end{split}
      \end{equation*}
      where we also applied \eqref{eq:semigroup.smooth.one}, \eqref{eq:bound.momnet}--\eqref{eq:bound.momnet.two} and \eqref{eq:A.and.Q.n} in the penultimate step. The fact that $\tau \in (0,\tau_{0})$, $\epsilon > 0$ and  $-(\beta-\epsilon)+2 > 0$ finally ends the proof.
\end{proof}

\subsection{Ergodicity for the space-time full discretization}
           \label{sec:ergodicity.full}

To prove the ergodicity of $\{Y_{m}^{n}\}_{m \in \N}$, we introduce the theory of geometric ergodicity of Markov chain, which was first established by Mattingly, Stuart and Higham in \cite{mattingly2002ergodicity} to prove ergodicity of several discretizations based on backward Euler method for SDEs. Then it was applied in \cite{chen2017approximation} to test ergodicity of a modified implicit Euler method for an ergodic one-dimensional damped stochastic nonlinear Schr\"{o}dinger equation.

\begin{asn}[Lyapunov condition]\label{ass:Lyapunov.condition}
      There is a function $V \colon \R^{d} \to [1,\infty)$ with $\lim\limits_{|x| \to \infty} V(x) = \infty$ and real numbers $\alpha_{1} \in (0,1)$, $\alpha_{2} \in [0,\infty)$ such that
      $$
                \E\big[ V(x_{k+1}) | \F_{k} \big]
          \leq
                \alpha_{1} V(x_{k}) + \alpha_{2},
      $$
      where $\F_{k}$ denotes the $\sigma$-algebra of events up to and including the $k$-th iteration.
\end{asn}

\begin{den}
      We say that $V$ is essentially quadratic if there exist
      $C_{i}>0, i = 1, 2, 3$, such that
      $$
                 C_{1} (1 + |x|^{2})
          \leq
                 V(x)
          \leq
                 C_{2} (1 + |x|^{2}),
          \quad
                 |\nabla{V(x)}| \leq C_{3}(1 + |x|),
          \quad\, \forall\, x \in \R^d.
      $$
\end{den}

%\begin{den}
%      Let $\pi$ be an invariant measure for $\{ x_{k} \}_{k \in \N}$. We say $\{ x_{k} \}_{k \in \N}$ is ergodic if
%      $$
%            \lim\limits_{K\to\infty}
%            \frac{1}{K}
%            \sum\limits_{k=0}^{K-1}
%            \E\big[\Phi(x_{k})\big]
%            =
%            \int_{\R^{d}} \Phi(y) \,\pi(\dif{y}).
%      $$
%\end{den}

\begin{asn}[Minorization condition]\label{ass:minorization.condition}
  The Markov chain $\{ x_{k} \}_{k \in \N}$ on a state space
$( \R^{d}, \mathcal{B}(\R^{d}) )$ with transition kernel
$
      P_{k}(x,B)
  :=
      \P(x_{k} \in B | x_{0} = x) ,
      k \in \N, x \in \R^{d},
      B \in \mathcal{B}(\R^{d})
$
satisfies, for some fixed compact set $S \in \mathcal{B}(\R^{d})$,
\begin{enumerate}[(i)]
      \item for some $y^{*} \in \text{int}(S)$ there is, for any
            $\delta > 0$, a $\bar{k} = \bar{k}(\delta) \in \N$
            such that
            $$
                 P_{\bar{k}}(y, B_{\delta}(y^{*}))
                 >
                 0,
                 ~~~\forall\, y \in S,
            $$
            where $B_{\delta}(y^{*})$ denotes the open ball of radius $\delta$ centered at $y^{*}$;
      \item for $k \in \N$ the transition kernel $ P_{k}(x,B)$ possesses a density
            $p_{k}(x,y)$ such that
            $$
                  P_{k}(x,B)
              =
                  \int_{B} p_{k}(x,y) \diff{y},
              ~~~\forall\,
                  x \in S,
                  B \in \mathcal{B}(\R^{d}) \cap \mathcal{B}(S)
            $$
            and $p_{k}(x,y)$ is jointly continuous in
            $(x,y) \in S \times S$.
\end{enumerate}
\end{asn}

The following theorem comes from Theorem 2.5 in \cite{mattingly2002ergodicity}.

\begin{thm}
      \label{th:full.ergodic}
      If Markov chain $\{ x_{k} \}_{k \in \N}$ satisfies Assumptions \ref{ass:Lyapunov.condition} and \ref{ass:minorization.condition}  with an essentially quadratic Lyapunov function $V$, then $\{ x_{k} \}_{k \in \N}$ is ergodic with a unique invariant measure.
\end{thm}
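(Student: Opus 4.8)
The plan is to follow the classical drift-plus-minorization (Harris-type) route that underlies \cite[Theorem 2.5]{mattingly2002ergodicity}. First I would use the Lyapunov condition (Assumption \ref{ass:Lyapunov.condition}) to produce at least one invariant measure. Iterating the drift inequality and taking expectations gives
\[
\E\big[V(x_{k})\big] \leq \alpha_{1}^{k} V(x_{0}) + \tfrac{\alpha_{2}}{1-\alpha_{1}}, \qquad \forall\, k \in \N ,
\]
so $\sup_{k} \E[V(x_{k})] < \infty$. Since $V$ is essentially quadratic we have $V(x) \to \infty$ as $|x| \to \infty$, hence the sublevel sets $\{V \leq R\}$ are compact; the uniform moment bound then makes the Cesàro averages $\frac{1}{K}\sum_{k=1}^{K} P_{k}(x_{0},\cdot)$ tight, and a Krylov--Bogolyubov argument delivers an invariant probability measure $\mu$.

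Next I would upgrade Assumption \ref{ass:minorization.condition} into a genuine small-set estimate: there exist $\bar{k} \in \N$, $\eta \in (0,1)$ and a probability measure $\zeta$ on $\R^{d}$ concentrated near $y^{*}$ with
\[
P_{\bar{k}}(x,\cdot) \geq \eta\, \zeta(\cdot), \qquad \forall\, x \in S .
\]
This is the step where the two parts of the assumption are combined. Part (ii) gives a jointly continuous density $p_{k}(x,y)$ on $S \times S$, so for a suitable $k_{0}$ the density is bounded below by some $c > 0$ on a product neighbourhood $B_{\delta}(y^{*}) \times B_{\delta}(y^{*})$; part (i) first steers, in $\bar{k}(\delta)$ steps, a uniformly positive amount of mass from any $x \in S$ into $B_{\delta}(y^{*})$, and then Chapman--Kolmogorov together with the density lower bound spreads that mass with a common dominating probability measure $\zeta$ supported in $B_{\delta}(y^{*})$. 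A compactness argument over $x \in S$ makes the constant $\eta$ uniform.

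Finally, with the drift condition and this small-set estimate in hand the conclusion follows from the standard coupling argument (equivalently Harris' theorem), exactly as in \cite[Section 2]{mattingly2002ergodicity}: construct a coupling of two copies of the chain from arbitrary starting points; the drift condition forces both copies to return to $S$ with geometrically controlled return times, and whenever both copies lie in $S$ the minorization lets them coalesce with probability at least $\eta$, so the coupling time has a geometric tail. This yields $\|P_{k}(x,\cdot) - P_{k}(y,\cdot)\|_{\mathrm{TV}} \to 0$ geometrically, hence $\|P_{k}(x,\cdot) - \mu\|_{\mathrm{TV}} \to 0$, which forces uniqueness of $\mu$; a unique invariant measure makes the induced shift ergodic in the measure-theoretic sense, so the mean ergodic theorem (already invoked above) gives convergence of the time averages in $L^{2}(\mu)$, i.e.\ $\{x_{k}\}_{k \in \N}$ is ergodic. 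The hardest part will be the middle step --- converting the somewhat indirect Assumption \ref{ass:minorization.condition} into the uniform lower bound $P_{\bar{k}}(x,\cdot) \geq \eta\,\zeta$, since one must first route mass into $S$ before the continuous-density bound applies and then invoke compactness to make $\eta$ independent of $x \in S$; once that is available the coupling step is routine and the essentially-quadratic structure is only needed to sharpen the convergence rate.
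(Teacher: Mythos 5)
You should note first that the paper offers no proof of Theorem \ref{th:full.ergodic} at all: it is imported verbatim, as the text says, from \cite[Theorem 2.5]{mattingly2002ergodicity}, so there is no in-paper argument to compare against. Your sketch reconstructs essentially the proof strategy of that cited reference --- a drift (Lyapunov) estimate giving moment bounds and return to a compact set, an upgrade of Assumption \ref{ass:minorization.condition} to a genuine small-set bound $P_{\bar k}(x,\cdot)\geq\eta\,\zeta(\cdot)$ on $S$, and then a coupling/Harris argument yielding geometric convergence in total variation, hence uniqueness and ergodicity --- so in spirit you are following the same route the authors delegate to the literature.

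Two details in your outline would need repair in a full write-up. First, the Krylov--Bogolyubov step requires a Feller property of the transition kernel, which is not among the hypotheses of Theorem \ref{th:full.ergodic}; it is cleaner (and is what the Harris-type argument actually does) to obtain existence directly from the geometric contraction in (weighted) total variation, which makes $\{P_k(x,\cdot)\}_k$ Cauchy and identifies the limit as the invariant measure, rather than to run a separate tightness argument. Second, your claim that joint continuity of $p_{k}(x,y)$ ``is bounded below by some $c>0$ on a product neighbourhood $B_{\delta}(y^{*})\times B_{\delta}(y^{*})$'' does not follow from continuity alone: continuity gives no positivity. One must first produce a point of strict positivity, e.g.\ apply part (i) with $x=y^{*}$ to get $P_{\bar k}(y^{*},B_{\delta}(y^{*}))>0$, hence some $y^{**}$ with $p_{\bar k}(y^{*},y^{**})>0$, and only then use joint continuity to get $p_{\bar k}\geq c>0$ on a neighbourhood $B_{\rho}(y^{*})\times B_{\rho}(y^{**})$; the minorization measure $\zeta$ is then (normalized) Lebesgue measure near $y^{**}$, not near $y^{*}$, and the uniformity in $x\in S$ of the preliminary ``steering'' step $P_{\bar k(\rho)}(x,B_{\rho}(y^{*}))\geq a>0$ again uses continuity of the density together with compactness of $S$. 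With these corrections your argument is the standard and correct one underlying \cite[Theorem 2.5]{mattingly2002ergodicity}.
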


Armed with the above theorem, we can prove the following result.
%We will adopt above theory to show

\begin{thm}[\textbf{Ergodicity of $\{ Y_{m}^{n} \}_{m \in \N}$}]\label{th:ergodic.three}
      %[Ergodicity of full-discretization]
      Suppose that Assumptions \ref{ass:main.ass} and \ref{ass:main.ass2} hold and let $\tau < \tau_0 \leq \frac{\lambda_1-L_F}{4L^2}$. Then $\{ Y_{m}^{n} \}_{m \in \N}$ given by \eqref{eq:EES} is ergodic with a unique invariant measure $\nu_{\tau}^{n}$.
%      such that for all $\Phi \in C_{b}^{2}(H,\R)$
%      \begin{equation}
%            \label{eq:ergodic.three}
%            \lim\limits_{M\to\infty}
%            \frac{1}{M}
%            \sum\limits_{m=0}^{M-1}
%            \E\big[\Phi(Y_{m}^{n})\big]
%            =
%            \int_{H_{n}} \Phi(y) \,\nu_{\tau}^{n}(\dif{y}).
%      \end{equation}
\end{thm}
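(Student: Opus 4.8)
The plan is to recast the scheme \eqref{eq:EES} as a time-homogeneous Markov chain on $\R^n$ and then to apply the geometric ergodicity criterion of Theorem \ref{th:full.ergodic}. Writing $Y_m^n=\sum_{i=1}^n y_i(m)e_i$, putting $y(m)=(y_1(m),\dots,y_n(m))'\in\R^n$ and using $E_n(\tau)e_i=e^{-\lambda_i\tau}e_i$, the recursion \eqref{eq:EES} becomes
\begin{equation*}
      y(m)=D\big(y(m-1)+\tau\, g(y(m-1))\big)+D\bar{Q}\,\Delta\beta_{m-1},
\end{equation*}
where $D=\mathrm{diag}(e^{-\lambda_1\tau},\dots,e^{-\lambda_n\tau})$, $\bar{Q}=\mathrm{diag}(\sqrt{q_1},\dots,\sqrt{q_n})$ with all $q_i>0$, the $i$-th entry of $g(y(m-1))$ is $\langle P_nF(Y_{m-1}^n),e_i\rangle$, and $\Delta\beta_{m-1}=(\beta_i(t_m)-\beta_i(t_{m-1}))_{i=1}^n$ is a centred Gaussian vector with covariance $\tau I$ that is independent of $\F_{t_{m-1}}$. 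Since $\{Y_m^n\}_{m\in\N}$ and $\{y(m)\}_{m\in\N}$ are in isometric one-to-one correspondence via $H_n\cong\R^n$, it suffices to prove $\{y(m)\}_{m\in\N}$ is ergodic with a unique invariant measure; $\nu_\tau^n$ is then its push-forward under $x\mapsto\sum_{i=1}^n x_ie_i$.

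First I would verify the Lyapunov condition (Assumption \ref{ass:Lyapunov.condition}) with the essentially quadratic function $V(x)=1+|x|^2$. Conditioning on $\F_{t_{m-1}}$ and using independence and the mean-zero property of the noise together with $\|D\|\le e^{-\lambda_1\tau}$ gives
\begin{equation*}
      \E\big[\,|y(m)|^2\,\big|\,\F_{t_{m-1}}\big]
      \le e^{-2\lambda_1\tau}\big(|y(m-1)|^2+2\tau\,(y(m-1),g(y(m-1)))+\tau^2|g(y(m-1))|^2\big)+\tau\smallsum_{i=1}^n q_i.
\end{equation*}
The inner product is estimated exactly as in the proof of Theorem \ref{th:ergodic.two}, via the projection property of $P_n$ and \eqref{eq:disipativity}, by $\tfrac{\lambda_1+L_F}{2}|y(m-1)|^2+\tfrac{\|F(0)\|^2}{2(\lambda_1-L_F)}$; the term $\tau^2|g(y(m-1))|^2=\tau^2\|P_nF(Y_{m-1}^n)\|^2$ is controlled by $2\tau^2L^2|y(m-1)|^2+2\tau^2\|F(0)\|^2$ using \eqref{eq:F.derivative.one}; and the restriction $\tau<\tau_0\le\frac{\lambda_1-L_F}{4L^2}$ lets the $\tau^2L^2$ contribution be absorbed. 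Together with $1+x\le e^x$ this produces a prefactor of the form $e^{-2\lambda_1\tau}\big(1+O(\tau)\big)\le e^{-c\tau}=:\alpha_1\in(0,1)$ for some $c>0$ (the mechanism is precisely the per-step contraction already isolated in the proof of Lemma \ref{lem:bound.momnet}; here $L_F<\lambda_1$ is essential), whence $\E[V(y(m))\,|\,\F_{t_{m-1}}]\le\alpha_1 V(y(m-1))+\alpha_2$ for a suitable $\alpha_2=\alpha_2(n)\ge0$.

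Next I would verify the minorization condition (Assumption \ref{ass:minorization.condition}). One step of the chain maps $x$ to $Dx+\tau D g(x)$ perturbed by a nondegenerate Gaussian with covariance $\Sigma:=\tau\,\mathrm{diag}(e^{-2\lambda_i\tau}q_i)$, which is positive definite because every $q_i>0$; hence the one-step transition kernel has the density
\begin{equation*}
      p_1(x,z)=(2\pi)^{-n/2}(\det\Sigma)^{-1/2}\exp\!\Big(-\tfrac12\big(z-Dx-\tau Dg(x)\big)'\Sigma^{-1}\big(z-Dx-\tau Dg(x)\big)\Big),
\end{equation*}
which is strictly positive on $\R^n\times\R^n$ and jointly continuous because $g$ is continuous ($F$ being twice differentiable). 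Consequently $p_k(x,z)=\int_{\R^n}p_{k-1}(x,w)p_1(w,z)\,\dif{w}$ is jointly continuous for every $k$, so part (ii) holds for any compact $S$; and part (i) holds with $\bar{k}=1$ and any $y^*\in\mathrm{int}(S)$, since $P_1(y,B_\delta(y^*))=\int_{B_\delta(y^*)}p_1(y,z)\,\dif{z}>0$ for all $y\in S$. Choosing $S=\{x:V(x)\le R\}$ with $R$ large (a nonempty closed ball, compact with nonempty interior) fulfils Assumption \ref{ass:minorization.condition}. Theorem \ref{th:full.ergodic} then yields ergodicity of $\{y(m)\}_{m\in\N}$ with a unique invariant measure, which transfers to $\{Y_m^n\}_{m\in\N}$ as above, defining $\nu_\tau^n$.

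The main obstacle is the conditional one-step estimate underlying the Lyapunov condition: one must play the built-in exponential decay $\|E_n(\tau)\|_{\LL(H_n)}\le e^{-\lambda_1\tau}$ of the exponential integrator against the dissipativity $L_F<\lambda_1$ and the stepsize constraint $\tau<\tau_0$ so as to dominate the $O(\tau^2)$ contribution of $\|P_nF\|^2$ and obtain a contraction factor strictly below $1$. This is exactly the estimate already carried out in the proof of Lemma \ref{lem:bound.momnet}, now needed in conditional form; the remaining ingredients — the coordinate reformulation and the Gaussian-density argument for the minorization condition — are routine.
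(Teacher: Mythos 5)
Your proposal is correct and follows essentially the same route as the paper: both verify the Lyapunov condition with the essentially quadratic function $V(x)=1+\|x\|^{2}$ (exploiting $\|E_{n}(\tau)\|_{\mathcal{L}(H_{n})}\le e^{-\lambda_{1}\tau}$, the one-sided Lipschitz bound with $L_{F}<\lambda_{1}$ and the stepsize restriction $\tau<\tau_{0}$) and the minorization condition via the nondegenerate Gaussian structure of the increments, and then invoke Theorem \ref{th:full.ergodic}. The only differences are cosmetic: you work in $\R^{n}$ coordinates and write the one-step Gaussian transition density explicitly (giving positivity and joint continuity at once), whereas the paper argues part (i) by solving for the Wiener increments and uses a slightly different Young-inequality bookkeeping to obtain $\alpha_{1}\in(0,1)$ — both yield the same conclusion.
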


\begin{proof}
      In view of \eqref{eq:W.Q.t}, we can rewrite \eqref{eq:EES} as
      \begin{equation}\label{eq:414}
            Y_{m}^{n}
            =
            E_{n}(\tau)Y_{m-1}^{n}
            +
            {\tau}E_{n}(\tau)P_{n}F(Y_{m-1}^{n})
            +
            \sum\limits_{i=1}^{n}
            \sqrt{q_{i}}e^{-\lambda_{i}\tau}
            \Delta\beta_{i}^{m-1}e_{i}
      \end{equation}
      with the Wiener increments
      $
          \Delta\beta_{i}^{m-1}
          :=
          \beta_{i}(t_{m}) - \beta_{i}(t_{m-1}), i = 1,2,\ldots,n, m \in \N.
      $
      Owing to the independence of $\{\Delta\beta_{i}^{m}\}_{i=1}^{n}, m \in \N$, it follows from \cite[Page xix]{borovkov1998ergodicity} that the random variables make a Markov chain.
      According to Theorem \ref{th:full.ergodic}, it suffices to show $\{ Y_{m}^{n} \}_{m \in \N}$ satisfies the Lyapunov condition and the minorization condition.
%      Since $\{ Y_{m}^{n} \}_{m \in \N}$
%      is a Markov chain, it suffices to divide the proof into the
%      following two steps according to the Theorem \ref{th:full.ergodic}.

      Let us first show the Lyapunov condition.
      Choosing $V(x) = \| x \|^{2} + 1, x \in H_{n}$, it is easy to verify that $V$ is essentially quadratic. From \eqref{eq:EES} and the properties of conditional
      expectation, we have
      \begin{equation}\label{eq:estimation}
      \begin{split}
            \E\big[ V(Y_{m+1}^{n}) | \F_{m} \big]
      =&
%            \E\big[ \| Y_{m+1}^{n} \|^{2} + 1 | \F_{m} \big]
%      =
%            \E\big[ \| Y_{m+1}^{n} \|^{2} | \F_{m} \big] + 1
%      \\=&
%            \E\big[
%              \|
%                 E_{n}(\tau)Y_{m}^{n}
%                 +
%                 {\tau}E_{n}(\tau)P_{n}F(Y_{m}^{n})
%                 +
%                 E_{n}(\tau)P_{n}\Delta{W_{m}^{Q}}
%              \|^{2}
%              | \F_{m}
%              \big]
%            + 1
%      \\=&
%            \E\big[\|E_{n}(\tau)Y_{m}^{n}\|^{2} | \F_{m} \big]
%            +
%            \E\big[
%              \| {\tau}E_{n}(\tau)P_{n}F(Y_{m}^{n})\|^{2}|\F_{m}
%              \big]
%      \\&+
%            \E\big[\|E_{n}(\tau)P_{n}\Delta{W_{m}^{Q}}\|^{2}|\F_{m}\big]
%            +
%            2\E\big[
%             \langle
%                E_{n}(\tau)Y_{m}^{n}
%                ,
%                {\tau}E_{n}(\tau)P_{n}F(Y_{m}^{n})
%             \rangle
%             | \F_{m}
%             \big]
%      \\&+%% t{\tau}is term vanis{\tau}es
%            2\E\big[
%             \langle
%                E_{n}(\tau)Y_{m}^{n}
%                ,
%                E_{n}(\tau)P_{n}\Delta{W_{m}^{Q}}
%             \rangle
%             | \F_{m}
%             \big]
%      \\&+%% t{\tau}is term vanis{\tau}es
%            2\E\big[
%             \langle
%                {\tau}E_{n}(\tau)P_{n}F(Y_{m}^{n})
%                ,
%                E_{n}(\tau)P_{n}\Delta{W_{m}^{Q}}
%             \rangle
%             | \F_{m}
%             \big]
%            +  1
%\\=&%measurability \& independence \& property of conditional expectation
            \big\| E_{n}(\tau)\big( Y_{m}^{n} + {\tau}P_{n}F(Y_{m}^{n}) \big) \big\|^{2}
            +
            \E\big[\| E_{n}(\tau)P_{n}\Delta{W_{m}^{Q}}\|^{2}\big]  +  1.
%      \\\leq&%Schwartz inequality
%            \|E_{n}(\tau)Y_{m}^{n}\|^{2}
%            +
%            \|{\tau}E_{n}(\tau)P_{n}F(Y_{m}^{n})\|^{2}
%            +
%            \E\big[\|E_{n}(\tau)P_{n}\Delta{W_{m}^{Q}}\|^{2}\big]
%            \\&+
%            2\|E_{n}(\tau)Y_{m}^{n}\|
%             \|\tauE_{n}(\tau)P_{n}F(Y_{m}^{n})\|
%            +  1
%      \\\leq&%fact $\|E_{n}(\tau)\|_{L({H}_{n})}
%             %\leq e^{-\lambda_{1}{\tau}}$
%            e^{-2\lambda_{1}{\tau}}
%            \big( \|Y_{m}^{n}\|^{2}
%            +
%            {\tau}^{2} \|F(Y_{m}^{n})\|^{2}
%            +
%            2{\tau}\langle Y_{m}^{n},F(Y_{m}^{n}) \rangle
%            \big)
%            +
%            \E\big[\| E_{n}(\tau)P_{n}\Delta{W_{m}^{Q}}\|^{2}\big]
%            +  1.
      \end{split}
      \end{equation}
      Observing
      $
            \|E_{n}(\tau)\|_{\mathcal{L}({H}_{n})}
            \leq
            e^{-\lambda_{1}{\tau}}
      $, \eqref{eq:disipativity}--\eqref{eq:F.derivative.one} and applying the weighted Young inequality $ab \leq \varepsilon{a^{2}} + \frac{b^{2}}{4\varepsilon}$
      for all $a,b \in \R$ with $\varepsilon = 3L^2\tau > 0$ enable us to show that
      \begin{equation}\label{eq:bigEntaubigYmn}%\eqref{eq:bigEntaubigYmn}
      \begin{split}
            &\big\| E_{n}(\tau)\big( Y_{m}^{n} + {\tau}P_{n}F(Y_{m}^{n}) \big) \big\|^{2}
        \leq
             e^{-2\lambda_{1}{\tau}}
             \big( \|Y_{m}^{n}\|^{2}
             +
             {\tau}^{2} \|F(Y_{m}^{n})\|^{2}
             +
             2{\tau}\langle Y_{m}^{n},F(Y_{m}^{n}) \rangle
             \big)
      \\\leq&
             e^{-2\lambda_{1}{\tau}}
             \big( \|Y_{m}^{n}\|^{2}
             +
             2{\tau}^{2} \|F(Y_{m}^{n})-F(0)\|^{2}
             +
             2{\tau}^{2} \|F(0)\|^{2}
             +
             2{\tau}\langle Y_{m}^{n},F(Y_{m}^{n})-F(0) \rangle
             +
             2{\tau}\langle Y_{m}^{n},F(0) \rangle
             \big)
      \\\leq&
             e^{-2\lambda_{1}{\tau}}
             \big( \|Y_{m}^{n}\|^{2}
             +
             8L^{2}{\tau}^{2} \|Y_{m}^{n}\|^{2}
             +
             2{\tau}^{2} \|F(0)\|^{2}
             +
             2{\tau}L_F\|Y_{m}^{n}\|^{2}
             +
             \|F(0)\|^{2}/(6L^2)
             \big)
%      \\\leq&
%             e^{-2\lambda_{1}{\tau}}
%             \big( \|Y_{m}^{n}\|^{2}
%             +
%             2{\tau}\lambda_1\|Y_{m}^{n}\|^{2}
%             +
%             2{\tau}^{2} \|F(0)\|^{2}
%             +
%             \frac{\|F(0)\|^{2}}{6L^2}
%             \big)
      \\\leq&
             e^{-2\lambda_{1}{\tau}}
             \big( 1 + 2\lambda_1{\tau} \big)\|Y_{m}^{n}\|^{2}
             +
             e^{-2\lambda_{1}{\tau}}\|F(0)\|^{2}
             (1+12{\tau}^{2}L^2)/(6L^2),
      \end{split}
      \end{equation}
      where we used $\tau < \tau_0 \leq \frac{\lambda_1-L_F}{4L^2}$ in the last step. Employing It\^o's isometry, \eqref{eq:semigroup.smooth.one}
      and \eqref{eq:A.and.Q.n} implies
      \begin{equation}
            \label{eq:estimation.pre}
      \begin{split}
            &
            \E\big[
              \|
                  E_{n}({\tau})
                  P_{n}\Delta{W_{m}^{Q}}
              \|^{2}
              \big]
%            =
%            \E\Big[
%              \Big\|
%                  \int_{t_{m-1}}^{t_{m}}
%                      E_{n}({\tau})P_{n}
%                  \diff{W^{Q}(s)}
%              \Big\|^{2}
%              \Big]
%         \\&=
%            \E\Big[
%              \int_{t_{m-1}}^{t_{m}}
%                  \big\|
%                      E_{n}({\tau})P_{n}
%                      Q^{\frac{1}{2}}
%                  \big\|^{2}
%              \diff{s}
%              \Big]
            =
            \tau
            \big\|
                E_{n}({\tau})P_{n}
                Q^{\frac{1}{2}}
            \big\|_{\mathcal{L}_{2}(H,H_{n})}^{2}
%         \\=&
%            \tau
%            \big\|
%                E_{n}({\tau})
%                (-A_{n})^{\frac{1-\beta}{2}}
%                \cdot
%                (-A_{n})^{\frac{\beta-1}{2}}
%                P_{n}Q^{\frac{1}{2}}
%            \big\|^{2}
%         \\&=
%            \tau
%            \big\|
%                (-A_{n})^{\frac{1-\beta}{2}}
%                E_{n}({\tau})
%                \cdot
%                (-A_{n})^{\frac{\beta-1}{2}}
%                P_{n}Q^{\frac{1}{2}}
%            \big\|_{\mathcal{L}_{2}(H,H_{n})}^{2}
         \\\leq&
            \tau
            \big\|
                (-A_{n})^{\frac{1-\beta}{2}}
                E_{n}({\tau})
            \big\|_{\mathcal{L}(H_{n})}^{2}
            \big\|
                (-A_{n})^{\frac{\beta-1}{2}}
                P_{n}Q^{\frac{1}{2}}
            \big\|_{\mathcal{L}_{2}(H,H_{n})}^{2}
%         \\\leq&
%            C\tau^{\beta}e^{-\lambda_{1}{\tau}}
%             \big\|
%                 (-A_{n})^{\frac{\beta-1}{2}}
%                 P_{n}Q^{\frac{1}{2}}
%             \big\|_{\mathcal{L}_{2}(H,H_{n})}^{2}
         \leq
            C\tau^{\beta}e^{-\lambda_{1}{\tau}}.
      \end{split}
      \end{equation}
      Inserting \eqref{eq:bigEntaubigYmn} and \eqref{eq:estimation.pre} into \eqref{eq:estimation}, one can derive
%%%^^^^^^^^^^^^^^^^^^^^^^^^^^^^^^^^^^^^^^^^^^^^^^^^^^^^^^
%      \begin{equation*}
%      \begin{split}
%            \|F(Y_{m}^{n})\|^{2}
%      =&
%            \| F(Y_{m}^{n}) - F(0) + F(0) \|^{2}
%      \\\leq&
%            (1+\varepsilon)\|F(Y_{m}^{n})-F(0)\|^{2}
%            +
%            (1+\frac{1}{\varepsilon})\|F(0)\|^{2}
%      \\\leq&
%            (1+\varepsilon)L_{F}^{2}\|Y_{m}^{n}\|^{2}
%            +
%            (1+\frac{1}{\varepsilon})\|F(0)\|^{2}
%      \\<&
%            (1+\varepsilon)\lambda_{1}^{2}\|Y_{m}^{n}\|^{2}
%            +
%            (1+\frac{1}{\varepsilon})\|F(0)\|^{2}
%      \\=&
%            \frac{5}{4}\lambda_{1}^{2}\|Y_{m}^{n}\|^{2}
%            +
%            5\|F(0)\|^{2}
%            (~~\text{Taking:~~} \varepsilon = \frac{1}{4}).
%      \end{split}
%      \end{equation*}
%      \begin{equation*}
%      \begin{split}
%            \|F(Y_{m}^{n})\|
%%     =
%%           \|F(Y_{m}^{n}) - F(0) + F(0)\|
%      \leq
%            \| F(Y_{m}^{n}) - F(0) \| + \| F(0) \|
%      \leq
%            L_{F} \| Y_{m}^{n} \| + \| F(0) \|
%      <
%            \lambda_{1} \| Y_{m}^{n} \| + \| F(0) \|.
%      \end{split}
%      \end{equation*}
%%%VVVVVVVVVVVVVVVVVVVVVVVVVVVVVVVVVVVVVVVVVVVVVVVVVVV
      \begin{equation*}
      \begin{split}
            \E\big[ V(Y_{m+1}^{n}) | \F_{m} \big]
      \leq&
            \alpha_{1}\|Y_{m}^{n}\|^{2}
            +
            \alpha_{2}
%%^^^^^^^^^^^^^^^^^^^^^^^^^^^^^^^^^^^^^^^^^^^^^^^^^^^^^^^^^^^^^^^^
%                  \\\because
%                  2{\tau}e^{-2\lambda_{1}{\tau}} \|Y_{m}^{n}\| \|F(0)\|
%             &=
%                  \lambda_{1}{\tau}\|Y_{m}^{n}\|
%                  \cdot
%                  \frac{2\|F(0)\|}{\lambda_{1}}
%                  \times
%                  e^{-2\lambda_{1}{\tau}}
%             \\&\leq
%                  \Big(
%                         \varepsilon\lambda_{1}^{2}{\tau}^{2}
%                         \|Y_{m}^{n}\|^{2}
%                         +
%                         \frac{1}{4\varepsilon}
%                         \Big(\frac{2\|F(0)\|}{\lambda_{1}}\Big)^{2}
%                  \Big)
%                  \times e^{-2\lambda_{1}{\tau}}
%             \\&=
%                  \varepsilon\lambda_{1}^{2}{\tau}^{2}
%                  e^{-2\lambda_{1}{\tau}}\|Y_{m}^{n}\|^{2}
%                  +
%                  \frac{\|F(0)\|^{2}}{\varepsilon\lambda_{1}^{2}}
%                  e^{-2\lambda_{1}{\tau}}
%             \\&=
%                  \frac{1}{4}\lambda_{1}^{2}{\tau}^{2}
%                  e^{-2\lambda_{1}{\tau}}\|Y_{m}^{n}\|^{2}
%                  +
%                  4e^{-2\lambda_{1}{\tau}}
%                  \frac{\|F(0)\|^{2}}{\lambda_{1}^{2}}
%                  (~~\text{Taking:~~} \varepsilon = \frac{1}{4}).
%%%VVVVVVVVVVVVVVVVVVVVVVVVVVVVVVVVVVVVVVVVVVVVVVVVVVVVVVVVVVVVV
      \end{split}
      \end{equation*}
      with $ \alpha_{1}
          :=
                \big( 1 + 2\lambda_{1}{\tau} \big)
                e^{-2\lambda_{1}{\tau}}
          \in   (0,1)$ and
      \begin{align*}
%                \alpha_{1}
%          :=&
%                \big( 1 + 2\lambda_{1}{\tau} \big)
%                e^{-2\lambda_{1}{\tau}}
%          \in   (0,1),
%          \\
                \alpha_{2}
          :=&
                e^{-2\lambda_{1}{\tau}}\|F(0)\|^{2}(1+12{\tau}^{2}L^2)/(6L^2)
                +
                C\tau^{\beta}e^{-\lambda_{1}{\tau}} +  1
          \in   [0,\infty),
      \end{align*}
      %by the Taylor expansion for all $\tau \in (0,\tau_{0})$.
      which says that Assumption \ref{ass:Lyapunov.condition} is fulfilled with an essentially quadratic Lyapunov function $V$.

      %\textcolor{red}{
      Now we are ready to prove the minorization condition. By the Heine--Borel theorem in the finite-dimensional space $H_n$, we know that $S_{n}:=\{s \in H_{n}: \|s\| \leq 1\} \in \mathcal{B}(H_n)$ is a compact set.
      For any $s \in S_{n}$ and $z  \in Z_{n}$ with $Z_{n} \in \mathcal{B}(H_n)$, we use \eqref{eq:414} together with $\{e_{i}\}_{i=1}^{n}$ being an orthonormal basis of $H_{n}$ to get
%      \begin{equation*}
%            Y_{m}^{n}
%            =
%            E_{n}(\tau)Y_{m-1}^{n}
%            +
%            {\tau}E_{n}(\tau)P_{n}F(Y_{m-1}^{n})
%            +
%            \sum\limits_{i=1}^{n}
%            \sqrt{q_{i}}e^{-\lambda_{i}\tau}
%            \Delta\beta_{i}^{m-1}e_{i},
%      \end{equation*}
%      \begin{equation*}
%            b
%            =
%            E_{n}(\tau)a
%            +
%            {\tau}E_{n}(\tau)P_{n}F(a)
%            +
%            \sum\limits_{i=1}^{n}
%            \sqrt{q_{i}}e^{-\lambda_{i}\tau}
%            \Delta\beta_{i}^{m-1}e_{i},
%      \end{equation*}
%      \begin{equation*}
%            \langle b,e_{i} \rangle
%            =
%            e^{-\lambda_{i}\tau}\langle a,e_{i} \rangle
%            +
%            {\tau}e^{-\lambda_{i}\tau}\langle P_{n}F(a),e_{i} \rangle
%            +
%            \sqrt{q_{i}}e^{-\lambda_{i}\tau}
%            \Delta\beta_{i}^{m-1}
%      \end{equation*}
      \begin{equation*}
            \Delta\beta_{i}^{m-1}
            =
            \big( e^{\lambda_{i}\tau}\langle s,e_{i} \rangle
            -
            \langle z,e_{i} \rangle
            -
            {\tau}\langle P_{n}F(z),e_{i} \rangle \big) / \sqrt{q_{i}},
            \quad\forall\, i = 1,2, \ldots, n,
      \end{equation*}
      which shows that $\{\Delta\beta_{i}^{m-1}\}_{i=1}^{n}$ can be properly chosen to guarantee that $Y_{m}^n = s$ starting from $Y_{m-1}^n = z$. Then the first condition in Assumption \ref{ass:minorization.condition} is fulfilled thanks to the property that Brownian motions hit any cylindrical set with positive probability.%}
%      For any $a,b \in H_{n}$, as $\{e_{i}\}_{i=1}^{n}$ is an orthonormal basis of $H_{n}$, $\{\Delta\beta_{i}^{m-1}\}_{i=1}^{n}$ can be uniquely determined to
%      ensure that $Y_{m-1}^{n} = a$ and $Y_{m}^{n} = b$. This
%      leads to the irreducibility of $\{ Y_{m}^{n} \}_{m \in \N}$.
%\textcolor{red}{%it is obvious that the
%      $\{ \F_{t_{m}} \}_{m \in \N}$-measurable process
%      $\{ Y_{m}^{n} \}_{m \in \N}$ is uniquely defined by a continuous
%      function
%      $$
%            Y_{m}^{n}
%            =
%            \varphi
%            \Big(
%                    Y_{m-1}^{n},
%                    \tfrac{E_{n}(\tau)P_{n}\Delta{W_{m-1}^{Q}}}
%                         {\sqrt{\tau}}
%             \Big).
%      $$
      It remains to show the second condition in Assumption \ref{ass:minorization.condition}. Since each Gaussian random variable $\Delta\beta_{i}^{m-1}$ admits  $C^{\infty}$ density function, and so does $E_{n}(\tau)P_{n}\Delta{W_{m-1}^{Q}}$ in  \eqref{eq:414}, then the transition kernel $P_{1}(x,B_{n})$ with $ x \in S_{n} \in \mathcal{B}(H_{n}), B_{n} \in \mathcal{B}(H_{n}) \cap \mathcal{B}(S_{n})$ possesses a density $p_{1}(x,y)$, which is jointly continuous in $(x,y) \in S_{n} \times S_{n}$. Finally, the time-homogeneous property of Markov chain $\{ Y_{m}^{n} \}_{m \in \N}$ promises the joint continuity of densities $p_{m}(x,y), m \in \N$. Thus we complete the proof by Theorem \ref{th:full.ergodic}. %}
\end{proof}

\subsection{Weak temporal approximation error over long time}\label{sec:weak.error.full}

Armed with our assumptions, one can easily check that all conditions of the weak error representation formula introduced in \cite[Theorem~2.2]{wang2016weak} are fulfilled. Therefore, we can apply this formula to carry out an easy weak error analysis via some elementary arguments. To adapt our analysis, the formula is listed below with some non-essential changes.

\begin{thm}[\textbf{Weak error representation formula}]
      Suppose that Assumptions \ref{ass:main.ass} and \ref{ass:main.ass2} hold. Then for any $T = M\tau$ and $\Phi \in C_{b}^{2}(H;\R)$ the weak error of the exponential Euler scheme \eqref{eq:EES} for the problem \eqref{eq:SDE} has the following representation
      \begin{equation}\label{eq:weak.error.representation}
      \begin{split}
      &
            \E\big[\Phi(X^{n}(T))\big]
            -
            \E\big[\Phi(Y_{M}^{n})\big]
      \\=&
            \sum\limits_{m=0}^{M-1}
                 \int_{t_{m}}^{t_{m+1}}
                     \E\big[
                                \big\langle
                                Dv^{n}(T-t,\tilde{Y}^{n}(t))
                                ,
                                P_{n}F(\tilde{Y}^{n}(t))
                                -
                                E_{n}(t-t_{m})P_{n}F(Y_{m}^{n})
                                \big\rangle%_{H}
                       \big]
                 \diff{t}
            \\&+
            \frac{1}{2}
            \int_{t_{m}}^{t_{m+1}}
                \E\big[
                  \tr
                  \big\{
                      D^{2}v^{n}(T-t,\tilde{Y}^{n}(t))
                      \big(
                          (P_{n}Q^{\frac{1}{2}})(P_{n}Q^{\frac{1}{2}})^{*}
          \\&~~~-
                          (E_{n}(t-t_{m})P_{n}Q^{\frac{1}{2}})
                          (E_{n}(t-t_{m})P_{n}Q^{\frac{1}{2}})^{*}
                      \big)
                  \big\}
                  \big]
            \diff{t}.
      \end{split}
      \end{equation}
      Here $X^{n}(T)$ and $Y_{M}^{n}$ are determined
      by \eqref{eq:SDE.integral} and \eqref{eq:EES}, respectively,
      and $\tilde{Y}^{n}(t), \forall\,t \in [t_{m},t_{m+1}]$ is a
      continuous extension of $Y_{m}^{n}$, defined by
      \begin{equation}
            \label{eq:continuous.extension}
      \begin{split}
            \tilde{Y}^{n}(t)
      =&
            E_{n}(t-t_{m})
            \big(
                  Y_{m}^{n}
                  +
                  P_{n}F(Y_{m}^{n}) (t-t_{m})
                  +
                  P_{n}(W^{Q}(t)-W^{Q}(t_{m}))
            \big),\quad\forall\, t \in [t_{m},t_{m+1}],
%      \\=&
%            E_{n}(t-t_{m})Y_{m}^{n}
%            +
%            \int_{t_{m}}^{t}
%                E_{n}(t-t_{m})
%                P_{n}F(Y_{m}^{n})
%            \diff{s}
%            +
%            \int_{t_{m}}^{t}
%                E_{n}(t-t_{m})P_{n}
%            \diff{W^{Q}(s)}
      \end{split}
      \end{equation}
      where
      $
             E_{n}(t-t_{m})P_{n}\big(W^{Q}(t)-W^{Q}(t_{m})\big)
      :=
             \int_{t_{m}}^{t}
                 E_{n}(t-t_{m})P_{n}
             \diff{W^{Q}(s)}
      $.
\end{thm}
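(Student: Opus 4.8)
The plan is to derive \eqref{eq:weak.error.representation} directly from Itô's formula and the Kolmogorov equation \eqref{eq:kol}, which is precisely the argument behind \cite[Theorem~2.2]{wang2016weak}; alternatively one simply invokes that result once one checks that Assumptions~\ref{ass:main.ass}--\ref{ass:main.ass2} imply all of its hypotheses (in particular, as recorded just after \eqref{eq:kol}, that $v^n\in C_b^{1,2}([0,\infty)\times H_n,\R)$ is the strict solution of \eqref{eq:kol}). The starting observation is that the continuous interpolant $\tilde Y^n$ from \eqref{eq:continuous.extension} reproduces the scheme at the grid points: obviously $\tilde Y^n(t_m)=Y_m^n$, and by the semigroup property
\[
\tilde Y^n(t_{m+1})=E_n(\tau)\big(Y_m^n+\tau P_nF(Y_m^n)+P_n\Delta W_m^Q\big)=Y_{m+1}^n .
\]
Moreover, writing $\tilde Y^n(t)=E_n(t-t_m)Z_m(t)$ on $[t_m,t_{m+1}]$ with $\mathrm{d}Z_m(t)=P_nF(Y_m^n)\,\mathrm{d}t+P_n\,\mathrm{d}W^Q(t)$ and using $\tfrac{\mathrm{d}}{\mathrm{d}t}E_n(t-t_m)=A_nE_n(t-t_m)$, the interpolant solves
\[
\mathrm{d}\tilde Y^n(t)=\big(A_n\tilde Y^n(t)+E_n(t-t_m)P_nF(Y_m^n)\big)\,\mathrm{d}t+E_n(t-t_m)P_n\,\mathrm{d}W^Q(t),\qquad t\in[t_m,t_{m+1}].
\]

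Next I would apply Itô's formula to $t\mapsto v^n(T-t,\tilde Y^n(t))$ on each subinterval $[t_m,t_{m+1}]$. The time-derivative contribution equals $-(\partial_s v^n)(T-t,\tilde Y^n(t))$; replacing $\partial_s v^n$ by the right-hand side of \eqref{eq:kol} and adding the first- and second-order Itô terms generated by the dynamics above, the two $\langle Dv^n(T-t,\tilde Y^n(t)),A_n\tilde Y^n(t)\rangle$ contributions cancel, leaving
\[
\mathrm{d}\,v^n(T-t,\tilde Y^n(t))=\big\langle Dv^n(T-t,\tilde Y^n(t)),\,E_n(t-t_m)P_nF(Y_m^n)-P_nF(\tilde Y^n(t))\big\rangle\,\mathrm{d}t+R_m(t)\,\mathrm{d}t+\mathrm{d}M_m(t),
\]
where $R_m(t)=\tfrac12\tr\big\{D^2v^n(T-t,\tilde Y^n(t))\big[(E_n(t-t_m)P_nQ^{1/2})(E_n(t-t_m)P_nQ^{1/2})^{*}-(P_nQ^{1/2})(P_nQ^{1/2})^{*}\big]\big\}$ is the trace defect and $\mathrm{d}M_m(t)=\big\langle Dv^n(T-t,\tilde Y^n(t)),E_n(t-t_m)P_n\,\mathrm{d}W^Q(t)\big\rangle$.

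Finally I would integrate over $[t_m,t_{m+1}]$, take expectations, and sum over $m=0,\dots,M-1$. The stochastic term has zero mean because $\E\int_0^T\|Dv^n(T-t,\tilde Y^n(t))\|^2\|E_n(t-t_m)P_nQ^{1/2}\|_{\mathcal{L}_2(H,H_n)}^2\,\mathrm{d}t<\infty$, which follows from the uniform-in-$y$, exponentially decaying bound on $\|Dv^n\|$ obtained from Proposition~\ref{pro:regularityDvntx} with $\gamma=0$ together with \eqref{eq:A.and.Q.n} and \eqref{eq:semigroup.smooth.one}, while \eqref{eq:moment.bound.n} controls the $F$-contributions. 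The left-hand side telescopes to $\E[v^n(0,\tilde Y^n(T))]-\E[v^n(T,\tilde Y^n(0))]$, and using $v^n(0,\cdot)=\Phi$, $\tilde Y^n(T)=Y_M^n$, $\tilde Y^n(0)=X_0^n$ and $\E[v^n(T,X_0^n)]=\E[\Phi(X^n(T))]$ from \eqref{eq:kol.solu}, this equals $\E[\Phi(Y_M^n)]-\E[\Phi(X^n(T))]$; changing signs gives \eqref{eq:weak.error.representation}. There is no serious obstacle here: problem \eqref{eq:SDE} is finite-dimensional and every operator in sight is bounded, so no domain issues arise, and the only delicate point is sign bookkeeping — inserting $\partial_t[v^n(T-t,\cdot)]=-(\partial_s v^n)(T-t,\cdot)$ with the right sign and applying the drift cancellation at the running state $\tilde Y^n(t)$ rather than at the node $Y_m^n$ — which becomes transparent once the dynamics of $\tilde Y^n$ and the identity \eqref{eq:kol} are placed side by side.
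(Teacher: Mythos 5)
Your proposal is correct, but note that the paper does not actually prove this theorem: it simply checks that Assumptions \ref{ass:main.ass} and \ref{ass:main.ass2} meet the hypotheses of \cite[Theorem~2.2]{wang2016weak} and quotes the formula (with non-essential changes), which is the first of the two routes you mention. Your second route reconstructs the proof behind that citation, and it goes through: the interpolant \eqref{eq:continuous.extension} agrees with the scheme at the nodes, its dynamics $\dif{\tilde Y^{n}(t)}=\big(A_{n}\tilde Y^{n}(t)+E_{n}(t-t_{m})P_{n}F(Y_{m}^{n})\big)\diff{t}+E_{n}(t-t_{m})P_{n}\diff{W^{Q}(t)}$ follow from the deterministic product rule in the finite-dimensional space $H_{n}$, and applying It\^{o}'s formula to $v^{n}(T-t,\tilde Y^{n}(t))$ (legitimate since $v^{n}\in C_{b}^{1,2}([0,\infty)\times H_{n},\R)$ is the strict solution of \eqref{eq:kol}) together with the Kolmogorov equation cancels the $\langle Dv^{n},A_{n}\tilde Y^{n}(t)\rangle$ terms and leaves exactly the drift defect and the covariance defect appearing in \eqref{eq:weak.error.representation}. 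Your justification that the stochastic integral is a true martingale (boundedness of $Dv^{n}$ from Proposition \ref{pro:regularityDvntx} with $\gamma=0$, plus $P_{n}Q^{\frac{1}{2}}$ Hilbert--Schmidt) and the telescoping using $\tilde Y^{n}(t_{m+1})=Y_{m+1}^{n}$, $v^{n}(0,\cdot)=\Phi$ and $v^{n}(T,X_{0}^{n})=\E[\Phi(X^{n}(T))]$ are sound, and the final sign change reproduces \eqref{eq:weak.error.representation} exactly. In short, what the paper buys by citation you supply as a short self-contained derivation; the only point worth stating explicitly if you write it up is that continuity of $v^{n}$ and its derivatives up to $s=0$ (contained in the strict-solution property) is what allows It\^{o}'s formula on the last subinterval up to $t=T$.
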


An approximation result between $\tilde{Y}^{n}(t)$ and $Y_{m}^{n}$ is given by the following lemma.

\begin{lea}
      Suppose that Assumptions \ref{ass:main.ass} and \ref{ass:main.ass2} hold. Let $\tau < \tau_0 \leq \frac{\lambda_1-L_F}{4L^2}$ and let $\{Y_{m}^{n}\}_{m \in \N}$ and $\tilde{Y}^{n}(t)$ be given by \eqref{eq:EES} and \eqref{eq:continuous.extension}, respectively.
      Then for any $n, m \in \N$, there exists $C>0$ independent of $n,m$ such that
      \begin{equation}\label{eq:continuous.extension.approximation}
            \E\big[
              \|
                  \tilde{Y}^{n}(t) - Y_{m}^{n}
              \|^{2}\big]
            \leq
            C{\tau}^{\beta-\epsilon},
            \quad
            \forall\,t \in [t_{m},t_{m+1}].
      \end{equation}
\end{lea}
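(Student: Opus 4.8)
The plan is to decompose $\tilde{Y}^{n}(t)-Y_{m}^{n}$ according to the three summands in \eqref{eq:continuous.extension},
$$
\tilde{Y}^{n}(t)-Y_{m}^{n}
=\big(E_{n}(t-t_{m})-I\big)Y_{m}^{n}
+(t-t_{m})E_{n}(t-t_{m})P_{n}F(Y_{m}^{n})
+\int_{t_{m}}^{t}E_{n}(t-t_{m})P_{n}\diff{W^{Q}(s)},
$$
and then to bound the mean square of each of the three pieces separately by $C\tau^{\beta-\epsilon}$, using $\|a+b+c\|^{2}\le 3(\|a\|^{2}+\|b\|^{2}+\|c\|^{2})$ and $0\le t-t_{m}\le\tau$ for $t\in[t_{m},t_{m+1}]$.

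For the first, deterministic piece I would insert fractional powers of $-A_{n}$ and write $\big(E_{n}(t-t_{m})-I\big)Y_{m}^{n}=(-A_{n})^{-\gamma}\big(E_{n}(t-t_{m})-E_{n}(0)\big)(-A_{n})^{\gamma}Y_{m}^{n}$; the smoothing estimate \eqref{eq:semigroup.smooth.two} with $\rho=\gamma$ then gives $\|(E_{n}(t-t_{m})-I)Y_{m}^{n}\|\le C(t-t_{m})^{\gamma}\|(-A_{n})^{\gamma}Y_{m}^{n}\|\le C\tau^{\gamma}\|(-A_{n})^{\gamma}Y_{m}^{n}\|$. Choosing $\gamma=\tfrac{\beta-\epsilon}{2}<\tfrac{\beta}{2}$ and invoking the uniform moment bound \eqref{eq:bound.momnet} of Lemma~\ref{lem:bound.momnet} yields $\E\big[\|(E_{n}(t-t_{m})-I)Y_{m}^{n}\|^{2}\big]\le C\tau^{\beta-\epsilon}$ (equivalently, one may take $\gamma=\tfrac12$ and use Lemma~\ref{lem:bound.momnet.1/2}, since $\tau\cdot\tau^{\beta-\epsilon-1}=\tau^{\beta-\epsilon}$). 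For the second piece I would simply use the contraction property $\|E_{n}(t-t_{m})\|_{\mathcal{L}(H_{n})}\le 1$ together with $t-t_{m}\le\tau$ and the bound $\E[\|F(Y_{m}^{n})\|^{2}]\le C$ from \eqref{eq:bound.momnet.two}, obtaining $\E\big[\|(t-t_{m})E_{n}(t-t_{m})P_{n}F(Y_{m}^{n})\|^{2}\big]\le\tau^{2}\E[\|F(Y_{m}^{n})\|^{2}]\le C\tau^{2}\le C\tau^{\beta-\epsilon}$, the last step because $\beta-\epsilon<2$ and $\tau\le\tau_{0}$.

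For the third, stochastic piece I would apply It\^{o}'s isometry, noting that the integrand $E_{n}(t-t_{m})P_{n}$ is deterministic and constant in $s$, to get $\E\big[\|\int_{t_{m}}^{t}E_{n}(t-t_{m})P_{n}\diff{W^{Q}(s)}\|^{2}\big]=(t-t_{m})\,\|E_{n}(t-t_{m})P_{n}Q^{\frac12}\|_{\mathcal{L}_{2}(H,H_{n})}^{2}$, then factor $\|E_{n}(t-t_{m})P_{n}Q^{\frac12}\|_{\mathcal{L}_{2}(H,H_{n})}\le\|(-A_{n})^{\frac{1-\beta}{2}}E_{n}(t-t_{m})\|_{\mathcal{L}(H_{n})}\,\|(-A_{n})^{\frac{\beta-1}{2}}P_{n}Q^{\frac12}\|_{\mathcal{L}_{2}(H,H_{n})}$, bound the first factor by $C(t-t_{m})^{-(1-\beta)/2}$ via \eqref{eq:semigroup.smooth.one} and the second by a finite constant via \eqref{eq:A.and.Q.n}; this gives the bound $C(t-t_{m})^{\beta}\le C\tau^{\beta}\le C\tau^{\beta-\epsilon}$. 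Summing the three estimates then finishes the proof.

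There is no genuine obstacle here: the regularity and moment bounds of Lemmas~\ref{lem:bound.momnet} and \ref{lem:bound.momnet.1/2}, already established, do the heavy lifting. The only mild subtlety is how much spatial regularity to spend on the first term — one must stay strictly below $\beta/2$, which is precisely where the unavoidable $\epsilon$-loss enters (equivalently, one pays the negative power $\tau^{\beta-\epsilon-1}$ of Lemma~\ref{lem:bound.momnet.1/2} and recovers it from the factor $t-t_{m}\le\tau$). The first term is thus the bottleneck, contributing order $\tau^{\beta-\epsilon}$, while the second and third contributions are in fact of order $\tau^{\beta}$.
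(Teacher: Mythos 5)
Your proposal is correct and follows essentially the same route as the paper: the same three-term decomposition of $\tilde{Y}^{n}(t)-Y_{m}^{n}$, the elementary inequality $\|a+b+c\|^{2}\le 3(\|a\|^{2}+\|b\|^{2}+\|c\|^{2})$, It\^{o}'s isometry for the stochastic convolution, the smoothing estimates \eqref{eq:semigroup.smooth.one}--\eqref{eq:semigroup.smooth.two}, the noise condition \eqref{eq:A.and.Q.n}, and the moment bounds \eqref{eq:bound.momnet}--\eqref{eq:bound.momnet.two}. Your remark that the first term can equivalently be handled either with $\gamma=\tfrac{\beta-\epsilon}{2}$ via Lemma \ref{lem:bound.momnet} or with $\gamma=\tfrac12$ via Lemma \ref{lem:bound.momnet.1/2} is a correct observation, but it is the same estimate as in the paper up to bookkeeping.
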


\begin{proof}
      One can easily derive from \eqref{eq:continuous.extension} that
      \begin{equation}\label{eq:tildeYntYmn}
      \begin{split}
%            \tilde{Y}^{n}(t) - Y_{m}^{n}
%      =&
%            (E_{n}(t-t_{m})-I)Y_{m}^{n}
%            +
%            E_{n}(t-t_{m})
%            P_{n}F(Y_{m}^{n}) (t-t_{m})
%            \\&+
%            E_{n}(t-t_{m})
%            P_{n}\big(W^{Q}(t)-W^{Q}(t_{m})\big).
            \tilde{Y}^{n}(t) - Y_{m}^{n}
      =&
            (E_{n}(t-t_{m})-I)Y_{m}^{n}
            +
            \int_{t_{m}}^{t}
                E_{n}(t-t_{m})P_{n}F(Y_{m}^{n})
            \diff{s}
            +
            \int_{t_{m}}^{t}
                E_{n}(t-t_{m})P_{n}
            \diff{W^{Q}(s)}.
      \end{split}
      \end{equation}
      Using the inequality $|a+b+c|^2 \leq 3(|a|^2 + |b|^2 + |c|^2)$ for all $a,b,c \in \R$, we have
      \begin{equation*}
      \begin{split}
      \E\big[
      \| \tilde{Y}^{n}(t) - Y_{m}^{n} \|^{2}
      \big]
            \leq&
      3\E\big[
      \| (E_{n}(t-t_{m})-I)Y_{m}^{n} \|^{2}
      \big]
      +
      3\E\left[
      \Big\|
      \int_{t_{m}}^{t}
      E_{n}(t-t_{m})P_{n}F(Y_{m}^{n})
      \diff{s}
      \Big\|^{2}
      \right]
      \\&+
      3\E\left[
      \Big\|
      \int_{t_{m}}^{t}
      E_{n}(t-t_{m})P_{n}
      \diff{W^{Q}(s)}
      \Big\|^{2}
      \right].
      \end{split}
      \end{equation*}
      Then it follows from H\"older's inequality and It\^o's isometry that
      \begin{equation*}
      \begin{split}
            \E\big[
              \| \tilde{Y}^{n}(t) - Y_{m}^{n} \|^{2}
              \big]
%      \\=&
%            \E\Big[
%              \Big\|
%                  (E_{n}(t-t_{m})-I)Y_{m}^{n}
%                  +
%                  \int_{t_{m}}^{t}
%                      E_{n}(t-t_{m})P_{n}F(Y_{m}^{n})
%                  \diff{s}
%                  +
%                  \int_{t_{m}}^{t}
%                      E_{n}(t-t_{m})P_{n}
%                  \diff{W^{Q}(s)}
%              \Big\|^{2}
%              \Big]
      \leq&
%            3\E\big[
%               \| (E_{n}(t-t_{m})-I)Y_{m}^{n} \|^{2}
%               \big]
%            +
%            3\E\left[
%               \Big\|
%                   \int_{t_{m}}^{t}
%                       E_{n}(t-t_{m})P_{n}F(Y_{m}^{n})
%                   \diff{s}
%               \Big\|^{2}
%               \right]
%            \\&+
%            3\E\left[
%               \Big\|
%                   \int_{t_{m}}^{t}
%                       E_{n}(t-t_{m})P_{n}
%                   \diff{W^{Q}(s)}
%               \Big\|^{2}
%               \right]
%      \\\leq&
%            3\E\big[
%               \big\|(E_{n}(t-t_{m})-I)Y_{m}^{n}\big\|^{2}
%               \big]
%            +
%            3{\tau}
%            \int_{t_{m}}^{t}
%                \E\big[
%                  \big\| E_{n}(t-t_{m})P_{n}F(Y_{m}^{n}) \big\|^{2}
%                  \big]
%            \diff{s}
%            \\&+
%            3\int_{t_{m}}^{t}
%                 \E\big[
%                   \big\|
%                        E_{n}(t-t_{m})P_{n}Q^{\frac{1}{2}}
%                   \big\|_{\mathcal{L}_{2}(H;H_{n})}^{2}
%                   \big]
%             \diff{s}
%      \\=&
%            3\E\big[
%               \big\| (E_{n}(t-t_{m})-I)Y_{m}^{n} \big\|^{2}
%               \big]
%            +
%            3{\tau}(t-t_{m})
%            \E\big[
%              \big\| E_{n}(t-t_{m})P_{n}F(Y_{m}^{n}) \big\|^{2}
%              \big]
%            +
%            3(t-t_{m})
%            \big\|
%            E_{n}(t-t_{m}) P_{n}Q^{\frac{1}{2}}
%            \big\|_{\mathcal{L}_{2}(H;H_{n})}^{2}
%      \\\leq&
            3\E\big[
               \| (E_{n}(t-t_{m})-I)Y_{m}^{n} \|^{2}
               \big]
            +
            3{\tau}^{2}
            \E\big[
              \| E_{n}(t-t_{m})P_{n}F(Y_{m}^{n}) \|^{2}
              \big]
            \\&+
            3(t-t_{m})
            \big\|
            E_{n}(t-t_{m}) P_{n}Q^{\frac{1}{2}}
            \big\|_{\mathcal{L}_{2}(H,H_{n})}^{2}
%      \\=&
%            3\E\big[
%               \big\|
%                   (E_{n}(t-t_{m})-I)
%                   (-A_{n})^{-\frac{\beta-\epsilon}{2}}
%                   \cdot
%                   (-A_{n})^{\frac{\beta-\epsilon}{2}}
%                   Y_{m}^{n}
%               \big\|^{2}
%               \big]
%            \\&+
%            3{\tau}^{2}
%            \E\big[
%              \big\| E_{n}(t-t_{m})P_{n}F(Y_{m}^{n}) \big\|^{2}
%              \big]
%            \\&+
%            3{\tau}
%            \big\|
%            \underbrace{
%                E_{n}(t-t_{m})
%                (-A_{n})^{\frac{1-\beta}{2}}
%            }_{\in \mathcal{L}(H_{n})}
%            \underbrace{
%                (-A_{n})^{\frac{\beta-1}{2}}
%                P_{n}Q^{\frac{1}{2}}
%            }_{\in \mathcal{L}_{2}(H,H_{n})}
%            \big\|_{\mathcal{L}_{2}(H,H_{n})}^{2}
      \\\leq&
            3\big\|
                 (E_{n}(t-t_{m})-I)
                 (-A_{n})^{-\frac{\beta-\epsilon}{2}}
             \big\|_{\mathcal{L}(H_{n})}^{2}
%            {\color{red}
            \E\big[
              \|
                  (-A_{n})^{\frac{\beta-\epsilon}{2}} Y_{m}^{n}
              \|^{2}
              \big]
%            }
            \\&+
            3{\tau}^{2} \| E_{n}(t-t_{m}) \|_{\mathcal{L}(H_{n})}^{2}
            \E\big[\| F(Y_{m}^{n})\|^{2}\big]
            \\&+
            3(t-t_{m})
             \big\|
                 E_{n}(t-t_{m})
                 (-A_{n})^{\frac{1-\beta}{2}}
             \big\|_{\mathcal{L}(H_{n})}^{2}
             \big\|
                 (-A_{n})^{\frac{\beta-1}{2}}
                 P_{n}Q^{\frac{1}{2}}
             \big\|_{\mathcal{L}_{2}(H,H_{n})}^{2}.
%      \\\leq&
%            C{\tau}^{\beta-\epsilon}
%            +
%            C{\tau}^{2}
%            +
%            C{\tau}^{\beta}
%      \\\leq&
%            C{\tau}^{\beta-\epsilon}.
     \end{split}
     \end{equation*}
     One can further employ \eqref{eq:semigroup.smooth.one}--\eqref{eq:semigroup.smooth.two},
     \eqref{eq:bound.momnet}--\eqref{eq:bound.momnet.two}, \eqref{eq:A.and.Q.n} and the stability of the semigroup $\{E_{n}(t)\}_{t \geq 0}$ to get the desired result
     \eqref{eq:continuous.extension.approximation}.
\end{proof}

The next theorem gives a time-independent weak error.

\begin{thm}[\textbf{Temporal weak error}]\label{th:temporal.I}
      Suppose that Assumptions \ref{ass:main.ass} and \ref{ass:main.ass2} hold. Let $\tau < \tau_0 \leq \frac{\lambda_1-L_F}{4L^2}$ and
      let $\{X^{n}(t)\}_{t \geq 0}$ and $\{Y_{m}^{n}\}_{m \in \N}$ be given by \eqref{eq:SDE} and \eqref{eq:EES}, respectively.
      Then for any $T > 0$, $n,M \in \N$ and $\Phi \in C_{b}^{2}(H,\R)$ there exists $C>0$ independent of $T,n,M$ such that for any $T = M\tau$,
      \begin{equation}\label{eq:temporal.order.I}
            \big|
            \E[\Phi(X^{n}(T))]
            -
            \E[\Phi(Y_{M}^{n})]
            \big|
            \leq
            C{\tau}^{\beta-\epsilon}.
      \end{equation}
\end{thm}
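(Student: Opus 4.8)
Starting from the weak error representation formula \eqref{eq:weak.error.representation}, which expresses $\E[\Phi(X^{n}(T))]-\E[\Phi(Y_{M}^{n})]$ as a sum over $m=0,\ldots,M-1$ of two integrals over $[t_{m},t_{m+1}]$ --- one, call it $\mathcal{J}_{1}$, pairing the drift discrepancy $P_{n}F(\tilde{Y}^{n}(t))-E_{n}(t-t_{m})P_{n}F(Y_{m}^{n})$ with $Dv^{n}$, and one, $\mathcal{J}_{2}$, pairing the diffusion discrepancy $(P_{n}Q^{\frac12})(P_{n}Q^{\frac12})^{*}-(E_{n}(t-t_{m})P_{n}Q^{\frac12})(E_{n}(t-t_{m})P_{n}Q^{\frac12})^{*}$ with $\tr\{D^{2}v^{n}\cdot\}$. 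The guiding principle is that every elementary piece of each discrepancy is to be bounded by a product of a $\tau$-power $\tau^{\beta-\epsilon}$ (produced by a factor $I-E_{n}(t-t_{m})$ or by a stochastic increment), a factor that is integrable in $t$ and decays exponentially in $T-t$ (supplied by the regularity estimates \eqref{eq:kol.solu.FIRST}--\eqref{eq:kol.solu.SECOND} of Proposition \ref{pro:regularityDvntx}), and bounded moments of $Y_{m}^{n}$ (from Lemmas \ref{lem:bound.momnet} and \ref{lem:bound.momnet.1/2}); then $\sum_{m}\int_{t_{m}}^{t_{m+1}}$ collapses to $\int_{0}^{T}$ of the decaying weight, which is finite \emph{uniformly in} $T$ by the substitution $u=T-t$ and \eqref{eq:Gamma}. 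This is precisely how time-independence is secured and is what separates the present estimates from the finite-horizon ones in \cite{wang2016weak}.

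For $\mathcal{J}_{1}$ I would split the drift discrepancy as $[P_{n}F(\tilde{Y}^{n}(t))-P_{n}F(Y_{m}^{n})]+[(I-E_{n}(t-t_{m}))P_{n}F(Y_{m}^{n})]$. The second bracket is paired with $Dv^{n}$; moving $(-A_{n})^{1-\epsilon}$ onto $Dv^{n}$ via \eqref{eq:kol.solu.FIRST} and $(-A_{n})^{-(1-\epsilon)}$ onto $I-E_{n}(t-t_{m})$ via \eqref{eq:semigroup.smooth.two}, and using $\E[\|F(Y_{m}^{n})\|]\le C$ from \eqref{eq:bound.momnet.two}, gives $C\tau^{1-\epsilon}\le C\tau^{\beta-\epsilon}$. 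For the first bracket I Taylor-expand $F$ around $Y_{m}^{n}$ to second order: the quadratic remainder is controlled by \eqref{eq:F.derivative.second.n} (placing $(-A_{n})^{\eta}$ on $Dv^{n}$) together with the mean-square estimate \eqref{eq:continuous.extension.approximation}, yielding $C\tau^{\beta-\epsilon}$; what survives is the linear term $\E[\langle Dv^{n}(T-t,\tilde{Y}^{n}(t)),P_{n}F'(Y_{m}^{n})(\tilde{Y}^{n}(t)-Y_{m}^{n})\rangle]$, where I decompose $\tilde{Y}^{n}(t)-Y_{m}^{n}$ (via \eqref{eq:continuous.extension}) into the $\F_{t_{m}}$-measurable part $(E_{n}(t-t_{m})-I)Y_{m}^{n}+(t-t_{m})E_{n}(t-t_{m})P_{n}F(Y_{m}^{n})$ and the centered stochastic increment $\int_{t_{m}}^{t}E_{n}(t-t_{m})P_{n}\diff{W^{Q}(s)}$.

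The $(t-t_{m})$-piece is bounded crudely by $C\tau$ using \eqref{eq:F.derivative.one}, \eqref{eq:bound.momnet.two} and \eqref{eq:kol.solu.FIRST}. For the piece $(E_{n}(t-t_{m})-I)Y_{m}^{n}$ I invoke the anisotropic bound \eqref{eq:F.derivative.one.delta.n}: move $(-A_{n})^{\delta/2}$ onto $Dv^{n}$ (legitimate since $\delta<2$, by \eqref{eq:kol.solu.FIRST}), measure $(E_{n}(t-t_{m})-I)Y_{m}^{n}$ in $\dot{H}^{-1}$, where \eqref{eq:semigroup.smooth.two} and \eqref{eq:bound.momnet} contribute $(t-t_{m})^{(1+\beta-\epsilon)/2}$, and pay the price $(1+\|\cdot\|_{1})$, which by Lemma \ref{lem:bound.momnet.1/2} and the analogous $\dot{H}^{1}$-bound for $\tilde{Y}^{n}(t)$ costs at most $(t-t_{m})^{-(1-\beta+\epsilon)/2}$; the two powers combine to $(t-t_{m})^{\beta-\epsilon}\le\tau^{\beta-\epsilon}$. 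For the stochastic increment I exploit its conditional mean-zero property: Taylor-expand $Dv^{n}(T-t,\tilde{Y}^{n}(t))$ around the $\F_{t_{m}}$-measurable ``drift-only'' interpolant $E_{n}(t-t_{m})(Y_{m}^{n}+(t-t_{m})P_{n}F(Y_{m}^{n}))$; the zeroth-order term has conditional expectation zero given $\F_{t_{m}}$ and hence vanishes after taking $\E$, while the $D^{2}v^{n}$-remainder is $O\big(\E[\|\int_{t_{m}}^{t}E_{n}(t-t_{m})P_{n}\diff{W^{Q}(s)}\|^{2}]\big)=O(\tau^{\beta})$ by It\^{o}'s isometry, \eqref{eq:semigroup.smooth.one}, \eqref{eq:A.and.Q.n} and \eqref{eq:kol.solu.SECOND} with $\gamma_{1}=\gamma_{2}=0$. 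This conditional-mean-zero step --- which upgrades the naive ``strong'' bound $\tau^{\beta/2}$ to the required $\tau^{\beta-\epsilon}$ --- together with the delicate balancing of the $\dot{H}^{1}$-blow-up of Lemma \ref{lem:bound.momnet.1/2} against the $(t-t_{m})$-gains is where I expect the main difficulty to lie.

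Finally $\mathcal{J}_{2}$ is treated exactly as the terms $I_{2}$ and $I_{3}$ in the proof of Theorem \ref{th:spatial}: write the diffusion discrepancy as $(I-E_{n}(t-t_{m}))(P_{n}Q^{\frac12})(P_{n}Q^{\frac12})^{*}+(E_{n}(t-t_{m})P_{n}Q^{\frac12})((I-E_{n}(t-t_{m}))P_{n}Q^{\frac12})^{*}$, and for each summand distribute powers of $-A_{n}$ so that both $Q^{\frac12}$-factors carry $(-A_{n})^{(\beta-1)/2}$ and are therefore Hilbert--Schmidt by \eqref{eq:A.and.Q.n}, so that $I-E_{n}(t-t_{m})$ absorbs $\tau^{\beta-\epsilon}$ via \eqref{eq:semigroup.smooth.two}, and so that $D^{2}v^{n}$ absorbs the leftover $(-A_{n})^{1-\epsilon}$-worth via \eqref{eq:kol.solu.SECOND}; then conclude with \eqref{eq:.operator.Gamma.one}--\eqref{eq:.operator.Gamma.three} and the integration as above. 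Adding up the contributions of $\mathcal{J}_{1}$ and $\mathcal{J}_{2}$ yields \eqref{eq:temporal.order.I} with a constant independent of $T=M\tau$, $n$ and $M$.
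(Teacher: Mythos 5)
Your proposal is correct and follows essentially the same route as the paper's proof: the same representation formula \eqref{eq:weak.error.representation}, the same splitting into drift terms ($F$-difference plus $(I-E_{n}(t-t_{m}))P_{n}F(Y_{m}^{n})$) and the two trace terms treated like $I_{2},I_{3}$, the same use of Proposition \ref{pro:regularityDvntx}, Lemmas \ref{lem:bound.momnet}, \ref{lem:bound.momnet.1/2} and \eqref{eq:continuous.extension.approximation}, the balancing of \eqref{eq:F.derivative.one.delta.n} against the $\dot{H}^{1}$-blow-up, the conditional-mean-zero cancellation of the Wiener increment, and the exponentially decaying weights integrated via \eqref{eq:Gamma} for $T$-uniformity. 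The only (harmless) organizational difference is that the paper first freezes the gradient at $Y_{m}^{n}$ (splitting off $J_{11}^{m}$, which carries the $O(\tau^{\beta-\epsilon})$ correction) so the stochastic-increment term in $J_{12}^{ma}$ vanishes outright by $\F_{t_{m}}$-measurability, whereas you keep $Dv^{n}$ at $\tilde{Y}^{n}(t)$ and recover the same cancellation by a secondary expansion around a drift-only $\F_{t_{m}}$-measurable interpolant.
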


\begin{proof}
%      According to the weak error representation formula
%      \eqref{eq:weak.error.representation}, the weak error at
%      time $T = M{\tau}$ can be decomposed with a telescoping sum
      We first use \eqref{eq:weak.error.representation} to decompose the weak error at time $T = M{\tau}$ as follows
      \begin{equation}
            \label{eq:telescoping}
      \begin{split}
            &\E\big[\Phi(X^{n}(T))\big]
            -
            \E\big[\Phi(Y_{M}^{n})\big]
     \\=&
           \sum\limits_{m=0}^{M-1}
              \int_{t_{m}}^{t_{m+1}}
                  \E\big[
                            \big\langle
                                Dv^{n}(T-t,\tilde{Y}^{n}(t))
                                ,
                                P_{n}F(\tilde{Y}^{n}(t))
                                -
                                P_{n}F(Y_{m}^{n})
                            \big\rangle
                    \big]
              \diff{t}
              \\&~~~~~+
              \int_{t_{m}}^{t_{m+1}}
                  \E\big[
                            \big\langle
                                Dv^{n}(T-t,\tilde{Y}^{n}(t))
                                ,
                                \big(I - E_{n}(t-t_{m})\big)
                                P_{n}F(Y_{m}^{n})
                            \big\rangle
                    \big]
              \diff{t}
               \\&~~+
               \frac{1}{2}
               \int_{t_{m}}^{t_{m+1}}
                   \E\big[
                            \tr
                            \big\{
                                     D^{2}v^{n}(T-t,\tilde{Y}^{n}(t))
                                     \big(I - E_{n}(t-t_{m})\big)
                                     \big(P_{n}Q^{\frac{1}{2}}\big)
                                     \big(P_{n}Q^{\frac{1}{2}}\big)^{*}
                            \big\}
                     \big]
               \diff{t}
               \\&~~+
               \frac{1}{2}
               \int_{t_{m}}^{t_{m+1}}
                   \E\big[
                            \tr
                            \big\{
                                     D^{2}v^{n}(T-t,\tilde{Y}^{n}(t))
                                     E_{n}(t-t_{m})
                                     \big(P_{n}Q^{\frac{1}{2}}\big)
                                     \big((I-E_{n}(t-t_{m}))
                                     (P_{n}Q^{\frac{1}{2}})\big)^{*}
                            \big\}
                     \big]
               \diff{t}
     \\:=&\sum\limits_{m=0}^{M-1} J_{1}^{m}+J_{2}^{m}+J_{3}^{m}+J_{4}^{m}.
      \end{split}
      \end{equation}
      Below we will estimate these terms separately. For $J_{1}^{m}$, further decomposition leads to
      \begin{equation}
            \label{eq:J.one}
      \begin{split}
            |J_{1}^{m}|
%      :=&
%            \Big|
%            \int_{t_{m}}^{t_{m+1}}
%                \E\big[
%                          Dv^{n}(T-t,\tilde{Y}^{n}(t))
%                          \cdot
%                          \big(
%                           P_{n}F(\tilde{Y}^{n}(t))
%                           -
%                           P_{n}F(Y_{m}^{n})
%                           \big)
%                  \big]
%            \diff{t}
%            \Big|
%      \\=&
%            \Big|
%            \int_{t_{m}}^{t_{m+1}}
%                \E\big[
%                          \big(
%                          Dv^{n}(T-t,\tilde{Y}^{n}(t))
%                          -
%                          Dv^{n}(T-t,Y_{m}^{n})
%                          \big)
%                          \cdot
%                          \big(
%                          P_{n}F(\tilde{Y}^{n}(t))
%                          -
%                          P_{n}F(Y_{m}^{n})
%                          \big)
%                  \big]
%            \diff{t}
%            \\&+
%            \int_{t_{m}}^{t_{m+1}}
%                \E\big[
%                          Dv^{n}(T-t,Y_{m}^{n})
%                          \cdot
%                          \big(
%                           P_{N}F(\tilde{Y}^{n}(t))
%                           -
%                           P_{N}F(Y_{m}^{n})
%                           \big)
%                  \big]
%            \diff{t}
%            \Big|
%      \\&\because~~a = a - b + b
      \leq&
%            \underbrace{
            \Big|
            \int_{t_{m}}^{t_{m+1}}
                \E\big[
                          \big\langle
                          Dv^{n}(T-t,\tilde{Y}^{n}(t))
                          -
                          Dv^{n}(T-t,Y_{m}^{n})
                          ,
                          P_{n}F(\tilde{Y}^{n}(t))
                          -
                          P_{n}F(Y_{m}^{n})
                          \big\rangle
                  \big]
            \diff{t}
            \Big|
%            }_{=:J_{11}^{m}}
            \\&+
%            \underbrace{
            \Big|
            \int_{t_{m}}^{t_{m+1}}
                \E\big[\big\langle
                          Dv^{n}(T-t,Y_{m}^{n})
                          ,
                           P_{n}F(\tilde{Y}^{n}(t))
                           -
                           P_{n}F(Y_{m}^{n})
                  \big\rangle\big]
            \diff{t}
            \Big|
%            }_{=:J_{12}}
      :=J_{11}^{m}+J_{12}^{m}.
      \end{split}
      \end{equation}
      Applying Taylor's formula in Banach space, \eqref{eq:kol.solu.SECOND} with
      $\gamma_{1}=0,\gamma_{2}=0$, \eqref{eq:F.derivative.one} and \eqref{eq:continuous.extension.approximation} to $J_{11}^{m}$, we get
      \begin{equation}
            \label{eq:J.one.one}
      \begin{split}
            J_{11}^{m}
      \leq&
            \int_{t_{m}}^{t_{m+1}}\int_{0}^{1}
                \E\big[
                          \big|\big\langle
                          D^{2}v^{n}(T-t,\chi(r))
                          (\tilde{Y}^{n}(t)-Y_{m}^{n})
                           ,
                          P_{n}F(\tilde{Y}^{n}(t))
                          -
                          P_{n}F(Y_{m}^{n})
                          \big\rangle\big|
                         \big]
             \diff{r}\diff{t}
%      \\&\because~~
%                |D^{2}v^{n}(t,x) \cdot (g,k)|
%                \leq
%                C_{\gamma_{1},\gamma_{2}}
%                \|(-A)^{-\gamma_{1}}g\|
%                \|(-A)^{-\gamma_{2}}k\|
%                (1+t^{-\eta}+t^{-(\gamma_{1}+\gamma_{2})})
%                e^{-\tilde{c}t}.
      \\\leq&
            C
            \int_{t_{m}}^{t_{m+1}}
                (1+(T-t)^{-\eta})e^{-\tilde{c}(T-t)}
                \E\big[
                          \|\tilde{Y}^{n}(t)-Y_{m}^{n}\|
                          \|
                          P_{n}F(\tilde{Y}^{n}(t))
                          -
                          P_{n}F(Y_{m}^{n})
                          \|
                  \big]
            \diff{t}
      \\\leq&
            C
            \int_{t_{m}}^{t_{m+1}}
                (1+(T-t)^{-\eta})e^{-\tilde{c}(T-t)}
                \E\big[
                  \|\tilde{Y}^{n}(t)-Y_{m}^{n}\|^{2}
                  \big]
            \diff{t}
      \\\leq&
            C{\tau}^{\beta-\epsilon}
            \int_{t_{m}}^{t_{m+1}}
                (1+(T-t)^{-\eta})e^{-\tilde{c}(T-t)}
            \diff{t},
      \end{split}
      \end{equation}
      where $\chi(r):=Y_{m}^{n} + r(\tilde{Y}^{n}(t)-Y_{m}^{n}), \forall\,r \in [0,1]$.
      Using Taylor's formula in Banach space again further decomposes $J_{12}^{m}$ as follows
      \begin{equation}
            \label{eq:J.one.two}
      \begin{split}
            J_{12}^{m}
%      =&
%            \Big|
%            \int_{t_{m}}^{t_{m+1}}
%                \E\big[
%                          Dv^{n}(T-t,Y_{m}^{n})
%                          \cdot
%                          \big(
%                           P_{n}F(\tilde{Y}^{n}(t))
%                           -
%                           P_{n}F(Y_{m}^{n})
%                           \big)
%                  \big]
%            \diff{t}
%            \Big|
%                           \\&\because
%                           P_{n}F(\tilde{Y}^{n}(t))
%                           -
%                           P_{n}F(Y_{m}^{n})
%                           =
%                           P_{n}F'(Y_{m}^{n})
%                           (\tilde{Y}^{n}(t)-Y_{m}^{n})
%                           \\&+
%                           \int_{0}^{1}
%                               P_{n}F''
%                               (Y_{m}^{n}+r(\tilde{Y}^{n}(t)-Y_{m}^{n}))
%                               (
%                                   \tilde{Y}^{n}(t)-Y_{m}^{n}
%                                   ,
%                                   \tilde{Y}^{n}(t)-Y_{m}^{n}
%                               )
%                               (1-r)
%                           \diff{r}
      \leq&
            \Big|
            \int_{t_{m}}^{t_{m+1}}
                \E\big[
                          \big\langle
                          Dv^{n}(T-t,Y_{m}^{n})
                          ,
                          P_{n}F'(Y_{m}^{n})
                          (\tilde{Y}^{n}(t)-Y_{m}^{n})
                          \big\rangle
                  \big]
            \diff{t}
            \Big|
      \\&+
            \Big|
            \int_{t_{m}}^{t_{m+1}}
                \E\big[
                          \big\langle
                          Dv^{n}(T-t,Y_{m}^{n})
                          ,
                          \int_{0}^{1}
                               P_{n}F''
                               (Y_{m}^{n}+r(\tilde{Y}^{n}(t)-Y_{m}^{n}))
                               \\&~~~
                               (
                                   \tilde{Y}^{n}(t)-Y_{m}^{n}
                                   ,
                                   \tilde{Y}^{n}(t)-Y_{m}^{n}
                               )
                               (1-r)
                          \diff{r}
                          \big\rangle
                  \big]
            \diff{t}
            \Big|
      :=J_{12}^{ma}+J_{12}^{mb}.
      \end{split}
      \end{equation}
%      One can easily derive from \eqref{eq:continuous.extension} that
%      \begin{equation}
%      \begin{split}
%            \tilde{Y}^{n}(t) - Y_{m}^{n}
%      =&
%            (E_{n}(t-t_{m})-I)Y_{m}^{n}
%            +
%            E_{n}(t-t_{m})
%            P_{n}F(Y_{m}^{n}) (t-t_{m})
%            \\&+
%            E_{n}(t-t_{m})
%            P_{n}\big(W^{Q}(t)-W^{Q}(t_{m})\big).
%      \end{split}
%      \end{equation}
      By the Cauchy--Schwarz inequality, \eqref{eq:kol.solu.FIRST}, \eqref{eq:F.derivative.one.delta.n} and \eqref{eq:F.derivative.one}, we can derive from \eqref{eq:tildeYntYmn} that
      \begin{equation*}\label{eq:J.one.two.a.step.one}
      \begin{split}
            J_{12}^{ma}
      \leq&
            \Big|
            \int_{t_{m}}^{t_{m+1}}
                \E\big[
                          \big\langle
                          Dv^{n}(T-t,Y_{m}^{n})
                          ,
                           P_{n}F'(Y_{m}^{n})
                           (E_{n}(t-t_{m})-I)Y_{m}^{n}
                          \big\rangle
                  \big]
            \diff{t}
            \Big|
      \\&+
            \Big|
            \int_{t_{m}}^{t_{m+1}}
                \E\big[
                          \big\langle
                          Dv^{n}(T-t,Y_{m}^{n})
                          ,
                              P_{n}F'(Y_{m}^{n})
                              %\big(
                                  E_{n}(t-t_{m})
                                  P_{n}F(Y_{m}^{n})
                                  (t-t_{m})
                              %\big)
                          \big\rangle
                  \big]
            \diff{t}
            \Big|
      \\\leq&
            C\int_{t_{m}}^{t_{m+1}}
                 \big(1+(T-t)^{-\frac{\delta}{2}}\big)
                 e^{-\tilde{c}(T-t)}
                 \E\big[
                   \|
                          (-A_{n})^{-\frac{\delta}{2}}
                          P_{n}F'(Y_{m}^{n})
                          (E_{n}(t-t_{m})-I)Y_{m}^{n}
                   \|
                   \big]
             \diff{t}
      \\&+
            C\tau
            \int_{t_{m}}^{t_{m+1}}
                e^{-\tilde{c}(T-t)}
                \E\big[
                  \|
                          P_{n}F'(Y_{m}^{n})
                          %\big(
                              E_{n}(t-t_{m})
                              P_{n}F(Y_{m}^{n})
                          %\big)
                  \|
                  \big]
            \diff{t}
      \\\leq&
            C\int_{t_{m}}^{t_{m+1}}
                 \big(1+(T-t)^{-\frac{\delta}{2}}\big)
                 e^{-\tilde{c}(T-t)}
                 \E\big[
                   (1+\|Y_{m}^{n}\|_{1})
                   \|(E_{n}(t-t_{m})-I)Y_{m}^{n}\|_{-1}
                   \big]
             \diff{t}
      \\&+
            C\tau
            \int_{t_{m}}^{t_{m+1}}
                e^{-\tilde{c}(T-t)}
                \E\big[
                  \|E_{n}(t-t_{m})P_{n}F(Y_{m}^{n})\|
                  \big]
            \diff{t}.
      \end{split}
      \end{equation*}
      Applying H\"older's inequality, elementary inequality, the stability of the
      semigroup $\{E_{n}(t)\}_{t \geq 0}$ and \eqref{eq:bound.momnet.two},
      we can deduce that
      \begin{equation}
            \label{eq:J.one.two.a.step.two}
      \begin{split}
            J_{12}^{ma}
 %                      \\&\because
%                          %\label{eq:F.derivative.one.delta}
%                          \|(-A)^{-\frac{\delta}{2}}F'(\varphi)\psi\|
%                          \leq
%                          L(1+\|\varphi\|_{1})\|\psi\|_{-1},
%                          \quad \varphi \in \dot{H}^{1}, \psi \in H,
%                       \\&\because
%                          %\label{eq:F.derivative.one}
%                          \|F'(\varphi)\psi\|
%                          \leq
%                          L\|\psi\|,\quad \varphi,\psi \in H,
      \leq&
%            C\int_{t_{m}}^{t_{m+1}}
%                 \big(1+(T-t)^{-\frac{\delta}{2}}\big)
%                 e^{-\tilde{c}(T-t)}
%                 \E\big[
%                   (1+\|Y_{m}^{n}\|_{1})
%                   \|(E_{n}(t-t_{m})-I)Y_{m}^{n}\|_{-1}
%                   \big]
%             \diff{t}
%      \\&+
%            C\tau
%            \int_{t_{m}}^{t_{m+1}}
%                e^{-\tilde{c}(T-t)}
%                \E\big[
%                  \|E_{n}(t-t_{m})P_{n}F(Y_{m}^{n})\|
%                  \big]
%            \diff{t}
%                       \\&\because
%                          \text{H\"older's inequality,~
%                                $C_{p}$ inequality,
%                                the stability of semigroup,
%                                \eqref{eq:bound.momnet.two}}
%      \\\leq&
            C\int_{t_{m}}^{t_{m+1}}
                 \big(1+(T-t)^{-\frac{\delta}{2}}\big)
                 e^{-\tilde{c}(T-t)}
                 \big(
                     1
                     +
                     \big(
                         \E\big[\|Y_{m}^{n}\|_{1}^{2}\big]
                     \big)^{\frac{1}{2}}
                 \big)
                 \big(
                 \E\big[
                   \|
                     (-A_{n})^{\frac{\beta-\epsilon}{2}}Y_{m}^{n}
                   \|^{2}
                   \big]
                 \big)^{\frac{1}{2}}
                 \\&
                 \|
                    (-A_{n})^{-\frac{1+\beta-\epsilon}{2}}
                    (E_{n}(t-t_{m})-I)
                 \|_{\mathcal{L}(H_{n})}
             \diff{t}
            +
            C\tau
            \int_{t_{m}}^{t_{m+1}}
                e^{-\tilde{c}(T-t)}
            \diff{t}
%                       \\&\because
%                          \big(
%                          \E\big[\|Y_{m}^{n}\|_{1}^{2}\big]
%                          \big)^{\frac{1}{2}}
%                          \leq
%                          C\tau^{\frac{\beta-1-\varepsilon}{2}}
%                       \\&\because
%                          \|
%                          (-A_{n})^{-\frac{1+\beta-\varepsilon}{2}}
%                          (E_{n}(t-t_{m})-I)
%                          \|
%                          \leq
%                          C(t-t_{m})^{\frac{1+\beta-\varepsilon}{2}}
%                          \leq
%                          C{\tau}^{\frac{1+\beta-\varepsilon}{2}}
%                       \\&\because
%                          \big(
%                          \E\big[
%                            \|
%                            (-A_{n})^{\frac{\beta-\varepsilon}{2}}
%                            Y_{m}^{n}
%                            \|^{2}
%                            \big]
%                          \big)^{\frac{1}{2}}
%                          \leq
%                          C
      \\\leq&
            C{\tau}^{\beta-\epsilon}
             \int_{t_{m}}^{t_{m+1}}
                 \big(1+(T-t)^{-\frac{\delta}{2}}\big)
                 e^{-\tilde{c}(T-t)}
             \diff{t}
            +
            C\tau
            \int_{t_{m}}^{t_{m+1}}
                e^{-\tilde{c}(T-t)}
            \diff{t}
      \\\leq&
            C{\tau}^{\beta-\epsilon}
             \int_{t_{m}}^{t_{m+1}}
                 \big(1+(T-t)^{-\frac{\delta}{2}}\big)
                 e^{-\tilde{c}(T-t)}
             \diff{t},
      \end{split}
      \end{equation}
      where \eqref{eq:semigroup.smooth.two}, Lemmas \ref{lem:bound.momnet} and \ref{lem:bound.momnet.1/2} were employed in the second step.
      Thanks to \eqref{eq:kol.solu.FIRST},
      \eqref{eq:F.derivative.second.n}
      and \eqref{eq:continuous.extension.approximation}, one gets
      \begin{equation}\label{eq:J.one.two.b}
      \begin{split}
            J_{12}^{mb}
      \leq&
            C
            \int_{t_{m}}^{t_{m+1}}
            \int_{0}^{1}
                \E\big[
                          \|
                          (-A_{n})^{-\eta}
                          P_{n}F''
                          (Y_{m}^{n}+r(\tilde{Y}^{n}(t)-Y_{m}^{n}))
                          \\&~~~
                          (
                                \tilde{Y}^{n}(t)-Y_{m}^{n}
                                ,
                                \tilde{Y}^{n}(t)-Y_{m}^{n}
                          )
                          \|
                  \big]
                          \big(1+(T-t)^{-\eta}\big)
                          e^{-\tilde{c}(T-t)}(1-r)
            \diff{r}
            \diff{t}
      \\\leq&
            C
            \int_{t_{m}}^{t_{m+1}}
                \E\big[
                  \|\tilde{Y}^{n}(t)-Y_{m}^{n}\|^{2}
                  \big]
                 \big(1+(T-t)^{-\eta}\big)
                 e^{-\tilde{c}(T-t)}
            \diff{t}
      \\\leq&
            C{\tau}^{\beta-\epsilon}
            \int_{t_{m}}^{t_{m+1}}
                \big(1+(T-t)^{-\eta}\big)
                e^{-\tilde{c}(T-t)}
            \diff{t}.
      \end{split}
      \end{equation}
      Putting \eqref{eq:J.one.two.a.step.two}--\eqref{eq:J.one.two.b} into \eqref{eq:J.one.two} implies
      \begin{equation*}
            \label{eq:J.one.two.outcome}
            J_{12}^{m}
      \leq
            C{\tau}^{\beta-\epsilon}
            \int_{t_{m}}^{t_{m+1}}
                \big(1+(T-t)^{-\frac{\delta}{2}}+(T-t)^{-\eta}\big)
                e^{-\tilde{c}(T-t)}
            \diff{t},
      \end{equation*}
      which together with \eqref{eq:J.one}--\eqref{eq:J.one.one}
      leads to
      \begin{equation}
            \label{eq:J.one.outcome}
            |J_{1}^{m}|
      \leq
            C{\tau}^{\beta-\epsilon}
            \int_{t_{m}}^{t_{m+1}}
                \big(1+(T-t)^{-\frac{\delta}{2}}+(T-t)^{-\eta}\big)
                e^{-\tilde{c}(T-t)}
            \diff{t}.
      \end{equation}
      As to $J_{2}^{m}$, with the help of \eqref{eq:kol.solu.FIRST},
      \eqref{eq:bound.momnet.two} and \eqref{eq:semigroup.smooth.two},
      we can conclude that
      \begin{equation}
            \label{eq:J.two.outcome}
      \begin{split}
            |J_{2}^{m}|
%      :=&
%            \Big|
%            \int_{t_{m}}^{t_{m+1}}
%                \E\big[
%                          Dv^{n}(T-t,\tilde{Y}^{n}(t))
%                          \cdot
%                          \big(I - E_{n}(t-t_{m})\big)
%                          P_{n}F(Y_{m}^{n})
%                  \big]
%            \diff{t}
%            \Big|
%      \\=&
%            \Big|
%            \int_{t_{m}}^{t_{m+1}}
%                \E\big[
%                          Dv^{n}(T-t,\tilde{Y}^{n}(t))
%                          (-A_{n})^{1-\epsilon}
%                          \cdot
%                          (-A_{n})^{-(1-\epsilon)}
%                          \big(I - E_{n}(t-t_{m})\big)
%                          P_{n}F(Y_{m}^{n})
%                  \big]
%            \diff{t}
%            \Big|
%      \\\leq&
%            \int_{t_{m}}^{t_{m+1}}
%                \E\big[\big|
%                          Dv^{n}(T-t,\tilde{Y}^{n}(t))
%                          (-A_{n})^{1-\epsilon}
%                          \cdot
%                          (-A_{n})^{-(1-\epsilon)}
%                          \big(I - E_{n}(t-t_{m})\big)
%                          P_{n}F(Y_{m}^{n})
%                  \big|\big]
%            \diff{t}
      \leq&
            \int_{t_{m}}^{t_{m+1}}
                \E\big[
                       \|
                          (-A_{n})^{1-\epsilon}
                          Dv^{n}(T-t,\tilde{Y}^{n}(t))
                       \|
                        \\&~~~~~~~~~\cdot
                       \|
                          (-A_{n})^{-(1-\epsilon)}
                          (I - E_{n}(t-t_{m}))
                       \|_{\mathcal{L}_{H_{n}}}
                          \cdot
                       \|P_{n}F(Y_{m}^{n})\|
                       \big]
            \diff{t}
%      \\\leq&
%            \int_{t_{m}}^{t_{m+1}}
%                \E\big[
%                       \underbrace{
%                       \big\|
%                          Dv^{n}(T-t,\tilde{Y}^{n}(t))
%                          (-A_{n})^{1-\epsilon}
%                       \big\|
%                       }_{\leq C(1+(T-t)^{-(1-\epsilon)})
%                               e^{-\tilde{c}(T-t)}}
%                          \cdot
%                       \underbrace{
%                       \big\|
%                          (-A_{n})^{-(1-\epsilon)}
%                          \big(I - E_{n}(t-t_{m})\big)
%                       \big\|
%                       }_{\leq C(t-t_{m})^{1-\epsilon}
%                          \leq C{\tau}^{1-\epsilon}}
%                          \cdot
%                       \big\|
%                          P_{n}F(Y_{m}^{n})
%                  \big\|\big]
%            \diff{t}
      \\\leq&
            C{\tau}^{1-\epsilon}
            \int_{t_{m}}^{t_{m+1}}
                \big(1+(T-t)^{-(1-\epsilon)}\big)e^{-\tilde{c}(T-t)}
            \diff{t}.
      \end{split}
      \end{equation}
      Concerning $J_{3}^{m}$, we employ \eqref{eq:.operator.Gamma.one}, \eqref{eq:.operator.Gamma.three} and the self-adjointness of $A_{n}$ to obtain
      \begin{equation*}
      \begin{split}
            |J_{3}^{m}|
      =&
            \Big|
            \frac{1}{2}
            \int_{t_{m}}^{t_{m+1}}
                \E\big[
                         \tr
                         \big\{
                                 (-A_{n})^{\frac{1-\beta}{2}}
                                 D^{2}v^{n}(T-t,\tilde{Y}^{n}(t))
                                 \big(I - E_{n}(t-t_{m})\big)
                                 \\&\cdot
                                 \big(P_{n}Q^{\frac{1}{2}}\big)
                                 \big((-A_{n})^{\frac{\beta-1}{2}}
                                        P_{n}Q^{\frac{1}{2}}\big)^{*}
                         \big\}
                  \big]
            \diff{t}
            \Big|
%      \\\leq&
%            \frac{1}{2}
%            \int_{t_{m}}^{t_{m+1}}
%                \E\big[
%                         \big\|
%                                 (-A_{n})^{\frac{1-\beta}{2}}
%                                 D^{2}v^{n}(T-t,\tilde{Y}^{n}(t))
%                                 \big(I - E_{n}(t-t_{m})\big)
%                                 \\&
%                                 \big(P_{n}Q^{\frac{1}{2}}\big)
%                                 \big((-A_{n})^{\frac{\beta-1}{2}}
%                                        P_{n}Q^{\frac{1}{2}}\big)^{*}
%                         \big\|_{\mathcal{L}_{1}(H;H_{n})}
%                  \big]
%            \diff{t}
      \\\leq&
            \frac{1}{2}
            \int_{t_{m}}^{t_{m+1}}
                \E\big[
                         \big\|
                                 (-A_{n})^{\frac{1-\beta}{2}}
                                 D^{2}v^{n}(T-t,\tilde{Y}^{n}(t))
                                 (-A_{n})^{\frac{1+\beta}{2}-\epsilon}
                         \big\|_{\mathcal{L}(H_{n})}
      \\&\cdot
                         \big\|
                                 (-A_{n})^{-\frac{1+\beta}{2}+\epsilon}
                                 \big(I - E_{n}(t-t_{m})\big)
                                 \big(P_{n}Q^{\frac{1}{2}}\big)
                                 \big((-A_{n})^{\frac{\beta-1}{2}}
                                       P_{n}Q^{\frac{1}{2}}\big)^{*}
                         \big\|_{\mathcal{L}_{1}(H_{n})}
                  \big]
            \diff{t}
      \\\leq&
            \frac{1}{2}
            \int_{t_{m}}^{t_{m+1}}
                \E\big[
                  \big\|
                      (-A_{n})^{\frac{1-\beta}{2}}
                      D^{2}v^{n}(T-t,\tilde{Y}^{n}(t))
                      (-A_{n})^{\frac{1+\beta}{2}-\epsilon}
                  \big\|_{\mathcal{L}(H_{n})}
      \\&~~~~~~~~~~~~~\cdot
            \|
                (-A_{n})^{-(\beta-\epsilon)}
                \big(I - E_{n}(t-t_{m})\big)
            \|_{\mathcal{L}(H_{n})}
      \\&~~~~~~~~~~~~~\cdot
            \big\|
            \big((-A_{n})^{\frac{\beta-1}{2}}
                   P_{n}Q^{\frac{1}{2}}\big)
            \big((-A_{n})^{\frac{\beta-1}{2}}
                   P_{n}Q^{\frac{1}{2}}\big)^{*}
            \big\|_{\mathcal{L}_{1}(H_{n})}
            \big]
            \diff{t}.
      \end{split}
      \end{equation*}
      By \eqref{eq:.operator.Gamma.two}, \eqref{eq:kol.solu.SECOND},
      \eqref{eq:semigroup.smooth.two} and \eqref{eq:A.and.Q.n}, it follows that
      \begin{equation}\label{eq:J.three.outcome}
      \begin{split}
            |J_{3}^{m}|
      \leq&
            \frac{1}{2}
            \int_{t_{m}}^{t_{m+1}}
                \E\big[
                  \big\|
                      (-A_{n})^{\frac{1-\beta}{2}}
                      D^{2}v^{n}(T-t,\tilde{Y}^{n}(t))
                      (-A_{n})^{\frac{1+\beta}{2}-\epsilon}
                  \big\|_{\mathcal{L}(H_{n})}
      \\&\cdot
            \big\|
                (-A_{n})^{-(\beta-\epsilon)}
                \big(I - E_{n}(t-t_{m})\big)
            \big\|_{\mathcal{L}(H_{n})}
            \big\|
                (-A_{n})^{\frac{\beta-1}{2}}
                  P_{n}Q^{\frac{1}{2}}
            \big\|_{\mathcal{L}_{2}(H,H_{n})}^{2}
            \big]
            \diff{t}.
%      \\\leq&
%            \frac{1}{2}
%            \int_{t_{m}}^{t_{m+1}}
%                \E\big[
%                         \underbrace{
%                         \big\|
%                                 (-A_{n})^{\frac{1-\beta}{2}}
%                                 D^{2}v^{n}(T-t,\tilde{Y}^{n}(t))
%                                 (-A_{n})^{\frac{1+\beta}{2}-\epsilon}
%                         \big\|_{\mathcal{L}(H_{n})}
%                         }_{\leq
%                         C\big(1+(T-t)^{-\eta}+(T-t)^{-(1-\epsilon)}\big)
%                         e^{-\tilde{c}(T-t)}}
%      \\&\cdot
%            \underbrace{
%            \big\|
%                (-A_{n})^{-(\beta-\epsilon)}
%                \big(I - E_{n}(t-t_{m})\big)
%                \big\|_{\mathcal{L}(H_{n})}
%            }_{\leq C(t-t_{m})^{\beta-\epsilon}
%               \leq C{\tau}^{\beta-\epsilon}}
%            \underbrace{
%            \big\|
%                (-A_{n})^{\frac{\beta-1}{2}}
%                  P_{n}Q^{\frac{1}{2}}
%            \big\|_{\mathcal{L}_{2}(H,H_{n})}^{2}
%            }_{\leq C}
%            \big]
%            \diff{t}
%      \end{split}
%      \end{equation}
%      An immediate consequence of
%      gives that
%      \begin{equation}
%
%      \begin{split}
%            |J_{3}^{m}|
      \\\leq&
%            \frac{1}{2}
%            \int_{t_{m}}^{t_{m+1}}
%                \E\big[
%                         \underbrace{
%                         \big\|
%                                 (-A_{n})^{\frac{1-\beta}{2}}
%                                 D^{2}v^{n}(T-t,\tilde{Y}^{n}(t))
%                                 (-A_{n})^{\frac{1+\beta}{2}-\epsilon}
%                         \big\|_{\mathcal{L}(H_{n})}
%                         }_{\leq
%                         C\big(1+(T-t)^{-\eta}+(T-t)^{-(1-\epsilon)}\big)
%                         e^{-\tilde{c}(T-t)}}
%      \\&\cdot
%            \underbrace{
%            \big\|
%                (-A_{n})^{-(\beta-\epsilon)}
%                \big(I - E_{n}(t-t_{m})\big)
%                \big\|_{\mathcal{L}(H_{n})}
%            }_{\leq C(t-t_{m})^{\beta-\epsilon}
%               \leq C{\tau}^{\beta-\epsilon}}
%            \underbrace{
%            \big\|
%                (-A_{n})^{\frac{\beta-1}{2}}
%                  P_{n}Q^{\frac{1}{2}}
%            \big\|_{\mathcal{L}_{2}(H,H_{n})}^{2}
%            }_{\leq C}
%            \big]
%            \diff{t}
%      \\\leq&
            C{\tau}^{\beta-\epsilon}
            \int_{t_{m}}^{t_{m+1}}
                \big(1+(T-t)^{-\eta}+(T-t)^{-(1-\epsilon)}\big)
                e^{-\tilde{c}(T-t)}
            \diff{t}.
      \end{split}
      \end{equation}
      With regard to $J_{4}^{m}$, similarly to $J_{3}^{m}$, we can get
      \begin{equation}
            \label{eq:J.four.outcome}
      \begin{split}
            |J_{4}^{m}|
%      :=&
%            \Big|
%            \frac{1}{2}
%            \int_{t_{m}}^{t_{m+1}}
%                \E\big[
%                         \tr
%                         \big\{
%                                  D^{2}v^{n}(T-t,\tilde{Y}^{n}(t))
%                                  E_{n}(t-t_{m})
%                                  \big(P_{n}Q^{\frac{1}{2}}\big)
%                                  \big((E_{n}(t-t_{m})-I)
%                                  (P_{n}Q^{\frac{1}{2}})\big)^{*}
%                         \big\}
%                  \big]
%            \diff{t}
%            \Big|
      \leq&
            C{\tau}^{\beta-\epsilon}
            \int_{t_{m}}^{t_{m+1}}
                \big(1+(T-t)^{-\eta}+(T-t)^{-(1-\epsilon)}\big)
                e^{-\tilde{c}(T-t)}
            \diff{t}.
      \end{split}
      \end{equation}
      Inserting \eqref{eq:J.one.outcome}, \eqref{eq:J.two.outcome},
      \eqref{eq:J.three.outcome} and \eqref{eq:J.four.outcome}
      into \eqref{eq:telescoping} and using \eqref{eq:Gamma}
%      the following fact
%       \begin{equation*}
%       \begin{split}
%             &
%             \int_{0}^{T}
%                 (T-t)^{-\alpha}
%                 e^{-\tilde{c}(T-t)}
%             \diff{t}
%%             =
%%             -\tilde{c}^{-(1-\alpha)}
%%             \int_{0}^{T}
%%                 (\tilde{c}(T-t))^{-\alpha}
%%                 e^{-\tilde{c}(T-t)}
%%             \diff{(\tilde{c}(T-t))}
%%             \\\xlongequal{s:=\tilde{c}(T-t)}&
%%             -\tilde{c}^{-(1-\alpha)}
%%             \int_{\tilde{c}T}^{0}
%%                 s^{-\alpha}e^{-s}
%%             \diff{s}
%%             =
%%             \tilde{c}^{-(1-\alpha)}
%%             \int_{0}^{\tilde{c}T}
%%                 s^{-\alpha}e^{-s}
%%             \diff{s}
%             \leq
%%             \tilde{c}^{-(1-\alpha)}
%%             \int_{0}^{\infty}
%%                 s^{(1-\alpha)-1}e^{-s}
%%             \diff{s}
%%             \\&=
%             \tilde{c}^{-(1-\alpha)}
%              \Gamma((1-\alpha))
%             <\infty
%%            (~~~1-\alpha>0)
%      \end{split}
%      \end{equation*}
%      for all $\alpha < 1$
      yields the required conclusion.
\end{proof}

\subsection{Error of invariant measures for the space-time full discretization}
            \label{sec:order.full}

\begin{thm}\label{th:order.measure.temporal}
       Suppose that Assumptions \ref{ass:main.ass} and \ref{ass:main.ass2} hold. Let $\tau < \tau_0 \leq \frac{\lambda_1-L_F}{4L^2}$ and let $\nu^{n}$ and $\nu_{\tau}^{n}$ be the corresponding unique invariant measure of $\{ X^{n}(t) \}_{t \geq 0}$ and $\{Y_{m}^{n}\}_{m \in \N}$, respectively. Then for any %$T > 0$,  $M \in \N$ and
       $\Phi \in C_{b}^{2}(H,\R)$ there exists $C>0$ independent of $n, \tau$ such that
      \begin{equation}\label{eq:order.measure.temporal}
            \Big|
            \int_{H_{n}} \Phi(y) \,\nu^{n}(\dif{y})
            -
            \int_{H_{n}} \Phi(y) \,\nu_{\tau}^{n}(\dif{y})
            \Big|
            \leq
            C{\tau}^{\beta-\epsilon}.
      \end{equation}
\end{thm}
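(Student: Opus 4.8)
The plan is to mimic the proof of Theorem \ref{th:order.measure.spatial}, replacing the spatial weak error by the time-independent temporal weak error of Theorem \ref{th:temporal.I}. First I would invoke the ergodicity of $\{X^{n}(t)\}_{t\geq0}$ from Theorem \ref{th:ergodic.two}: since the $\tau$-skeleton Markov chain $\{X^{n}(m\tau)\}_{m\in\N}$ shares the unique invariant measure $\nu^{n}$ and is itself ergodic, the discrete Ces\`aro average $\frac{1}{M}\sum_{m=0}^{M-1}\E[\Phi(X^{n}(m\tau))]$ converges in $L^{2}(H,\nu^{n})$ to $\int_{H_{n}}\Phi(y)\,\nu^{n}(\dif{y})$ as $M\to\infty$. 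In parallel, Theorem \ref{th:ergodic.three} guarantees that $\{Y_{m}^{n}\}_{m\in\N}$ is ergodic with unique invariant measure $\nu_{\tau}^{n}$, so that $\frac{1}{M}\sum_{m=0}^{M-1}\E[\Phi(Y_{m}^{n})]\to\int_{H_{n}}\Phi(y)\,\nu_{\tau}^{n}(\dif{y})$.

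Next I would subtract the two Ces\`aro averages and pass to the limit, which yields
\begin{equation*}
\Big|\int_{H_{n}}\Phi(y)\,\nu^{n}(\dif{y})-\int_{H_{n}}\Phi(y)\,\nu_{\tau}^{n}(\dif{y})\Big|
\leq
\limsup_{M\to\infty}\frac{1}{M}\sum_{m=0}^{M-1}\big|\E[\Phi(X^{n}(t_{m}))]-\E[\Phi(Y_{m}^{n})]\big|,
\end{equation*}
with $t_{m}=m\tau$. I would then apply Theorem \ref{th:temporal.I} termwise: for each $m$ the bound $|\E[\Phi(X^{n}(t_{m}))]-\E[\Phi(Y_{m}^{n})]|\leq C\tau^{\beta-\epsilon}$ holds with $C$ independent of $m$ (equivalently of the final time $t_{m}=m\tau$) and of $n$. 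Hence every Ces\`aro average is bounded by $C\tau^{\beta-\epsilon}$, and so is the $\limsup$, giving \eqref{eq:order.measure.temporal}.

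The step needing the most care is justifying that the $\tau$-skeleton of the continuous-time ergodic process $\{X^{n}(t)\}_{t\geq0}$ is again ergodic with invariant measure $\nu^{n}$, so that the continuous ergodic limit may be replaced by a discrete Ces\`aro average matched to the grid $t_{m}=m\tau$; this is standard but should be noted, e.g. by observing that $\nu^{n}$ is invariant for the transition kernel at time $\tau$ and that the strong Feller and irreducibility properties of $\{X^{n}(t)\}_{t\geq0}$ established in Theorem \ref{th:ergodic.two} carry over to the skeleton chain. The genuinely essential ingredient, however, is the time-uniformity of the weak error constant in Theorem \ref{th:temporal.I}: because $C$ does not grow with $T=M\tau$, the average $\frac{1}{M}\sum_{m=0}^{M-1}C\tau^{\beta-\epsilon}$ stays equal to $C\tau^{\beta-\epsilon}$ in the limit $M\to\infty$. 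Were the weak error merely of the form $C(T)\tau^{\beta-\epsilon}$ with $C(T)$ unbounded, as in the finite-horizon analysis of \cite{wang2016weak}, this Ces\`aro argument would break down, which is precisely why Theorem \ref{th:temporal.I} was established with a time-independent constant.
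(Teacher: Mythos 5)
Your proposal is correct, and it shares the paper's overall architecture (ergodicity of both $\{X^{n}(t)\}_{t\geq0}$ and $\{Y_{m}^{n}\}_{m\in\N}$ combined with the time-uniform weak error of Theorem \ref{th:temporal.I}), but it resolves the one genuinely delicate point differently. The paper keeps the \emph{continuous-time} ergodic limit \eqref{eq:ergodic.two} for $X^{n}$ and compares $\frac{1}{M\tau}\int_{0}^{M\tau}\E[\Phi(X^{n}(t))]\,\dif{t}$ with the discrete average of $\E[\Phi(Y_{m}^{n})]$; this produces, besides the $C\tau^{\beta-\epsilon}$ term from \eqref{eq:temporal.order.I}, an extra term $K_{1}$ measuring the intra-step variation $|\E[\Phi(X^{n}(t))]-\E[\Phi(X^{n}(t_{m}))]|$ for $t\in[t_{m},t_{m+1}]$, which the paper then shows to vanish in the Ces\`aro limit by writing the difference through the Kolmogorov equation \eqref{eq:kol} and invoking the regularity bounds \eqref{eq:kol.solu.FIRST}--\eqref{eq:kol.solu.SECOND}, \eqref{eq:F.derivative.one}, \eqref{eq:A.and.Q.n} and $X_{0}\in\dot{H}^{\beta}$. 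You instead replace the continuous ergodic limit by the ergodic limit of the $\tau$-skeleton $\{X^{n}(m\tau)\}_{m\in\N}$, so the two Ces\`aro averages are already aligned on the grid and no analogue of $K_{1}$ appears; the price is the auxiliary claim, not stated in the paper, that the skeleton is ergodic with the same invariant measure $\nu^{n}$. That claim does hold and is standard: $\nu^{n}$ is invariant for the transition operator at time $\tau$, and the strong Feller property and irreducibility established for every positive time in the proof of Theorem \ref{th:ergodic.two} give uniqueness and ergodicity of the skeleton via Doob's theorem (see \cite{da2006introduction}), or alternatively via the discrete-chain framework of \cite{mattingly2002ergodicity} already used for $\{Y_{m}^{n}\}$ in Theorem \ref{th:full.ergodic}; note only that, exactly as in the paper's own use of \eqref{eq:ergodic.two} and \eqref{eq:ergodic.three} at the specific initial datum $X_{0}^{n}$, one should invoke the pointwise (Doob-type) version of the limit rather than merely the $L^{2}(H,\nu^{n})$ statement. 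Your approach buys a shorter proof that avoids the Kolmogorov-equation computation for $K_{1}$ (and the attendant use of Proposition \ref{pro:regularityDvntx} at the initial point), while the paper's buys independence from any skeleton-chain ergodicity statement; both hinge, as you correctly emphasize, on the $T$-independence of the constant in \eqref{eq:temporal.order.I}.
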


\begin{proof}
      Theorem \ref{th:ergodic.three} and the definition of ergodicity imply
      \begin{equation}\label{eq:ergodic.three}
            \lim\limits_{M\to\infty}
            \frac{1}{M}
            \sum\limits_{m=0}^{M-1}
            \E\big[\Phi(Y_{m}^{n})\big]
            =
            \int_{H_{n}} \Phi(y) \,\nu_{\tau}^{n}(\dif{y}),
            \quad
            \forall\, \Phi \in C_{b}^{2}(H,\R),
      \end{equation}
      which in combination with \eqref{eq:ergodic.two} and \eqref{eq:temporal.order.I} results in
      \begin{equation*}%\label{eq:temporal.measure.one}
      \begin{split}
            \Big|
            \int_{H_{n}} \Phi(y)& \,\nu^{n}(\dif{y})
            -
            \int_{H_{n}} \Phi(y) \,\nu_{\tau}^{n}(\dif{y})
            \Big|
%      \\=&
%            \Big|
%            \lim\limits_{T\to\infty}
%            \frac{1}{T}
%            \int_{0}^{T} \E\big[\Phi(X^{n}(t))\big] \diff{t}
%            -
%            \lim\limits_{M\to\infty}
%            \frac{1}{M}
%            \sum\limits_{m=0}^{M-1}
%            \E\big[\Phi(Y_{m}^{n})\big]
%            \Big|
%      \\=&
%            \Bigg|
%            \lim\limits_{\substack{M\to\infty\\T=M\tau}}
%            \frac{1}{T}
%            \sum\limits_{m=0}^{M-1}
%            \int_{t_{m}}^{t_{m+1}} \E\big[\Phi(X^{n}(t))\big] \diff{t}
%            -
%            \lim\limits_{M\to\infty}
%            \frac{1}{M\tau} \cdot \tau
%            \sum\limits_{m=0}^{M-1}
%            \E\big[\Phi(Y_{m}^{n})\big]
%            \Bigg|
%      \\=&
%            \Bigg|
%            \lim\limits_{\substack{M\to\infty\\T=M\tau}}
%            \frac{1}{T}
%            \sum\limits_{m=0}^{M-1}
%            \int_{t_{m}}^{t_{m+1}}
%                \E\big[\Phi(X^{n}(t))\big]
%            \diff{t}
%            -
%            \lim\limits_{M\to\infty}
%            \frac{1}{T}
%            \sum\limits_{m=0}^{M-1}
%            \int_{t_{m}}^{t_{m+1}}
%                \E\big[\Phi(Y_{m}^{n})\big]
%            \diff{t}
%            \Bigg|
%      \\=&
%            \Bigg|
%            \lim\limits_{\substack{M\to\infty\\T=M\tau}}
%            \frac{1}{T}
%            \sum\limits_{m=0}^{M-1}
%            \int_{t_{m}}^{t_{m+1}}
%                \E\big[\Phi(X^{n}(t))\big]
%                -
%                \E\big[\Phi(Y_{m}^{n})\big]
%            \diff{t}
%            \Bigg|
      \leq
            %\lim\limits_{\substack{M\to\infty\\T=M\tau}}
            \lim\limits_{M\to\infty}\frac{1}{M\tau}
            \sum\limits_{m=0}^{M-1}
            \int_{t_{m}}^{t_{m+1}}
            \big|
                    \E[\Phi(X^{n}(t))]
                    -
                    \E[\Phi(Y_{m}^{n})]
            \big|
            \diff{t}
      \\\leq&
%            \underbrace{
            %\lim\limits_{\substack{M\to\infty\\T=M\tau}}\frac{1}{T}
            \lim\limits_{M\to\infty}\frac{1}{M\tau}
            \sum\limits_{m=0}^{M-1}
            \int_{t_{m}}^{t_{m+1}}
            \big|
                    \E[\Phi(X^{n}(t))]
                    -
                    \E[\Phi(X^{n}(t_{m}))]
            \big|
            \diff{t}
%            }_{=:K_{1}}
      +
            C{\tau}^{\beta-\epsilon}
      :=
           K_{1} + C{\tau}^{\beta-\epsilon}.
      \end{split}
      \end{equation*}
      Now it remains to treat $K_{1}$.
      Using \eqref{eq:kol.solu}--\eqref{eq:kol}, we can show that for any $t \in [t_{m},t_{m+1}]$,
      \begin{equation*}
      \begin{split}
            &
            \E\big[\Phi(X^{n}(t))\big]
            -
            \E\big[\Phi(X^{n}(t_{m}))\big]
%                 ~~~\because
%                    %\label{eq:kol.solu}
%                    v^{n}(t,x)
%                    =
%                    \E \big[\Phi(X^{n}(t,x))\big],
%                    \quad
%                    t \in [0,T],~x \in H_{n}
      =
            v^{n}(t,X_{0}^{n})-v^{n}(t_{m},X_{0}^{n})
      =
            \int_{t_{m}}^{t}
                \frac{\partial{v^{n}(s,X_{0}^{n})}}{\partial{s}}
            \diff{s}
      \\=&
            \int_{t_{m}}^{t}
                \big\langle
                Dv^{n}(s,X_{0}^{n})
                ,
                A_{n}X_{0}^{n}+P_{n}F(X_{0}^{n})
                \big\rangle
                +
                \frac{1}{2}
                \tr\big\{
                   D^{2}v^{n}(s,X_{0}^{n})
                   (P_{n}Q^{\frac{1}{2}})
                   (P_{n}Q^{\frac{1}{2}})^{*}
                   \big\}
            \diff{s}
      \\=&
            -\int_{t_{m}}^{t}
                 \big\langle
                    (-A_{n})^{1-\frac{\beta}{2}}Dv^{n}(s,X_{0}^{n})
                    ,
                    (-A_{n})^{\frac{\beta}{2}}X_{0}^{n}
                 \big\rangle
             \diff{s}
            +
            \int_{t_{m}}^{t}
                \big\langle
                    Dv^{n}(s,X_{0}^{n}) , P_{n}F(X_{0}^{n})
                \big\rangle
            \diff{s}
            \\&+
            \frac{1}{2}
            \int_{t_{m}}^{t}
                \tr\big\{
                   (-A_{n})^{1-\beta}
                   D^{2}v^{n}(s,X_{0}^{n})
                   \big(
                       (-A_{n})^{\frac{\beta-1}{2}}P_{n}Q^{\frac{1}{2}}
                   \big)
                   \big(
                   (-A_{n})^{\frac{\beta-1}{2}}P_{n}Q^{\frac{1}{2}}
                   \big)^{*}
                   \big\}
            \diff{s}.
      \end{split}
      \end{equation*}
      By \eqref{eq:kol.solu.FIRST}--\eqref{eq:kol.solu.SECOND},
      \eqref{eq:F.derivative.one}, \eqref{eq:A.and.Q.n} and $X_{0} \in \dot{H}^{\beta}$,
      %$X_{0} \in \dot{H}^{1} \subset \dot{H}^{\beta}$,
      we have
      \begin{equation*}
      \begin{split}
            &
            \big|
            \E[\Phi(X^{n}(t))]
            -
            \E[\Phi(X^{n}(t_{m}))]
            \big|
%      \\\leq&
%            \int_{t_{m}}^{t}
%                \big|
%                    (-A_{n})^{1-\frac{\beta}{2}}Dv^{n}(s,X_{0}^{n})
%                    \cdot
%                    (-A_{n})^{\frac{\beta}{2}}X_{0}^{n}
%                \big|
%            \diff{s}
%            +
%            \int_{t_{m}}^{t}
%                \big|
%                    Dv^{n}(s,X_{0}^{n}) \cdot P_{n}F(X_{0}^{n})
%                \big|
%            \diff{s}
%            \\&+
%            \frac{1}{2}
%            \int_{t_{m}}^{t}
%                \big|
%                \tr\big[
%                   (-A_{n})^{1-\beta}
%                   D^{2}v^{n}(s,X_{0}^{n})
%                   \cdot
%                   \big(
%                       (-A_{n})^{\frac{\beta-1}{2}}P_{n}Q^{\frac{1}{2}})
%                   \big)
%                   \big(
%                   (-A_{n})^{\frac{\beta-1}{2}}P_{n}Q^{\frac{1}{2}})^{*}
%                   \big)
%                   \big]
%                \big|
%            \diff{s}
      \\\leq&
            \int_{t_{m}}^{t}
                \|(-A_{n})^{1-\frac{\beta}{2}}Dv^{n}(s,X_{0}^{n})\|
                \|(-A_{n})^{\frac{\beta}{2}}X_{0}^{n}\|
            \diff{s}
            +
            \int_{t_{m}}^{t}
                \| Dv^{n}(s,X_{0}^{n}) \| \| P_{n}F(X_{0}^{n})\|
            \diff{s}
            \\&+
            \frac{1}{2}
            \int_{t_{m}}^{t}
                \|(-A_{n})^{1-\beta}D^{2}v^{n}(s,X_{0}^{n})\|_{\mathcal{L}(H_{n})}
                \|
                 (-A_{n})^{\frac{\beta-1}{2}}P_{n}Q^{\frac{1}{2}}
                \|_{\mathcal{L}_{2}(H,H_{n})}^{2}
            \diff{s}
      \\\leq&
            C\|(-A_{n})^{\frac{\beta}{2}}X_{0}^{n}\|
            \int_{t_{m}}^{t}
                \big( 1+s^{\frac{\beta}{2}-1} \big)e^{-\tilde{c}s}
            \diff{s}
            +
            CL(1+\|X_{0}^{n}\|) \int_{t_{m}}^{t}   e^{-\tilde{c}s}  \diff{s}
            \\&+
            C\big\|
                 (-A_{n})^{\frac{\beta-1}{2}}P_{n}Q^{\frac{1}{2}}
             \big\|_{\mathcal{L}_{2}(H,H_{n})}^{2}
             \int_{t_{m}}^{t}
                 \big(1+s^{-\eta}+s^{\beta-1}\big) e^{-\tilde{c}s}
             \diff{s}
%                    \\&\because
%                       x \in \dot{H}^{2\beta}
%      \\\leq&
%            C\int_{t_{m}}^{t}
%                 (1+s^{-(1-\beta)})e^{-\tilde{c}s}
%             \diff{s}
%            +
%            C\int_{t_{m}}^{t}
%                 e^{-\tilde{c}s}
%             \diff{s}
%            +
%            C\int_{t_{m}}^{t}
%                 (1+s^{-\eta}+s^{-(1-\beta)})
%                 e^{-\tilde{c}s}
%             \diff{s}
      \\\leq&
            C\int_{t_{m}}^{t_{m+1}}
             \big(1+s^{-\eta}+s^{\frac{\beta}{2}-1}+s^{\beta-1}\big) e^{-\tilde{c}s}
             \diff{s}.
      \end{split}
      \end{equation*}
      With this and \eqref{eq:Gamma}, we can easily get $K_{1}=0$ and hence complete the proof.
\end{proof}

\begin{rek}\label{re:II}
  Bearing Remark \ref{re:I} in mind and specializing Theorem \ref{th:order.measure.temporal} to the space-time white noise case with $\beta<\frac{1}{2}$ yields that the convergence order between $\nu^n$ and $\nu_{\tau}^n$ is $\frac{1}{2}-\epsilon$ for arbitrarily small $\epsilon > 0$, which coincides with that in \cite{brehier2014approximation} for the linear implicit Euler scheme. Further applying this theorem to the trace class noise case with $\beta=1$ gives an order $1-\epsilon$ with arbitrarily small $\epsilon > 0$ for the convergence rate between $\nu^n$ and $\nu_{\tau}^n$ in space dimension $d = 1$.
\end{rek}

As a direct consequence of Theorems \ref{th:order.measure.spatial} and \ref{th:order.measure.temporal}, we have
\begin{coy}\label{Corollary:full-discrete}
Suppose that Assumptions \ref{ass:main.ass} and \ref{ass:main.ass2} hold. Let $\tau < \tau_0 \leq \frac{\lambda_1-L_F}{4L^2}$
and let $\nu$ and $\nu_{\tau}^{n}$ be the corresponding unique invariant measure of $\{X(t)\}_{t \geq 0}$ and
$\{Y_{m}^{n}\}_{m \in \N}$, respectively. Then for any $\Phi \in C_{b}^{2}(H,\R)$
there exists $C>0$ independent of $n, \tau $ such that
\begin{equation}\label{eq:order.measure.space-time}
            \Big|
            \int_{H} \Phi(y) \,\nu(\dif{y})
            -
            \int_{H_{n}} \Phi(y) \,\nu_{\tau}^{n}(\dif{y})
            \Big|
            \leq
            C
            (
            \lambda_{n}^{-\beta+\epsilon}
            +
            {\tau}^{\beta-\epsilon}
            ).
      \end{equation}
\end{coy}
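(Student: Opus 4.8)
The plan is to obtain this bound by inserting the spatial semi-discrete invariant measure $\nu^{n}$ as an intermediate object and splitting the total error by the triangle inequality. Concretely, for any $\Phi \in C_{b}^{2}(H,\R)$ I would write
\begin{equation*}
\begin{split}
\Big| \int_{H} \Phi(y) \,\nu(\dif{y}) - \int_{H_{n}} \Phi(y) \,\nu_{\tau}^{n}(\dif{y}) \Big|
\leq{}&
\Big| \int_{H} \Phi(y) \,\nu(\dif{y}) - \int_{H_{n}} \Phi(y) \,\nu^{n}(\dif{y}) \Big|
\\&+
\Big| \int_{H_{n}} \Phi(y) \,\nu^{n}(\dif{y}) - \int_{H_{n}} \Phi(y) \,\nu_{\tau}^{n}(\dif{y}) \Big|,
\end{split}
\end{equation*}
which makes sense because Theorems \ref{th:ergodic.two} and \ref{th:ergodic.three} guarantee that $\nu^{n}$ and $\nu_{\tau}^{n}$ exist and are unique under the standing assumptions and the stepsize restriction $\tau < \tau_{0} \leq \frac{\lambda_{1}-L_{F}}{4L^{2}}$ imposed in the corollary.

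Next I would bound the two terms on the right-hand side using the two error estimates already established. For the first term I would invoke Theorem \ref{th:order.measure.spatial}, which yields $C\lambda_{n}^{-\beta+\epsilon}$ with a constant $C$ that is independent of $n$ (the auxiliary time $T$ having already been sent to infinity in the proof of that theorem). For the second term I would invoke Theorem \ref{th:order.measure.temporal}, valid precisely under the stepsize condition assumed here, which yields $C\tau^{\beta-\epsilon}$ with a constant independent of $n$ and $\tau$. Adding the two estimates and absorbing the two constants into a single one produces the claimed inequality with a constant $C = C(X_{0},\Phi,L_{F},L,\ldots)$ independent of both $n$ and $\tau$.

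There is no genuine analytic obstacle left at this stage: the substantive work—the time-independent weak error analysis for the spatial and temporal discretizations (Theorems \ref{th:spatial} and \ref{th:temporal.I}) and the ergodicity of the two numerical schemes—has already been carried out. The only minor points to be careful about are that the arbitrarily small parameter $\epsilon>0$ can be taken the same in both estimates (immediate, since each theorem holds for every $\epsilon>0$), that the admissible test-function class $C_{b}^{2}(H,\R)$ coincides in both statements, and that the constants are genuinely independent of the discretization parameters as asserted in Theorems \ref{th:order.measure.spatial} and \ref{th:order.measure.temporal}. With these observations the corollary follows directly.
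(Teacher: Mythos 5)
Your proposal is correct and coincides with the paper's argument: the corollary is stated there precisely as a direct consequence of Theorems \ref{th:order.measure.spatial} and \ref{th:order.measure.temporal}, obtained by inserting $\nu^{n}$ and applying the triangle inequality exactly as you do. The observations about taking the same $\epsilon$, the common test-function class, and the uniformity of the constants are the only points worth noting, and you handle them appropriately.
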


\section{Numerical experiments}
\label{sect:numer-exp}

In this section, some numerical experiments are performed to illustrate the previous findings.
We consider an example from \cite[Example~3.2]{wang2016weak} as follows
      \begin{equation}
            \label{eq:example}
      \left\{
      \begin{array}{ll}
            \hspace{-0.5em}
            \frac{\partial{u}}{\partial{t}}
            =
            \frac{\partial^{2}{u}}{\partial{x^{2}}}
            +
            1 + u + \sin(u)
            +
            {\dot{W}^{Q}},
            &
            t > 0,~x \in (0,1),
            \\
            \hspace{-0.5em}
            u(0,x) = \sqrt{2}\sin(\pi{x}),
            &
            x \in (0,1),
            \\
            \hspace{-0.5em}
            u(t,0) = u(t,1) =0,
            &
            t > 0.
      \end{array}
      \right.
      \end{equation}
In order to fulfill \eqref{eq:A.and.Q} and \eqref{eq:W.Q.t}, we take $ q_{i} = 1, i \in \N$, $\beta < \frac{1}{2}$ for the space-time white noise case ($Q = I$) and $q_{i} = i^{-1.005}, i \in \N$, $\beta = 1$ for the trace class noise case ($\tr(Q) < \infty$).
Then one can easily show that all conditions in Assumptions \ref{ass:main.ass} and \ref{ass:main.ass2} are satisfied in this setting.
We also remark that all the expectations are approximated by computing averages over 100 samples and the exact solutions to \eqref{eq:example} are identified with the numerical ones using a large $n = 2^{10}$ as reference for the spatial test and a small $\tau = 2^{-15}$ as reference for the temporal test.

%Concerning \eqref{eq:A.and.Q} and \eqref{eq:F.derivative.one.delta}, we consider two possible cases:
%\begin{itemize}
%      \item
%      Case~I: when the driven noise is the space-time white noise
%              ($Q = I$), the assumption \eqref{eq:A.and.Q} is fulfilled
%              with $d =1, \beta < \frac{1}{2}$ and
%              \eqref{eq:F.derivative.one.delta} is satisfied with
%              $\delta = 1$
%              (see~\cite[Example~5.1]{wang2015exponential}).
%      \item
%      Case~II: when the driven noise is of trace class
%               ($\tr(Q) < \infty$), the assumption \eqref{eq:A.and.Q}
%               is fulfilled with $d \in \{1,2,3\}, \beta = 1$
%               and \eqref{eq:F.derivative.one.delta} is satisfied with
%               $\delta \in [1,2) \cap (\frac{d}{2},2)$
%               (see~\cite[Example~3.2]{wang2016weak}).
%\end{itemize}

%According to the above numerical results, we expect that the numerical approximations obtained by applying the exponential Euler scheme to these two examples are ergodic and that the weak errors are independent of time and that the convergence orders of these errors for cases $I$ and $II$ are $\frac{1}{2}-\epsilon$ and $1-\epsilon$ with arbitrarily small $\epsilon>0$, respectively.

By ergodicity, we know that the temporal averages $\frac{1}{M+1}\sum_{m=0}^{M}\E[\Phi(Y_{m}^{n})]$ should be a constant for all initial values in the whole space and may vary for different test functions $\Phi \in C_{b}^{2}(H,\R)$. These facts are numerically verified by Table \ref{averageinitial} with three different initial values $u_0^1,u_0^2,u_0^3$ and Table \ref{averagetest} with three different test functions $\Phi_1,\Phi_2,\Phi_3$. Additionally, both the spatial and temporal weak errors listed in Table \ref{tab:weakerrors}  show that
these errors are independent of time $T$.

Next we test the weak convergence orders with $u_{0}(x) = \sqrt{2}\sin(\pi{x}), x \in (0,1)$ being the initial value.
%Since no explicit expression of the exact solution to \eqref{eq:example} is available, we take $\tau = 2^{-20}, n= 2^{-i},i=1,2,\ldots,7$ and $n = 2^{10}$ as reference for the spatial test, and take $n= 100, \tau = 2^{-j}, j= 5,6,\ldots,12$ and $\tau = 2^{-15}$ as reference for the temporal test, respectively.
To this end, we take $\tau = 2^{-20}, n= 2^{-i},i=1,2,\ldots,7$ for the spatial test and $n= 100, \tau = 2^{-j}, j= 5,6,\ldots,12$ for the temporal test.
We mention that we choose $\Phi(y) = \exp(-|y|^{2}), y \in \R^n$ to be the test function and set the final time $T = 20$, which is large enough to ensure that the equilibrium is reached based on Tables \ref{averageinitial} and  \ref{averagetest}.
From Figure \ref{fig:error.order}, one can observe that, the slopes of the error lines and the reference lines match well, indicating that the convergence order is $1-\epsilon$ in space and $\frac{1}{2}-\epsilon$ in time for the space-time white noise case and $2-\epsilon$ in space and $1-\epsilon$ in time for the trace class noise case with arbitrarily small $\epsilon>0$.

Finally we fix $n = 100$ and also compare weak errors of the exponential Euler scheme with those of the existing linear implicit Euler scheme in\cite{brehier2014approximation,brehier2017approximation}. From Table \ref{tab:temporalweakerrors}, we can see that the exponential Euler scheme is always considerably more accurate than the linear implicit Euler scheme.
%
%\vspace{-6em}
%
\newcommand{\bb}[1]{\raisebox{-3.5ex}[0pt][0pt]{\shortstack{#1}}}
\begin{center}
\begin{table}%[htbp]
\centering
\begin{threeparttable}
  \caption{The temporal averages
  %$\frac{1}{M+1}\sum_{m=0}^{M}\E\big[\Phi(Y_{m}^{n})\big]$
  for different initial values}\label{averageinitial}
\begin{tabular}{p{0.5cm}<{\centering}
  |  p{1.5cm}<{\centering}  p{1.5cm}<{\centering}  p{1.5cm}<{\centering}
  |  p{1.5cm}<{\centering}  p{1.5cm}<{\centering}  p{1.5cm}<{\centering} }\hline
  \multicolumn{7}{c}
  {\parbox[c][9mm]{0pt}{}
  $
  \frac{1}{M+1}\sum_{m=0}^{M}\E[\Phi(Y_{m}^{n})],
   n = 100, M = \tfrac{T}{\tau}, \tau = 2^{-6},
   \Phi(y) = e^{-|y|^{2}}, y \in \R^n
  $
  }\\\hline
  %\bb{$T$}
  \raisebox{-4ex}[0pt][0pt]{\shortstack{$T$}}
  &
  \multicolumn{6}{c}
  {  \parbox[c][6mm]{0pt}{}
     $u_{0}^{1}(x) = 0$,
     $u_{0}^{2}(x) = \sqrt{2}\sin(\pi{x})$,
     $u_{0}^{3}(x) = \sum_{i=1}^{\infty}\sin(i\pi{x})/i$
  }
  \\\cline{2-7}
  & \multicolumn{3}{c|}{\parbox[c][6mm]{0pt}{} $Q = I$}
  & \multicolumn{3}{c}{\parbox[c][6mm]{0pt}{} $\tr(Q) < \infty$}
  \\\cline{2-7}
  & \multicolumn{1}{p{1.5cm}<{\centering}}{$u_{0}^{1}$}
  & \multicolumn{1}{p{1.5cm}<{\centering}}{$u_{0}^{2}$}
  & \multicolumn{1}{p{1.5cm}<{\centering}|}{$u_{0}^{3}$}
  & \multicolumn{1}{p{1.5cm}<{\centering}}{$u_{0}^{1}$}
  & \multicolumn{1}{p{1.5cm}<{\centering}}{$u_{0}^{2}$}
  & \multicolumn{1}{p{1.5cm}<{\centering}}{$u_{0}^{3}$}
  \\\hline
  $10$  &0.93451 &0.93095 &0.93187 &0.93828 &0.93471 &0.93563 \\
  $20$  &0.93553 &0.93375 &0.93421 &0.93932 &0.93753 &0.93799 \\
  $50$  &0.93495 &0.93424 &0.93442 &0.93875 &0.93803 &0.93821 \\
  $100$ &0.93506 &0.93471 &0.93480 &0.93885 &0.93850 &0.93859 \\
  $200$ &0.93482 &0.93465 &0.93469 &0.93862 &0.93844 &0.93848 \\
  $500$ &0.93523 &0.93516 &0.93518 &0.93902 &0.93895 &0.93897 \\
  \hline
\end{tabular}
\end{threeparttable}
\end{table}
\end{center}
\vspace{-3em}
\begin{center}
\begin{table}%[htbp]
\centering
\begin{threeparttable}
  \caption{The temporal averages
  %$\frac{1}{M+1}\sum_{m=0}^{M}\E\big[\Phi(Y_{m}^{n})\big]$
  for different test functions}\label{averagetest}
\begin{tabular}{p{0.5cm}<{\centering}
  |  p{1.5cm}<{\centering}  p{1.5cm}<{\centering}  p{1.5cm}<{\centering}
  |  p{1.5cm}<{\centering}  p{1.5cm}<{\centering}  p{1.5cm}<{\centering} }\hline
  \multicolumn{7}{c}
  {\parbox[c][9mm]{0pt}{}
  $
  \frac{1}{M+1}\sum_{m=0}^{M}\E[\Phi(Y_{m}^{n})],
   n = 100, M = \tfrac{T}{\tau}, \tau = 2^{-6},
   u_{0}(x) = \sqrt{2}\sin(\pi{x})
  $
  }\\\hline
  %\bb{$T$}
  \raisebox{-4ex}[0pt][0pt]{\shortstack{$T$}}
  &
  \multicolumn{6}{c}
  {  \parbox[c][6mm]{0pt}{}
     $\Phi_{1}(y) = e^{-|y|^{2}}$,
     $\Phi_{2}(y) = \sin(|y|)$,
     $\Phi_{3}(y) = \cos(|y|)$, $y \in \R^n$
  }
  \\\cline{2-7}
  & \multicolumn{3}{c|}{\parbox[c][6mm]{0pt}{} $Q = I$}
  & \multicolumn{3}{c}{\parbox[c][6mm]{0pt}{} $\tr(Q) < \infty$}
  \\\cline{2-7}
  & \multicolumn{1}{p{1.5cm}<{\centering}}{$\Phi_{1}$}
  & \multicolumn{1}{p{1.5cm}<{\centering}}{$\Phi_{2}$}
  & \multicolumn{1}{p{1.5cm}<{\centering}|}{$\Phi_{3}$}
  & \multicolumn{1}{p{1.5cm}<{\centering}}{$\Phi_{1}$}
  & \multicolumn{1}{p{1.5cm}<{\centering}}{$\Phi_{2}$}
  & \multicolumn{1}{p{1.5cm}<{\centering}}{$\Phi_{3}$}
  \\\hline
  $10$  &0.93059 &0.22854 &0.96248 &0.93471 &0.21735 &0.96448 \\
  $20$  &0.93375 &0.22412 &0.96426 &0.93753 &0.21281 &0.96627 \\
  $50$  &0.93424 &0.22338 &0.96463 &0.93803 &0.21207 &0.96664 \\
  $100$ &0.93471 &0.22260 &0.96494 &0.93850 &0.21128 &0.96695 \\
  $200$ &0.93465 &0.22269 &0.96492 &0.93844 &0.21138 &0.96693 \\
  $500$ &0.93516 &0.22176 &0.96521 &0.93859 &0.21043 &0.96723 \\
  \hline
\end{tabular}
\end{threeparttable}
\end{table}
\end{center}
\vspace{-4em}
\begin{center}
\begin{table}%[htbp]
\centering
\begin{threeparttable}
  \caption{The spatial weak errors and the temporal weak errors}\label{tab:weakerrors}
\begin{tabular}{p{0.5cm}<{\centering}
  |  p{2.75cm}<{\centering}  p{2.75cm}<{\centering}
  |  p{2.75cm}<{\centering}  p{2.75cm}<{\centering}  }\hline
  \multicolumn{5}{c}
  {\parbox[c][9mm]{0pt}{} %Table 3:
  $
   T = M\tau, u_{0}(x) = \sqrt{2}\sin(\pi{x}),
   \Phi(y) = e^{-|y|^{2}}, y \in \R^n
  $
  }\\\hline
  %\bb{$T$}
  \raisebox{-4ex}[0pt][0pt]{\shortstack{$T$}}
  & \multicolumn{2}{c|}{\parbox[c][6mm]{0pt}{}
                       $\E[\Phi(X(T))] - \E[\Phi(X^{n}(T))]$
                       }
  & \multicolumn{2}{c}{\parbox[c][6mm]{0pt}{}
                      $\E[\Phi(X^{n}(T))] - \E[\Phi(Y_{M}^{n})]$
                      }
  \\\cline{2-5}
  & \multicolumn{2}{c|}{\parbox[c][6mm]{0pt}{}
                       $n = 50, n_{\text{ref}} = 100, \tau = 2^{-5}$
                       }
  & \multicolumn{2}{c}{\parbox[c][6mm]{0pt}{}
                      $n= 100, \tau = 2^{-5}, \tau_{\text{ref}} = 2^{-8}$
                      }
  \\\cline{2-5}
  & \multicolumn{1}{p{2.75cm}<{\centering}}{$Q = I$}
  & \multicolumn{1}{p{2.75cm}<{\centering}|}{$\tr(Q) < \infty$}
  & \multicolumn{1}{p{2.75cm}<{\centering}}{$Q = I$}
  & \multicolumn{1}{p{2.75cm}<{\centering}}{$\tr(Q) < \infty$}
  \\\hline
  $10$  &0.0000043250 &0.0000043352 &0.0323918796 &0.0236526091 \\
  $20$  &0.0000032528 &0.0000032601 &0.0325885619 &0.0249577624 \\
  $50$  &0.0000030435 &0.0000030462 &0.0289957068 &0.0218740569 \\
  $100$ &0.0000025270 &0.0000025277 &0.0349042067 &0.0276291459 \\
  $200$ &0.0000035050 &0.0000035119 &0.0297790439 &0.0226310914 \\
  $500$ &0.0000029597 &0.0000029627 &0.0347515849 &0.0261828866 \\
  \hline
\end{tabular}
\end{threeparttable}
\end{table}
\end{center}
\begin{figure}[!htbp]
\begin{center}
      \subfigure[spatial weak convergence orders]{\includegraphics[width=0.45\textwidth]{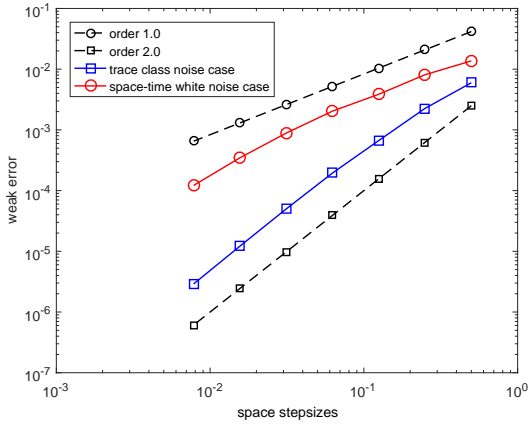}}
      %\hspace{0.05\textwidth}
      \subfigure[temporal weak convergence orders]{\includegraphics[width=0.45\textwidth]{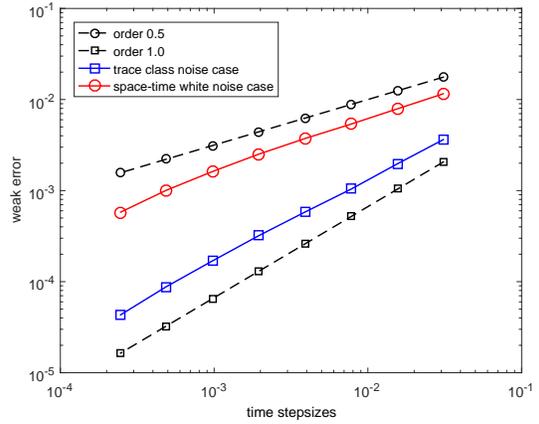}}
      \centering\caption{The weak convergence orders for trace class noise case and space-time white noise case}
      \label{fig:error.order}
\end{center}
\end{figure}
\begin{center}
\begin{table}%[htbp]
\centering
\begin{threeparttable}
  \caption{The temporal weak errors for exponential Euler (EE) scheme  and linear implicit Euler (LIE) scheme  with $n=100$, $\tau_{\text{ref}} = 2^{-15}$ and $T=20$}
\label{tab:temporalweakerrors}
\begin{tabular}{p{0.75cm}<{\centering}
  |  p{2.75cm}<{\centering}  p{2.75cm}<{\centering}
  |  p{2.75cm}<{\centering}  p{2.75cm}<{\centering}  }\hline
  \multicolumn{5}{c}
  {\parbox[c][9mm]{0pt}{} %Table 3:
  $\E[\Phi(X^{n}(T))] - \E[\Phi(Y_{M}^{n})],
   M = T/\tau, u_{0}(x) = \sqrt{2}\sin(\pi{x}),
   \Phi(y) = e^{-|y|^{2}}, y \in \R^n
  $
  }\\\hline
  %\bb{$T$}
  \raisebox{-1.5ex}[0pt][0pt]{\shortstack{$\tau$}}
  & \multicolumn{2}{c|}{\parbox[c][6mm]{0pt}{}
                       $Q = I$
                       }
  & \multicolumn{2}{c}{\parbox[c][6mm]{0pt}{}
                      $\tr(Q) < \infty$
                      }
  \\\cline{2-5}
  & \multicolumn{1}{p{2.75cm}<{\centering}}{EE}
  & \multicolumn{1}{p{2.75cm}<{\centering}|}{LIE}
  & \multicolumn{1}{p{2.75cm}<{\centering}}{EE}
  & \multicolumn{1}{p{2.75cm}<{\centering}}{LIE}
  \\\hline
  $2^{-5}$  &0.0458270571 &0.0723812355 &0.0270082268 &0.0559207587 \\
  $2^{-6}$  &0.0330721299 &0.0623557903 &0.0167672984 &0.0466522564 \\
  $2^{-7}$  &0.0227228795 &0.0482236813 &0.0094952116 &0.0334350770 \\
  $2^{-8}$  &0.0157090897 &0.0364990864 &0.0054211588 &0.0230548773 \\
  $2^{-9}$  &0.0106864989 &0.0260774425 &0.0030290821 &0.0145068919 \\
  $2^{-10}$ &0.0069976095 &0.0180278780 &0.0016330434 &0.0088346321 \\
%  $2^{-11}$ &0.0044973623 &0.0113633291 &0.0008598901 &0.0046737117 \\
%  $2^{-12}$ &0.0027247770 &0.0069201586 &0.0004274154 &0.0025819014 \\
  \hline
\end{tabular}
\end{threeparttable}
\end{table}
\end{center}

\noindent\textbf{Acknowledgments.} We are grateful to the two anonymous referees whose insightful
comments and valuable suggestions are crucial to the improvements of the manuscript.

\bibliographystyle{abbrv}
%\bibliography{CZHBibTeX}

\begin{thebibliography}{10}
	
	\bibitem{abdulle2014high}
	A.~Abdulle, G.~Vilmart, and K.~C. Zygalakis.
	\newblock High order numerical approximation of the invariant measure of
	ergodic {SDE}s.
	\newblock {\em SIAM J. Numer. Anal.}, 52(4):1600--1622, 2014.
	
	\bibitem{andersson2016duality}
	A.~Andersson, R.~Kruse, and S.~Larsson.
	\newblock Duality in refined {Sobolev--Malliavin} spaces and weak approximation
	of {SPDE}.
	\newblock {\em Stoch. PDE: Anal. Comp.}, 4(1):113--149, 2016.
	
	\bibitem{andersson2016weak}
	A.~Andersson and S.~Larsson.
	\newblock Weak convergence for a spatial approximation of the nonlinear
	stochastic heat equation.
	\newblock {\em Math. Comp.}, 85(299):1335--1358, 2016.
	
	\bibitem{anton2016full}
	R. Anton, D.~Cohen, S.~Larsson, and X. Wang.
	\newblock Full discretization of semilinear stochastic wave equations driven by multiplicative noise.
	\newblock {\em SIAM J. Numer. Anal.}, 54(2):1093--1119, 2016.
		
	\bibitem{anton2017fully}
	R.~Anton, D.~Cohen, and L.~Quer-Sardanyons.
	\newblock A fully discrete approximation of the one-dimensional stochastic heat
	equation.
	\newblock {\em IMA J. Numer. Anal.}, 40(1):247--284, 2020.
	
	\bibitem{borovkov1998ergodicity}
	A.~A. Borovkov.
	\newblock {\em Ergodicity and Stability of Stochastic Processes}.
	\newblock Wiley Series in Probability and Statistics. Wiley, Chichester, 1998.
	
	\bibitem{brehier2014approximation}
	C.-E. Br{\'e}hier.
	\newblock Approximation of the invariant measure with an {E}uler scheme for
	stochastic {PDE}s driven by space-time white noise.
	\newblock {\em Potential Anal.}, 40(1):1--40, 2014.
	
	\bibitem{brehier2018kolmogorov}
	C.-E. Br{\'e}hier and A.~Debussche.
	\newblock Kolmogorov equations and weak order analysis for {SPDE}s with
	nonlinear diffusion coefficient.
	\newblock {\em J. Math. Pure. Appl.}, 119:193--254, 2018.
	
	\bibitem{brehier2018weak}
	C.-E. Br\'{e}hier, M.~Hairer, and A.~M. Stuart.
	\newblock Weak error estimates for trajectories of {SPDEs} for {Spectral}
	{Galerkin} discretization.
	\newblock {\em J. Comput. Math.}, 36(2):159--182, 2018.
	
	\bibitem{brehier2017approximation}
	C.-E. Br{\'e}hier and M.~Kopec.
	\newblock Approximation of the invariant law of {SPDE}s: error analysis using a
	{P}oisson equation for a full-discretization scheme.
	\newblock {\em IMA J. Numer. Anal.}, 37:1375--1410, 2017.
	
	\bibitem{brehier2016high}
	C.-E. Br{\'e}hier and G.~Vilmart.
	\newblock High order integrator for sampling the invariant distribution of a
	class of parabolic stochastic {PDE}s with additive space-time noise.
	\newblock {\em SIAM J. Sci. Comput.}, 38(4):A2283--A2306, 2016.
	
	\bibitem{cerrai2001second}
	S.~Cerrai.
	\newblock {\em Second Order {PDE's} in Finite and Infinite Dimension: A
		Probabilistic Approach}, volume 1762 of {\em Lecture Notes in Mathematics}.
	\newblock Springer, Berlin, 2001.
	
	\bibitem{lu2019approximation}
	C.~Chen, J.~Hong, and Y.~Lu.
	\newblock Approximation of invariant measures for stochastic differential
	equations with piecewise continuous arguments via backward {Euler} method.
	\newblock {\em arXiv:1906.04096}, 2019.
	
	\bibitem{chen2017approximation}
	C.~Chen, J.~Hong, and X.~Wang.
	\newblock Approximation of invariant measure for damped stochastic nonlinear
	{S}chr{\"o}dinger equation via an ergodic numerical scheme.
	\newblock {\em Potential Anal.}, 46(2):323--367, 2017.
	
	\bibitem{chen2019ergodic}
	C.~Chen and D.~Liu.
	\newblock Ergodic approximation to chemical reaction system with delay.
	\newblock {\em SIAM J. Numer. Anal.}, 57(1):70--95, 2019.
	
	\bibitem{cohen2013trigonometric}
	D.~Cohen, S.~Larsson, and M. Sigg.
	\newblock A trigonometric method for the linear stochastic wave equation.
	\newblock {\em SIAM J. Numer. Anal.}, 51(1):204--222, 2013.
	
	\bibitem{cohen2016fully}
	D.~Cohen and L.~Quer-Sardanyons.
	\newblock A fully discrete approximation of the one-dimensional stochastic wave equation.
	\newblock {\em IMA J. Numer. Anal.}, 36(1):400--420, 2016.
			
	\bibitem{conus2019weak}
	D.~Conus, A.~Jentzen, and R.~Kurniawan.
	\newblock Weak convergence rates of spectral {G}alerkin approximations for
	{SPDE}s with nonlinear diffusion coefficients.
	\newblock {\em Ann. Appl. Probab.}, 29(2):653--716, 2019.
	
	\bibitem{cox2019weak}
	S. Cox, A. Jentzen, and F. Lindner.
	\newblock Weak convergence rates for temporal numerical approximations of stochastic wave equations with multiplicative noise.
	\newblock {\em arXiv:1901.05535}, 2019.
	
	\bibitem{da2006introduction}
	G.~Da~Prato.
	\newblock {\em An Introduction to Infinite-Dimensional Analysis}.
	\newblock Universitext, Springer, Berlin, 2006.
	
	\bibitem{da1992stochastic}
	G.~Da~Prato and J.~Zabczyk.
	\newblock {\em Stochastic Equations in Infinite Dimensions}, volume~44 of {\em
		Encyclopedia of Mathematics and its Applications}.
	\newblock Cambridge University Press, Cambridge, UK, 1992.
	
	\bibitem{da1996ergodicity}
	G.~Da~Prato and J.~Zabczyk.
	\newblock {\em Ergodicity for Infinite Dimensional Systems}, volume 229 of {\em
		London Mathematical Society Lecture Note Series}.
	\newblock Cambridge University Press, Cambridge, 1996.
	
	\bibitem{debussche2011weak}
	A.~Debussche.
	\newblock Weak approximation of stochastic partial differential equations: the
	nonlinear case.
	\newblock {\em Math. Comp.}, 80(273):89--117, 2011.
	
	\bibitem{hochbruck2010exponential}
	M.~Hochbruck and A.~Ostermann.
	\newblock Exponential integrators.
	\newblock {\em Acta Numerica}, 19:209--286, 2010.
	
	\bibitem{hong2019invariantbook}
	J.~Hong and X.~Wang.
	\newblock {\em Invariant Measures for Stochastic Nonlinear Schr\"{o}dinger
		Equations}, volume 2251 of {\em Lecture Notes in Mathematics}.
	\newblock Springer Nature Singapore Pte Ltd, Singapore, 2019.
	
	\bibitem{hong2017numerical}
	J.~Hong, X.~Wang, and L.~Zhang.
	\newblock Numerical analysis on ergodic limit of approximations for stochastic
	{NLS} equation via multi-symplectic scheme.
	\newblock {\em SIAM J. Numer. Anal.}, 55(1):305--327, 2017.
	
	\bibitem{jentzen2011efficient}
	A.~Jentzen, P.~Kloeden, and G.~Winkel.
	\newblock Efficient simulation of nonlinear parabolic {SPDE}s with additive
	noise.
	\newblock {\em Ann. Appl. Probab.}, 21(3):908--950, 2011.
	
	\bibitem{jentzen2009overcoming}
	A.~Jentzen and P.~E. Kloeden.
	\newblock Overcoming the order barrier in the numerical approximation of
	stochastic partial differential equations with additive space-time noise.
	\newblock {\em Proc. R. Soc. A}, 465(2102):649--667, 2009.
	
	\bibitem{jentzen2015weak}
	A.~Jentzen and R.~Kurniawan.
	\newblock Weak convergence rates for {E}uler-type approximations of semilinear
	stochastic evolution equations with nonlinear diffusion coefficients.
	\newblock {\em arXiv:1501.03539}, 2015.
	
	\bibitem{kloeden2011exponential}
	P.~E. Kloeden, G.~J. Lord, A.~Neuenkirch, and T.~Shardlow.
	\newblock The exponential integrator scheme for stochastic partial differential
	equations: {P}athwise error bounds.
	\newblock {\em J. Comput. Appl. Math.}, 235(5):1245--1260, 2011.
	
	\bibitem{kloeden1992numerical}
	P.~E. Kloeden and E.~Platen.
	\newblock {\em Numerical Solution of Stochastic Differential Equations},
	volume~23 of {\em Applications of Mathematics}.
	\newblock Springer, Berlin, 1992.
	
	\bibitem{kovacs2010finite}
	M.~Kov{\'a}cs, S.~Larsson, and F.~Saedpanah.
	\newblock Finite element approximation of the linear stochastic wave equation
	with additive noise.
	\newblock {\em SIAM J. Numer. Anal.}, 48(2):408--427, 2010.
	
	\bibitem{kruse2014strong}
	R.~Kruse.
	\newblock {\em Strong and Weak Approximation of Semilinear Stochastic Evolution
		Equations}, volume 2093 of {\em Lecture Notes in Mathematics}.
	\newblock Springer, Switzerland, 2014.
	
	\bibitem{lord2014introduction}
	G.~J. Lord, C.~E. Powell, and T.~Shardlow.
	\newblock {\em An Introduction to Computational Stochastic {PDE}s}, volume~50
	of {\em Cambridge Texts in Applied Mathematics}.
	\newblock Cambridge University Press, New York, 2014.
	
	\bibitem{lord2013stochastic}
	G.~J. Lord and A.~Tambue.
	\newblock Stochastic exponential integrators for the finite element
	discretization of {SPDE}s for multiplicative and additive noise.
	\newblock {\em IMA J. Numer. Anal.}, 33(2):515--543, 2013.
	
	\bibitem{mattingly2002ergodicity}
	J.~C. Mattingly, A.~M. Stuart, and D.~J. Higham.
	\newblock Ergodicity for {SDE}s and approximations: locally {L}ipschitz vector
	fields and degenerate noise.
	\newblock {\em Stochastic Process. Appl.}, 101(2):185--232, 2002.
	
	\bibitem{mattingly2010convergence}
	J.~C. Mattingly, A.~M. Stuart, and M.~V. Tretyakov.
	\newblock Convergence of numerical time-averaging and stationary measures via
	{P}oisson equations.
	\newblock {\em SIAM J. Numer. Anal.}, 48(2):552--577, 2010.
	
	\bibitem{milstein2007computing}
	G.~N. Milstein and M.~V. Tretyakov.
	\newblock Computing ergodic limits for {L}angevin equations.
	\newblock {\em Phys. D}, 229(1):81--95, 2007.
	
	\bibitem{prevot2007concise}
	C.~Pr{\'e}v{\^o}t and M.~R{\"o}ckner.
	\newblock {\em A Concise Course on Stochastic Partial Differential Equations},
	volume 1905 of {\em Lecture Notes in Mathematics}.
	\newblock Springer, Berlin, 2007.
	
	\bibitem{sell2002dynamics}
	G.~R. Sell and Y.~You.
	\newblock {\em Dynamics of Evolutionary Equations}, volume 143 of {\em Applied
		Mathematical Sciences}.
	\newblock Springer Science and Business Media, New York, 2002.
	
	\bibitem{talay1990second}
	D.~Talay.
	\newblock Second-order discretization schemes of stochastic differential
	systems for the computation of the invariant law.
	\newblock {\em Stochastics}, 29(1):13--36, 1990.
	
	\bibitem{talay2002stochastic}
	D.~Talay.
	\newblock Stochastic {H}amiltonian systems: exponential convergence to the
	invariant measure, and discretization by the implicit {E}uler scheme.
	\newblock {\em Markov Process. Related Fields}, 8(2):163--198, 2002.
	
	\bibitem{tambue2016weak}
	A.~Tambue and J.~M.~T. Ngnotchouye.
	\newblock Weak convergence for a stochastic exponential integrator and finite
	element discretization of stochastic partial differential equation with
	multiplicative \& additive noise.
	\newblock {\em Appl. Numer. Math.}, 108:57--86, 2016.
	
	\bibitem{wang2015exponential}
	X.~Wang.
	\newblock An exponential integrator scheme for time discretization of nonlinear
	stochastic wave equation.
	\newblock {\em J. Sci. Comput.}, 64(1):234--263, 2015.
	
	\bibitem{wang2016weak}
	X.~Wang.
	\newblock Weak error estimates of the exponential {E}uler scheme for
	semi-linear {SPDE}s without {M}alliavin calculus.
	\newblock {\em Discrete Contin. Dyn. Sys-Ser. A}, 36(1):481--497, 2016.
	
	\bibitem{wang2013weak}
	X.~Wang and S.~Gan.
	\newblock Weak convergence analysis of the linear implicit {E}uler method for
	semilinear stochastic partial differential equations with additive noise.
	\newblock {\em J. Math. Anal. Appl.}, 398(1):151--169, 2013.
	
\end{thebibliography}

\end{document}